\theoremstyle{plain}
\theoremstyle{plain}
\newtheorem{theorem}{Theorem}[section]
\newtheorem{proposition}[theorem]{Proposition}
\newtheorem{lemma}[theorem]{Lemma}
\newtheorem{corollary}[theorem]{Corollary}
\newtheorem{conjecture}[theorem]{Conjecture}
	\renewcommand{\vec}[1]{\mathbf{#1}}
\theoremstyle{remark}
\newtheorem{remark}[equation]{Remark}
\newtheorem{example}[theorem]{Example}
\theoremstyle{definition}
\newtheorem{definition}[theorem]{Definition}
\let\lim=\relax
\DeclareMathOperator*{\lim}{lim}
\newcommand{\ord}{\textup{ord}}
\newcommand{\diag}{\textup{diag}}
\newcommand{\tr}{\textup{tr}}
\renewcommand{\hat}{\widehat}
\newcommand{\Ki}{K_\infty}
\newcommand{\ki}{K_\infty}
\newcommand{\hQ}{\widehat{Q}}
\newcommand{\hR}{\widehat{R}}
\newcommand{\hl}{\widehat l}
\newcommand{\Kl}{\textup{Kl}}
\newcommand{\Sa}{\textup{Sa}}
\newcommand{\GL}{\textup{GL}}
\newcommand{\FF}{\mathbb{F}}
\newcommand{\ZZ}{\mathbb{Z}}
\newcommand{\NNz}{\mathbb{N}_{\geq 0}}
\newcommand{\CC}{\mathbb{C}}
\newcommand{\TT}{\mathbb{T}}
\newcommand{\PGL}{\textup{PGL}}
\newcommand{\PSL}{\textup{PSL}}
\newcommand{\sumstar}{\sideset{}{^*}\sum}
\newcommand{\cA}{\mathcal{A}}
\newcommand{\cO}{\mathcal{O}}
\newcommand{\cG}{\mathcal{G}}
\newcommand{\vol}{\textup{vol}}
\newcommand{\exc}{\textup{exc}}
\renewcommand{\contentsname}{}
\begin{document}
\title{Optimal Strong Approximation for quadrics over $\mathbb{F}_q[t]$}
\author{Naser Talebizadeh  Sardari and Masoud Zargar}

\address{Penn State department of Mathematics, McAllister Building, Pollock Rd, State College, PA 16802 USA}
\email{nzt5208@psu.edu}

\address{Department of Mathematics, University of Southern California, 3620 S. Vermont
Ave., Los Angeles, CA 90089-2532, U.S.A.}
\email{mzargar1225@gmail.com}

\renewcommand{\contentsname}{}

\begin{abstract}
Suppose $q$ is a fixed odd prime power, $F(\vec{x})$ is a non-degenerate quadratic form over $\mathbb{F}_q[t]$ of discriminant $\Delta$ in $d\geq 5$ variables $\vec{x}$, and $f,g\in\mathbb{F}_q[t]$, $\boldsymbol{\lambda}\in\mathbb{F}_q[t]^d$. We show that whenever $\deg f\geq (4+\varepsilon)\deg g+O_{\varepsilon,F}(1)$, $\gcd(\Delta^{\infty},fg)=O(1)$, and the necessary local conditions are satisfied, we have a solution $\vec{x}\in\mathbb{F}_q[t]^d$ to $F(\vec{x})=f$ such that $\vec{x}\equiv\boldsymbol{\lambda}\bmod g$. For $d=4$, we show that the same conclusion holds if we instead have $\deg f\geq (6+\varepsilon)\deg g+O_{\varepsilon,F}(1)$. This gives us a new proof (independent of the Ramanujan conjecture over function fields proved by Drinfeld) that the diameter of any $k$-regular Morgenstern Ramanujan graphs $G$ is at most $(2+\varepsilon)\log_{k-1}|G|+O_{\varepsilon}(1)$.  In contrast to the $d=4$ case, our result is optimal for $d\geq 5$. Our main new contributions are a stationary phase theorem over function fields for bounding oscillatory integrals, and a notion of anisotropic cones to circumvent isotropic phenomena in the function field setting.\footnotetext{\date\today}
\end{abstract}
\maketitle
\setcounter{tocdepth}{1}
\tableofcontents
\section{Introduction}\label{intro}
Let $\mathbb{F}_q[t]$ be the polynomial ring over the finite field $\mathbb{F}_q$ with $q$ elements, where $q$ is a fixed odd prime power. Suppose $F(\vec{x})$ is a non-degenerate quadratic form over $\mathbb{F}_q[t]$ in $d\geq 4$ variables $\vec{x}$, and $f\in\mathbb{F}_q[t]$. In this paper, we study the optimal strong approximation problem for the quadric $X_f$ given by the equation $F(\vec{x})=f$.
Precisely, given $g\in\mathbb{F}_q[t]$ and polynomials $\lambda_1,\hdots,\lambda_d\in\mathbb{F}_q[t]$, we study integral solutions $\vec{x}:=(x_1,\dots,x_d)\in \mathbb{F}_q[t]^d $ to the system
\begin{equation}\label{maineq}\begin{cases}F(\vec{x})=f,\\ \vec{x}\equiv\boldsymbol{\lambda}\bmod g,\end{cases}\end{equation}
where $\boldsymbol{\lambda}=(\lambda_1,\dots,\lambda_d)$ and $\vec{x}\equiv\boldsymbol{\lambda}\bmod g$ means $x_i\equiv \lambda_i\bmod g$ for every $1\leq i\leq d$.\\
\\
Throughout this paper, we let $\mathcal{O}:=\mathbb{F}_q[t]$, and $K:=\mathbb{F}_q(t)$. For $g\in\mathcal{O}$ and an irreducible $\varpi \in \mathcal{O}$, we write  $\ord_{\varpi}(g)$ for the highest power of $\varpi$ dividing $g$. Let $\mathcal{O}_{\varpi}$ be the completion of $\mathcal{O}$ with respect to the valuation $\ord_{\varpi}$. Furthermore,
\[\Ki=\mathbb{F}_q(\!(1/t)\!):=\left\{\sum_{i\leq N}a_it^i: \mbox{for $a_i\in \FF_q$ and some $N\in\ZZ$} \right\}
\] 
is the completion of $K$ (at $\infty$) with respect to the norm
\[|a/b|:=q^{\deg a-\deg b},\]
where for any $x\in\Ki$, we write $\deg x$ for the highest power of $t$ appearing in  the series expansion of $x$. In particular, $\deg(0)=-\infty.$ We equip $K_{\infty}^d$ with the norm $|\vec{x}|:=\max_i|x_i|$ and write $\deg\vec{x} :=\max_i \deg x_i$ for any $\vec{x}=(x_1,\hdots,x_d).$\\
\\
Given an $\mathcal{O}$-algebra $R$, we let $X_f(R):=\{\vec{x}\in R^d: F(\vec{x})=f\}$. We say all local conditions for the system~\eqref{maineq} are satisfied if $X_f(\Ki)\neq\emptyset,$ and for all $\varpi$, $F(\vec{x})=f$ has a
solution $\vec{x}_{\varpi} \in  \mathcal{O}_{\varpi}^d$ such that $\vec{x}_{\varpi}\equiv\boldsymbol{\lambda}\bmod \varpi^{\ord_{\varpi}(g)}.$ Note that this is a necessary condition for the existence of integral solutions to the system~\eqref{maineq}. We prove the following strong approximation result, a consequence of the more general Theorem~\ref{mainthm} discussed later.
\begin{theorem}[Strong approximation]\label{strong}
Suppose $q$ is a fixed odd prime power, $\varepsilon>0$, and  $F(\vec{x})$ is a fixed non-degenerate quadratic form over $\mathcal{O}$ in $d\geq 4$ variables with discriminant $\Delta$. Let $f, g\in\mathcal{O},$ where $\gcd(\Delta^{\infty},fg)=O(1),$ and  $\boldsymbol{\lambda}\in\mathcal{O}^d.$  Suppose that  all local conditions to the system~\eqref{maineq} are satisfied.  Then there is a constant $C_{\varepsilon,F}$ independent of $f$,  $g$ and $\boldsymbol{\lambda}$ such that the following hold:
\\
\begin{enumerate}[(i)]
\item if $d\geq 5$  and $\deg f \geq (4+\varepsilon)\deg g +C_{\varepsilon,F}$, then  there is a solution $\vec{x} \in \mathcal{O}^d $ to \eqref{maineq};
\\
\item if $d=4$ and $\deg f \geq (6+\varepsilon)\deg g +C_{\varepsilon,F}$, then  there is a solution $\vec{x} \in \mathcal{O}^4 $ to \eqref{maineq}.
\end{enumerate}
\end{theorem}
Throughout this paper, the notation $\gcd(P^{\infty},Q)=O(1)$ means that the irreducible divisors of $P$ appear with bounded multiplcity in $Q$, independently of $Q$.
\begin{remark}
 As we will see in Lemma~\ref{cone}, the local condition at $\infty$, that is $X_f(\Ki)\neq\emptyset$, is always satisfied for non-degenerate quadratic forms over $\Ki$ in $d\geq 4$ variables.
\end{remark}

\begin{conjecture}\label{conjectured4} 
Suppose all the conditions of Theorem~\ref{strong} are satisfied and $d=4$. If $\deg f \geq (4+\varepsilon)\deg g +O_{\varepsilon,F}(1)$, then  there is a solution $\vec{x} \in \mathcal{O}^4 $ to \eqref{maineq}. 
\end{conjecture}
 Another motivation for Theorem~\ref{strong} is related to bounding the diameters of Morgenstern Ramanujan graphs. We begin by defining Ramanujan graphs. Fix an integer $k\geq 3$, and let $G$ be a $k$-regular connected graph. Let $V_G$ be the set of vertices of $G$, and define the adjacency matrix of $G$  by
  \[
 A_G:=[a_{i,j}]_{ i,j\in V_G},
  \] where $a_{i,j}$ is the number of edges between $i$ and $j.$ Since $G$ is $k$-regular and connected, 
  $k$ is a simple eigenvalue of $A_G,$ and $-k$ is also a simple eigenvalue if $G$ is bipartite. All the the other eigenvalues are inside the open interval $(-k,k).$   Let $\lambda_G<k$ be the maximum of the absolute value of eigenvalues inside $(-k,k)$. By the Alon--Boppana Theorem~\cite{Lubotzky1988}, $\lambda_G\geq 2\sqrt{k-1}-o(1)$, where $o(1)$ goes to zero as $|G|\to \infty$.  We say that $G$ is a Ramanujan graph if  $\lambda_G \leq 2\sqrt{k-1}.$\\
\\
The first explicit construction of Ramanujan graphs is due to Lubotzky\textendash Phillips\textendash Sarnak~\cite{Lubotzky1988}, and independently by Margulis \cite{Margulis}. It is a Cayley graph of $\PGL_2(\mathbb{Z}/q\mathbb{Z})$ or $\PSL_2(\mathbb{Z}/q\mathbb{Z})$ with $p+1$ explicit generators for every prime $p$ and integer $q$.   The optimal spectral gap on the LPS construction is a consequence of the Ramanujan bound on the Fourier coefficients of the weight 2 holomorphic modular forms, which justifies their naming. 
We refer the reader to \cite[Chapter 3]{Peter}, where a complete history of the construction of Ramanujan graphs and other extremal properties of them are recorded. In particular, Lubotzky\textendash Phillips\textendash Sarnak proved that the diameter of every $k$-regular Ramanujan graph $G$ is bounded by $2\log_{k-1}|G|+O(1).$ This is still the best known upper bound on the diameter of a Ramanujan graph. It was conjectured that the diameter is bounded by $(1+\varepsilon)\log_{k-1}|G|$ as $|G|\to \infty;$ see \cite[Chapter 3]{Peter}. However, the first author proved that for some infinite families of LPS Ramanujan graphs the diameter is bigger than $4/3 \log_{k-1}|G|+O(1);$ see \cite{Sardari2018}. The first author has conjectured that the diameter of the LPS Ramanujan graphs is asymptotically $4/3\log_{k-1}|G|+o(\log_{k-1}|G|)$; the upper bound follows from an optimal strong approximation conjecture for integral quadratic forms in 4 variables; see \cite[Conjecture 1.3]{Naser2}. 
\\

Morgenstern generalized the LPS construction to prime power degrees~\cite{Morgenstern}. 
Theorem~\ref{strong} above can be used to give a new proof, independent of the Ramanujan conjecture over function fields~\cite{Drinfeld}, that the diameter of $k$-regular Morgenstern Ramanujan graphs $G$ are bounded above by $(2+\varepsilon)\log_{k-1}|G|+O_{\varepsilon}(1)$. We recall the construction of Ramanujan graphs due to Morgenstern when $q$ is odd. The quaternion algebra  for even $q$ can be found in Section 5 of Morgenstern's paper \cite{Morgenstern}. Consider the quaternion algebra
$$\mathcal{A}:=K\mathbf{1}+K\mathbf{i}+K\mathbf{j}+K\mathbf{ij},\ \mathbf{i}^2=\nu,\ \mathbf{j}^2=t-1,\ \mathbf{ij}=-\mathbf{ji},$$
where $\nu\in \mathbb{F}_q^*$ is not a square in $\mathbb{F}_q^*.$ Let
$$\mathcal{S}:=\mathcal{O}\mathbf{1}+\mathcal{O}\mathbf{i}+\mathcal{O}\mathbf{j}+\mathcal{O}\mathbf{ij}$$
be the integral part of $\mathcal{A}$. Given $\xi=a+b\mathbf{i}+c\mathbf{j}+d\mathbf{ij}$ in $\mathcal{A}$, its conjugate is defined as $\overline{\xi}:=a-b\mathbf{i}-c\mathbf{j}-d\mathbf{ij}$. Furthermore, we have the norm
$$N(\xi):=\xi\overline{\xi}=a^2-b^2\nu+(d^2\nu-c^2)(t-1).$$ 
Morgenstern's quadratic form over $\mathcal{O}$ is
\begin{equation}\label{Morgquad}
F_M(a,b,c,d):=a^2-b^2\nu+(d^2\nu-c^2)(t-1).
\end{equation}
Note that the quadratic equation $x^2-\nu y^2=1$ has exactly $q+1$ solution over $\mathbb{F}_q$~\cite[Lemma 4.2]{Morgenstern}.
Following \cite[Definition 4.3]{Morgenstern}, for every solution  $c_i^2-\nu d_i^2=1,$ where $1\leq i\leq q+1$, define  \textit{basic norm} $t$ element
\[
\xi_i:=1+c_i\mathbf{j}+d_i\mathbf{ij}.
\]
We may suppose that $(c_i,d_i)=-(c_{i+\frac{q+1}{2}}, d_{i+\frac{q+1}{2}})$ for $1\leq i\leq \frac{q+1}{2},$ which implies $\xi_i=\bar{\xi}_{i+\frac{q+1}{2}}.$ Let \(B:=\{\xi_i:  1\leq i\leq q+1  \}   \). From Lemmas 4.2 and 4.4 of Morgenstern's \cite{Morgenstern}, every  $x\in \mathcal{S}$ with $N(x)=t^n$ has a unique factorization
\begin{equation}\label{xfactor}x=ut^r\prod_{i=1}^m\theta_i ,\end{equation}
where $2r+m=n$, $N(u)=1$, $\theta_i\in B$, and $t$ does not divide $\prod_{i=1}^m\theta_i $. It follows that any $x=a+b\mathbf{i}+c\mathbf{j}+d\mathbf{ij} \in \mathcal{S}$ with $N(x)=t^n$  has $u=1$ if and only if $a-1,b\equiv 0\bmod{t-1}$.  Define
\[\Lambda(t-1):=\left\{x=a+b\mathbf{i}+c\mathbf{j}+d\mathbf{ij}\in\mathcal{S}:\begin{array}{c}a-1,b\equiv 0\bmod{t-1},\\ N(x)\ \textup{is a power of }t\end{array}\right\}/\sim,\]
where $\sim$ means we identify $x$ with $t^m x$ for every positive integer $m.$
From the above discussion, it follows that $\Lambda(t-1)$ is a free group generated by $\xi_1,\hdots,\xi_{\frac{q+1}{2}}$.  Let $g\in\mathbb{F}_q[t]$ be an irreducible polynomial prime to $t(t-1)$, and define 
\[\Lambda(g):=\left\{x=a+b\mathbf{i}+c\mathbf{j}+d\mathbf{ij}\in\Lambda(t-1):\begin{array}{c}b,c,d\equiv 0\bmod{g}\end{array}\right\}.\]
$\Lambda(g)$ is a normal subgroup of $\Lambda(t-1);$  define the quotient group $\Gamma_g:=\Lambda(t-1)/\Lambda(g).$ 
\\


The \textit{Cayley graph}  $C(\Gamma_g,B)$ of $\Gamma_g$ with respect to $B$  is the graph with the vertex set $\Gamma_g$ and with the edge set $\{\{v_1,v_2\} : v_1,v_2\in \Gamma_g, \text{ and } v_1=bv_2 \text{ for some } b\in B \}.$ Morgenstern proved that  $C(\Gamma_g,B)$ is  a $q+1$-regular Ramanujan graph~\cite[Theorem 4.11]{Morgenstern}.  See Morgenstern's paper \cite{Morgenstern} for a detailed discussion.\\

\begin{corollary}\label{graphoptimal} Suppose that $q$ is a fixed odd prime power, $g\in\mathcal{O}$ is an irreducible polynomial prime to $t(t-1)$, and the $q+1$-regular $G:=C(\Gamma_g,B)$ is as above. For any $\varepsilon >0,$
the diameter of $G$ is at most
\[(2+\varepsilon)\log_{q}|G|+O_{\varepsilon}(1),\]
where  $O_{\varepsilon}(1)$ is a constant independent of $g.$
\end{corollary}
\begin{proof}Since $G$ is a Cayley graph, it suffices to bound the distance of $1\in G$ to any other vertex $v\in G.$  Suppose that $v$ is represented by 
$a+b\mathbf{i}+c\mathbf{j}+d\mathbf{ij}\in\Lambda(t-1).$ This implies that 
\[F_M(a,b,c,d)=t^{\alpha}\]
for some $\alpha\geq 0,$ where $F_M$ is defined in~\eqref{Morgquad}. By~\eqref{xfactor}, this representative gives a path from $1$ to $v$ with length equal to the number of basic norm $1$ elements in the unique factorization. This is at most $\alpha$. 
\\

Suppose  that $\alpha\geq (6+\varepsilon)\deg ((t-1)g)+C_{\varepsilon,F_M}+2,$ where $C_{\varepsilon,F_M}$ is the constant appearing in part (ii) of Theorem~\ref{strong} for $F_M$. We  use Theorem~\ref{strong} to find a new representative of $v$ of  norm $t^{\alpha-2}$. Let  
\[
\boldsymbol{\lambda}\equiv t^{-1}(a,b,c,d) \bmod (t-1)g.
\]
We claim that for every prime ideal  $\varpi$ , $F_M(\vec{x})=t^{\alpha-2}$ has a
solution $\vec{x}_{\varpi} \in  \mathcal{O}_{\varpi}^4$ such that $\vec{x}_{\varpi}\equiv\boldsymbol{\lambda}\bmod \varpi^{\ord_{\varpi}((t-1)g)}.$ If $\varpi\neq (t)$, then $ \vec{x}:=t^{-1}(a,b,c,d)\in  \mathcal{O}_{\varpi}^4$ is a solution. It remains to check the local condition for $\varpi=(t).$ If $\alpha$ is even then $\vec{x}=(t^{\frac{\alpha-2}{2}},0,0,0)$ is a local solution. For odd $\alpha$, $\vec{x}=\xi_1t^{\frac{\alpha-3}{2}}$ is a local solution. Since all local conditions are satisfied, an application of Theorem~\ref{strong} gives that there is an integral solution $\vec{x}\in\mathcal{O}^4$ to $F_M(\vec{x})=t^{\alpha-2}$ such that $\vec{x}\equiv \boldsymbol{\lambda}\bmod (t-1)g$. $\vec{x}$ gives another representative of $v\in G$. By continuing this process, we may reduce $\alpha$ until we obtain that $v$ is of distance at most $(6+\varepsilon)\deg ((t-1)g)+C_{\varepsilon,F_M}$ from $1.$ Since $v$ was arbitrary, this concludes the proof if  $|G|\gg q^{3\deg g}$. We show this in what follows. Define the subgroup
\[\Lambda^+(g):=\left\{x=a+b\mathbf{i}+c\mathbf{j}+d\mathbf{ij}\in\mathcal{S}:\begin{array}{c}b,c,d\equiv 0\bmod{(t-1)g},\\ a\equiv t^k\bmod (t-1)g,\textup{ for some }k\geq 0\\ N(x)\ \textup{is a power of }t\end{array}\right\}/\sim\]
of $\Lambda(g)$ which has index $[\Lambda(g):\Lambda^+(g)]=O(q^2)$. Also define the finite group
\[\overline{\Lambda(t-1)}:=\left\{\overline{x}=\overline{a}+\overline{b}\mathbf{i}+\overline{c}\mathbf{j}+\overline{d}\mathbf{ij}\in\mathcal{S}/((t-1)g\mathcal{S}):\begin{array}{c}\overline{a}-1,\overline{b}\equiv 0\bmod{t-1},\\ N(\overline{x})\ \textup{is a power of }t\end{array}\right\}/\sim.\]
The natural group homomorphism
\[\mu:\Lambda(t-1)\rightarrow\overline{\Lambda(t-1)}\]
given by taking coordinates modulo $(t-1)g$ has $\Lambda^+(g)$ in its kernel. A similar application of Theorem~\ref{strong} implies that $\mu$ is surjective. It is easy to see that the size of the image of $\mu$ satisfies  $|\overline{\Lambda(t-1)}|\gg q^{3\deg g}.$ Hence,
\[
|G|=|\Lambda(t-1)/\Lambda(g)|=\frac{|\Lambda(t-1)/\Lambda^+(g)|}{[\Lambda(g):\Lambda^+(g)]}= \frac{|\overline{\Lambda(t-1)}|}{[\Lambda(g):\Lambda^+(g)]}\gg q^{3\deg g}.
\]

%
\end{proof}
Note that our proof is independent of the Ramanujan conjecture over function fields. Using the Ramanujan conjecture, one obtains the stronger statement that the diameter of $G$ is at most $2\log_q|G|+2$~\cite{Morgenstern}. Conjecture~\ref{conjectured4} implies that $(\frac{4}{3}+\varepsilon)\log_q|G|+O_{\varepsilon}(1)$ is an upper bound on the diameters of Morgenstern's Ramanujan graphs, following the proof of Corollary~\ref{graphoptimal}. In~\cite{SZ2019}, we showed that $\frac{4}{3}\log_q|G|-O(1)$ is a lower bound on the diameters of an infinite family of Morgenstern Ramanujan graphs.
\subsection{Method of proof}
Our method is based on a version of the circle method that is developed in the work of Heath-Brown over the integers~\cite{Brown}, and that was further developed in the first author's paper \cite{Naser2} to prove an optimal strong approximation result for quadratic forms over the integers. Browning and Vishe constructed a version of this circle method for function fields~\cite{Browning}. In this paper, we extend the circle method over function fields by proving a stationary phase theorem that is also of independent interest. The stationary phase theorem allows us to bound certain oscillatory integrals that appear in the circle method. 
\\

 A notion that is very important in this paper is that of an anisotropic cone. 
  Anisotropic cones are defined as follows. We use the notation $\hat{R}:=q^R$ for any given real number $R$.
\begin{definition}[Anisotropic cone]\label{def:aniso}
$\Omega\subset K_{\infty}^d$ is an \textit{anisotropic cone} with respect to the quadratic form $F(\vec{x})$ if there exist fixed positive integers $\omega$  and  $\omega^{\prime}$ such that: 
\begin{enumerate}
\item if $\vec{x}\in \Omega$ then $f\vec{x}\in \Omega$ for every $f\in K_\infty;$
\item \label{prop2} if $\vec{x}\in\Omega$ and $\vec{y} \in  K_\infty^d $ with $|\vec{y}|\leq |\vec{x}|/ \hat{\omega},$ then $\vec{x}+\vec{y} \in \Omega;$ and 
\item \label{prop3}  if $\vec{x}\in\Omega,$  $ \hat{\omega^{\prime}} |F(\vec{x})| \geq |\vec{x}|^2.$
\end{enumerate}
\end{definition}
\begin{example}\label{exampleaniso}
For the quadratic form $F(x_1,\hdots,x_d):=x_1^2+\dots+x_d^2$, $\Omega:=\{\vec{x}\in\Ki^d|\forall i<d,\ \deg x_i<\deg x_d\}$ is an anisotropic cone. We can take $\omega=1$ and $\omega'=0$.
\end{example}
It follows that there are only finitely many points of $X_f(\mathcal{O})$ inside an anisotropic cone. In Lemma~\ref{cone}, we show that given a quadratic form $F$ in $d\geq 4$ variables, there is an anisotropic cone $\Omega$ such that for any $f\in\mathcal{O}$, $\Omega\cap X_f(\Ki)\neq\emptyset$. This implies that the following is a generalization of Theorem~\ref{strong}.
\begin{theorem}
\label{mainthm}Suppose $q$ is a fixed odd prime power, $\varepsilon>0$, and  $F(\vec{x})$ is a fixed non-degenerate quadratic form over $\mathcal{O}$ in $d\geq 4$ variables with discriminant $\Delta$.  Let $f, g\in\mathcal{O},$ where $\gcd(\Delta^{\infty},fg)=O(1),$ and $\boldsymbol{\lambda}\in\mathcal{O}^d.$  Suppose that  all local conditions to the system~\eqref{maineq} are satisfied. Furthermore, let  $\Omega$ be an anisotropic cone such that $\Omega\cap X_f(\Ki)\neq\emptyset$.  Then there is a constant $C_{\varepsilon,F,\Omega}$ independent of $f,g,\boldsymbol{\lambda}$ such that the following hold:
\begin{enumerate}[(i)]
\item \label{mainthmpart1}if $d\geq 5$ and $\deg f \geq (4+\varepsilon)\deg g +C_{\varepsilon,F,\Omega}$, then  there is a solution $\vec{x} \in \Omega\cap\mathcal{O}^d $ to \eqref{maineq};
\\
\item if $d=4$ and $\deg f \geq (6+\varepsilon)\deg g +C_{\varepsilon,F,\Omega}$, then there is a solution $\vec{x} \in \Omega\cap\mathcal{O}^4 $ to \eqref{maineq}.
\end{enumerate}
\end{theorem}

In the following proposition, we prove that the condition $\deg f\geq 4\deg g-O_{F,\Omega}(1)$ in Theorem~\ref{mainthm} is necessary for every quadratic form in $d\geq 4$ variables.
\begin{proposition}\label{optimality} Given $F(\vec{x})$ in $d\geq 4$ variables, $\Omega,$ $g$ satisfying all the assumptions in Theorem~\ref{mainthm}, there exist $f$ and $\boldsymbol{\lambda}$ satisfying all the assumptions in Theorem~\ref{mainthm}, and 
\[
\deg f \geq 4\deg g - O_{F,\Omega}(1).
\]
such that~\eqref{maineq} does not have any solution $\vec{x}\in  \Omega\cap\mathcal{O}^d $. 
\end{proposition}
\begin{proof}
Suppose that $f\equiv F(\boldsymbol{\lambda})+gt^{\deg(g)-1} \bmod g^2.$  Suppose $\vec{x}\in  \Omega\cap\mathcal{O}^d $ is a solution to \eqref{maineq}. Then  $F(\vec{x})=f,$ and   $\vec{x}=g\vec{t}+ \boldsymbol{\lambda}$ for some $\vec{t}\in \mathcal{O}^d$. Write $F(\vec{x})= \vec{x}^{\intercal} A \vec{x}$ for some symmetric matrix $A$ with $\mathcal{O}$ coefficients. We have 
\[
(g\vec{t}+ \boldsymbol{\lambda})^{\intercal}A(g\vec{t}+ \boldsymbol{\lambda}) \equiv F(\boldsymbol{\lambda})+gt^{\deg(g)-1} \bmod g^2
\]
This implies 
\[
2\boldsymbol{\lambda}^{\intercal}A\vec{t}\equiv t^{\deg{g}-1} \bmod g.
\]
It follows that 
\[
\deg{\vec{t}} \geq \deg{g}-1-\deg{\boldsymbol{\lambda}^{\intercal}A}.
\]
Assume that $\boldsymbol{\lambda}$ has constant polynomial coordinates. Then,
\[
\deg\vec{x}\geq 2 \deg{g}- O_{F}(1).
\]
By (3) of definition~\ref{def:aniso}, we have 
\[
\deg{f}=\deg{F(\vec{x})} \geq 2\deg\vec{x}-O_{\Omega}(1)\geq 4 \deg{g}- O_{F,\Omega}(1).
\]
The conclusion follows.
\end{proof}
\begin{remark} 
For $d\geq 5$, part~\eqref{mainthmpart1} of Theorem~\ref{mainthm} combined with Proposition~\ref{optimality} imply that the constant $4$ in the condition $\deg f\geq (4+\varepsilon)\deg g+O_{\varepsilon,F,\Omega}(1)$ is optimal. For $d=4$, Conjecture~\ref{conjectured4} combined with Proposition~\ref{optimality} would imply that $\deg f\geq (4+\varepsilon)\deg g+O_{\varepsilon,F,\Omega}(1)$ is not only necessary, but also sufficient.
\end{remark}

In the following theorem, we state a version of our optimal strong approximation for the place at $\infty$. 
\begin{theorem}\label{infmainthm}
Suppose $q$ is a fixed odd prime power, $\varepsilon>0$, and  $F(\vec{x})=\vec{x}^{\intercal}A\vec{x}$ is a fixed non-degenerate quadratic form over $\mathcal{O}$ in $d\geq 4$ variables. Suppose $A=[a_{ij}(t)]_{ij}$. Let $f\in\mathcal{O}$ be such that $\gcd(f,t\Delta)=1$, and suppose the quadratic residue of $f$ is that of $a_{k\ell}$, an entry of $A$ with $\gcd(a_{k\ell},t)=1$ and of maximal degree among the entries. Suppose $\Omega$ is an anisotropic cone. If $\boldsymbol{\lambda}\in X_f(\Ki)\cap\Omega$ and $N\geq 0$ is an integer, then there is a constant $C_{\varepsilon,F,\Omega}$ independent of $f,A, \boldsymbol{\lambda}$ such that the following hold:
\begin{enumerate}[(i)]
\item \label{mainthmpart1}if $d\geq 5$ and $\deg f \geq (4+\varepsilon)N +C_{\varepsilon,F,\Omega}$, then  there is a solution $\vec{x} \in \mathcal{O}^d $ to $F(\vec{x})=f$ such that $\frac{|\vec{x}-\boldsymbol{\lambda}|}{|f|^{1/2}}< q^{-N}$;
\\
\item if $d=4$ and $\deg f \geq (6+\varepsilon)N+C_{\varepsilon,F,\Omega}$, then  there is a solution $\vec{x} \in \mathcal{O}^4 $ to $F(\vec{x})=f$ such that $\frac{|\vec{x}-\boldsymbol{\lambda}|}{|f|^{1/2}}< q^{-N}$.
\end{enumerate}
\end{theorem}

The lack of optimality for $d=4$ in our method, in contrast to when $d\geq 5$, appears in Proposition~\ref{prop:small}, where the triangle inequality along with the Weil bound are used. When $d=4$, the triangle inequality leads to some loss. We proved in~\cite{SZ2019} that the optimality for Morgenstern's quadratic form $F_M$ of~\eqref{Morgquad} reduces to a \textit{twisted} version of the Linnik\textendash Selberg conjecture over function fields (Conjecture 1.4 of \textit{loc.cit}) which we suspect is true.  Over function fields, the classical Linnik\textendash Selberg conjecture is true and is equivalent to the Ramanujan conjecture proved by Drinfeld~\cite{Drinfeld}. See the work of Cogdell and Piatetski-Shapiro \cite{CPS} for a proof of this. We expect that a generalization of Conjecture 1.4 of~\cite{SZ2019} holds for arbitrary quadratic forms in $d=4$ variables, leading to Conjecture~\ref{conjectured4}. Since the Ramanujan conjecture over $\mathbb{F}_q(t)$ is proved, in contrast to that over $\mathbb{Q}$, there is greater hope of proving such a result over function fields.
\subsection{Comparison with other  results}
Sarnak studied the distribution of integral points on the sphere $S^3$. Indeed, given $R>0$ such that $R^2\in \mathbb{Z}$, we let $C(R)$ denote the maximum volume  of any cap on  the $(d-1)$-dimensional sphere $S^{d-1}(R)$ of radius $R$ which contains no integral points. Sarnak defined  \cite{Sarnak2}
 the covering exponent of integral points on the sphere by:
 \begin{equation*}
 \begin{split}
 K_d&:=\limsup_{R \to \infty}\frac{\log \left(\# S^{d-1}(R)\cap \mathbb{Z}^d \right) }{\log\left( \text{vol } S^{d-1}(R)/C(R)\right)}.
 \end{split}
 \end{equation*}
 
In his letter \cite{Sarnak2} to  Aaronson and  Pollington, Sarnak showed that $4/3\leq K_4 \leq 2$. To show that $K_4\leq 2$, he appealed to the Ramanujan bound on the Fourier coefficients of weight $k$ modular forms, while the lower bound $4/3\leq K_4$ is a consequence of an elementary number theory argument. Furthermore, Sarnak states some open problems \cite[Page $24$]{Sarnak2}.  The first one is to show that  $K_4<2$ or  even that $K_4=4/3$.\\
\\
It follows from Theorem 1.8 and Corollary 1.9 of \cite{Naser2} that $K_d=2-\frac{2}{d-1}$ for $d\geq 5$ and $4/3\leq K_4 \leq 2$; see also \cite{Siegel} for bounds on the average covering exponent.  Browning--Kumaraswamy--Steiner~\cite{BKS} showed that $K_4 = 4/3$, subject to the validity of a twisted version of a conjecture of Linnik about cancellation in sums of Kloosterman sums; see also Remark 6.8 of \cite{Naser2}. 
\\

More generally, Ghosh, Gorodnick and Nevo studied the covering exponent of the orbits of a lattice subgroup $\Gamma$ in a connected Lie group $G$, acting on a suitable homogeneous spaces $G/H$; see \cite{GN,GGN, GGN1,GGN2}. They linked  the covering exponent $K_{\Gamma}$ of $\Gamma$ to the spectrum of $H$ in the automorphic representation on $L^2(\Gamma\backslash G).$ In particular, they showed that $K_{\Gamma} \leq 2$ if  the restriction of the unitary representation on $L^2(\Gamma\backslash G)$ to $H$ has tempered spherical spectrum as a representation of $H$; see~\cite[Theorem 3.5]{GGN2} and \cite[Theorem 3.3]{GGN1}.  This recovers  the above result of Sarnak for $d=4.$  For $d\geq 5$, by using the best bound on  the generalized Ramanujan conjecture for $SO_{d},$ they showed that    $1 \leq K_d\leq 4-4/(d-1) $ for odd $d$ and $1 \leq K_d\leq  4-16/(d+2)$ for even $d$~\cite[Page 12]{GGN}. They raised the question of improving  these bounds in \cite[Page 11]{GGN}. As pointed out above, the first author gave a definite answer to this question and showed that $K_d=2-\frac{2}{d-1}$ for $d\geq 5$ in~\cite{Naser2}. In this paper, we find the optimal covering exponent for the quadratic forms in $d\geq 5$ variables in the function field setting. 

\subsection{Outline of the paper}In Section~\ref{deltamethod}, we set up the circle method suitable for our strong approximation problem. In the circle method, there are two type of quantities that must be bounded: exponential sums and oscillatory integrals. In Section~\ref{boundsexponential}, we prove necessary bounds on exponential sums. In Section~\ref{analyticsection}, we develop and prove a stationary phase theorem over function fields. The stationary phase theorem is then used in Section~\ref{osil} to bound oscillatory integrals by relating them to Kloosterman and Sali\'e sums. In Section~\ref{mainboundsection}, Hensel's Lemma is used to estimate the main contribution in the circle method. Finally, in Section~\ref{finalproof}, the previous estimates are used to prove the main results.

\subsection{Summary of notations}\label{Summaryofnot}
We summarize here our notations and explain them further when they appear. 
\begin{itemize}
\item $\mathbb{F}_q$ is the finite field with $q$ elements, $q$ a fixed odd prime power;
\item $e_q(a)=\exp(2\pi i \tr(a)/p)$, where $\tr: \FF_q\rightarrow \FF_p$ denotes the trace map;
\item $\mathcal{O}:=\mathbb{F}_q[t];$ whenever we consider elements of $\mathcal{O}$ in our formulae, we implicitly assume that they are monic;
\item $K:=\mathbb{F}_q(t);$
\item for prime ideal $\varpi\subset\mathcal{O}$ and $g\in\mathcal{O}$, $\ord_{\varpi}(g)$ is the higher power of $\varpi$ dividing $g$;
\item $\mathcal{O}_{\varpi}$ is the completion of $\mathcal{O}$ with respect to the valuation $\ord_{\varpi}$;
\item $\Ki=\mathbb{F}_q(\!(1/t)\!):=\left\{\sum_{i\leq N}a_it^i: \mbox{$a_i\in \FF_q$ and $N\in\ZZ$} \right\};$
\item for $x\in \Ki$, $\deg x$ is the highest power of $t$ appearing in  the series expansion of $x$, with $\deg(0)=-\infty;$ 
\item for $\vec{x}=(x_1,\hdots,x_d)\in \Ki^d,$ $\deg\vec{x} :=\max_i \deg x_i;$ 
\item for $x\in \Ki$, $|x|:=q^{\deg x};$
\item $K_{\infty}^d$ is equipped  with the norm $|\vec{x}|:=\max_i|x_i|$;
\item $\mathcal{O}_{\infty}:=\{x\in\Ki:|x|\leq 1\}=\left\{\sum_{i\leq 0}a_it^i: \mbox{for $a_i\in \FF_q$} \right\};$
\item $\mathbb{T}:=\{x\in\Ki:|x|<1\}=\left\{\sum_{i\leq -1}a_it^i: \mbox{$a_i\in \FF_q$} \right\};$
\item for $Q\in\mathbb{Z}$, $\hat{Q}=q^Q;$
\item $\textup{M}_d(R)$ is the ring of square matrices of size $d$ with coefficients in $R$;
\item $\GL_d(R)$ is the ring of invertible square matrices of size $d$ with coefficients in $R$;
\item for $A=[a_{ij}]\in\textup{M}_d(\Ki)$, its norm is $|A|:=\max|a_{ij}|$;
\item for $A=[a_{ij}]\in\textup{M}_d(\Ki)$, its degree is $\deg A:=\max \deg a_{ij}$;
\item for $F$ a quadratic form in $d$ variables, $F(\vec{x})=\vec{x}^{\intercal}A\vec{x}$ for some symmetric matrix $A$ of size $d$; 
\item $\Delta=\det(A).$
\item For $f,g\in\mathcal{O}$, $\gcd(f^{\infty},g)=O(1)$ means that the irreducible divisors of $f$ appear with bounded multiplicity in $g$, independently of $g$.
\end{itemize}

\section{The circle method for small target}\label{deltamethod}
In this section, we define a weighted sum $N(w, \boldsymbol{\lambda})$  counting the number of integral solutions to \eqref{maineq}. We then use the circle  method to give an expression for $N(w, \boldsymbol{\lambda})$  in terms of exponential sums and oscillatory integrals. This is done by giving an expansion of the delta function using the decomposition of $\TT$ (that we shall define below) found in the paper \cite{Browning} of Browning and Vishe. 
\\

Consider  
$$
\TT:=\{\alpha\in K_\infty: |\alpha|<1\}=\left\{\sum_{i\leq -1}a_it^i: \mbox{$a_i\in \FF_q$}
\right\},
$$
and let
$$\mathcal{O}_{\infty}:=\{x\in\Ki:|x|\leq 1\}=\left\{\sum_{i\leq 0}a_it^i: \mbox{for $a_i\in \FF_q$} \right\}.$$
$\TT$  is the maximal ideal of the local ring $\mathcal{O}_{\infty},$ and  is a compact subset of $K_\infty.$ $\Ki$ is a locally compact abelian group, and so we equip it with the Haar measure $d\alpha$ normalized so that $\int_{\TT}d\alpha=1$ (as in Kubota~\cite[p.9]{kubota}). The space $S(\Ki^d)$ is the space of Schwarz--Bruhat functions on $\Ki^d$, that is, locally constant functions $f:\Ki^d\rightarrow\mathbb{C}$ of compact support.

\subsection{Characters}\label{s:add-characters}
Recall the notations from Subsection~\ref{Summaryofnot}. Let $p$ be the characteristic of $\mathbb{F}_q$. For $N\in\mathbb{Z}$, we write $\hat{N}:=q^N$. There is a non-trivial additive character $e_q:\FF_q\rightarrow \CC^*$ defined
for each $a\in \FF_q$ by taking 
$e_q(a)=\exp(2\pi i \tr(a)/p)$, where $\tr: \FF_q\rightarrow \FF_p$ denotes the
trace map.
This character induces a non-trivial (unitary) additive character $\psi:
K_\infty\rightarrow \CC^*$ by defining $\psi(\alpha)=e_q(a_{-1})$ for any 
$\alpha=\sum_{i\leq N}a_i t^i$ in $\Ki$. In particular it is clear that
$\psi|_\cO$ is trivial. 
More generally, given any $\gamma \in \Ki$, the map $\alpha\mapsto \psi(\alpha\gamma)$ is an additive
character on $\Ki$. We then have the following orthogonality property.
\begin{lemma}[Kubota, Lemma 7 of
\cite{kubota}]\label{lem:orthogsum}
$$
\sum_{\substack{b\in \cO\\ |b|<\hat N}}\psi(\gamma b)=\begin{cases}
\hat N, & \mbox{if $|(\!(\gamma)\!)|<\hat N^{-1}$,}\\
0, & \mbox{otherwise},
\end{cases}
$$
for any $\gamma \in \ki$ and any integer $N\geq 0$, where $(\!(\gamma)\!)$ is the part of $\gamma$ with all degrees negative.
\end{lemma}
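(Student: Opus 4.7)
The plan is to reduce the sum to a product of independent sums over $\mathbb{F}_q$ using the explicit description of $\psi$, then apply the standard fact that a non-trivial character of $\mathbb{F}_q$ has sum zero. This is the function field analogue of the orthogonality $\sum_{n=0}^{N-1}e(\alpha n)=N$ if $\alpha\in\mathbb{Z}$ and $0$ otherwise.

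First, I would write $b\in\mathcal{O}$ with $|b|<\hat{N}$ explicitly in coordinates: $b=\sum_{i=0}^{N-1}b_it^i$ with $b_i\in\mathbb{F}_q$. Similarly, expand $\gamma=\sum_{j\leq M}c_jt^j\in K_\infty$ as a Laurent series in $1/t$. Multiplying out, the coefficient of $t^{-1}$ in $\gamma b$ is $\sum_{i=0}^{N-1}b_ic_{-1-i}$. By the very definition of $\psi$ from Section~\ref{s:add-characters},
\[\psi(\gamma b)=e_q\!\left(\sum_{i=0}^{N-1}b_ic_{-1-i}\right)=\prod_{i=0}^{N-1}e_q(b_ic_{-1-i}).\]

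Next, I would interchange the sum over $b$ with this product. Since the sum over $b$ with $|b|<\hat{N}$ is precisely the sum over $(b_0,\ldots,b_{N-1})\in\mathbb{F}_q^N$, the expression factors as
\[\sum_{\substack{b\in\mathcal{O}\\|b|<\hat{N}}}\psi(\gamma b)=\prod_{i=0}^{N-1}\Bigl(\sum_{b_i\in\mathbb{F}_q}e_q(b_ic_{-1-i})\Bigr).\]
For each factor, the map $b_i\mapsto e_q(b_ic_{-1-i})$ is a character of the additive group $\mathbb{F}_q$; it is trivial exactly when $c_{-1-i}=0$, in which case the factor equals $q$, and otherwise the factor equals $0$ by standard character orthogonality on a finite abelian group (using non-triviality of $e_q$, which follows from $\operatorname{tr}:\mathbb{F}_q\to\mathbb{F}_p$ being surjective).

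Consequently, the whole product is $q^N=\hat{N}$ if and only if $c_{-1}=c_{-2}=\cdots=c_{-N}=0$, and is $0$ otherwise. Finally, I would translate this vanishing condition back in terms of $(\!(\gamma)\!)=\sum_{j\leq -1}c_jt^j$: the condition $c_{-1}=\cdots=c_{-N}=0$ says precisely that the highest-degree term of $(\!(\gamma)\!)$ lies at some $t^j$ with $j\leq -N-1$, i.e.\ $|(\!(\gamma)\!)|\leq q^{-N-1}<\hat{N}^{-1}$. This matches the stated dichotomy and completes the lemma.

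There is really no conceptual obstacle here; the only thing to be careful about is the index bookkeeping, namely that the coefficient of $t^{-1}$ in $\gamma b$ pairs $b_i$ (ranging over $0\leq i\leq N-1$) against $c_{-1-i}$, so that the vanishing condition picks out exactly the coefficients $c_{-1},\ldots,c_{-N}$ of $(\!(\gamma)\!)$.
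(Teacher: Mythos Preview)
Your proof is correct. The paper does not actually prove this lemma; it simply cites it as Lemma~7 of Kubota~\cite{kubota}, so there is no in-paper argument to compare against. Your coefficient-by-coefficient factorisation is the standard proof and all the bookkeeping is right: pairing $b_i$ with $c_{-1-i}$, using non-degeneracy of $e_q$ (via surjectivity of $\tr:\FF_q\to\FF_p$) to kill each factor with $c_{-1-i}\neq 0$, and translating $c_{-1}=\cdots=c_{-N}=0$ into $|(\!(\gamma)\!)|<\hat N^{-1}$.
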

We also have the following
\begin{lemma}[Kubota, Lemma 1(f) of \cite{kubota}]\label{lem:orthog}
Let $Y\in \ZZ$ and $\gamma\in  \ki$. Then 
$$
\int_{|\alpha|<\hat{Y}} \psi(\alpha \gamma) d \alpha=\begin{cases}
\hat Y, &\mbox{if $|\gamma|<\hat Y^{-1}$},\\
0, &\mbox{otherwise.}
\end{cases}
$$
\end{lemma}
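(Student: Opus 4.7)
The plan is to reduce the statement to the orthogonality relation for the additive character $\psi$, using the ultrametric structure of $\ki$ together with translation-invariance of Haar measure. First I would fix notation: the Haar measure on $\ki$ is normalized so that $\mu(\TT)=1$, and since the ball $\{\alpha\in\ki:|\alpha|<\hat Y\}$ equals $t^Y\TT$, the scaling property of Haar measure immediately gives $\mu(\{|\alpha|<\hat Y\})=|t^Y|=\hat Y$. This already disposes of the measure of the integration domain in the nontrivial case.

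Next, treat the case $|\gamma|<\hat Y^{-1}$. Here, for every $\alpha$ with $|\alpha|<\hat Y$, one has $|\alpha|\leq q^{Y-1}$ and $|\gamma|\leq q^{-Y-1}$, so the ultrametric inequality forces $|\alpha\gamma|\leq q^{-2}$. This means $\alpha\gamma=\sum_{i\leq -2}c_i t^i$, so the coefficient of $t^{-1}$ vanishes, and $\psi(\alpha\gamma)=e_q(0)=1$ identically on the integration domain. The integral then equals $\hat Y$ as claimed.

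For the complementary case $|\gamma|\geq\hat Y^{-1}$, the strategy is the classical character-orthogonality trick. Writing $\gamma=\sum_{i\leq N}a_it^i$ with $a_N\neq 0$, the assumption gives $N\geq -Y$. I would set $\alpha_0:=ct^{-N-1}$ for some $c\in\FF_q^\times$ to be chosen; then $|\alpha_0|=q^{-N-1}\leq q^{Y-1}<\hat Y$, so $\alpha_0$ lies in the integration domain, and $\alpha_0\gamma=ca_Nt^{-1}+(\textup{terms of degree}\leq -2)$, so $\psi(\alpha_0\gamma)=e_q(ca_N)$. Since $e_q$ is a nontrivial character of $\FF_q$ and $a_N\neq 0$, I can choose $c$ so that $e_q(ca_N)\neq 1$.

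The payoff is then immediate: the substitution $\alpha=\beta+\alpha_0$, combined with the fact that the ultrametric ball $\{|\alpha|<\hat Y\}$ is invariant under translation by any element of itself, gives
\[
I:=\int_{|\alpha|<\hat Y}\psi(\alpha\gamma)\,d\alpha=\int_{|\beta|<\hat Y}\psi((\beta+\alpha_0)\gamma)\,d\beta=\psi(\alpha_0\gamma)\cdot I,
\]
forcing $I=0$. I do not expect any real obstacle here; the only mildly delicate point is keeping track of the discrete values of $|\cdot|$ so that the strict and non-strict inequalities line up correctly, but this is bookkeeping rather than substance.
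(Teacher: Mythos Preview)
Your proof is correct. The paper does not supply its own argument for this lemma; it simply attributes the result to Kubota \cite{kubota}, Lemma~1(f), and uses it as a black box. Your direct character-orthogonality/translation argument is the standard one and fills in exactly what the citation omits.
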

In particular, if we set $Y=0$, then we obtain the following expression for the delta function on $\mathcal{O}$:
\[\delta(x)=\int_{\TT}\psi(\alpha x)d\alpha,\]
where 
\[
\delta(x)=\begin{cases} 1 &\text{ if } x=0,\\ 0 &\text{ otherwise.}  \end{cases}
\]
\subsection{The delta function}The idea now is to decompose $\TT$ into a disjoint union of balls (with no minor arcs) which is the analogue of Kloosterman's version of the circle method in this function field setting. This is done via the following lemma of Browning and Vishe \cite[Lemma~4.2]{Browning}.
\begin{lemma}\label{lem:dissection}
For any $Q>1$ we have a disjoint union 
\[
\TT=\bigsqcup_{\substack{r\in \cO\\ |r|\leq \hat Q\\
\text{$r$ monic}}}
\bigsqcup_{\substack{a\in \cO\\ |a|<|r|\\ (a,r)=1} }
\left\{\alpha\in \TT: |r\alpha-a|<\hat Q^{-1}\right\}.\]
\end{lemma}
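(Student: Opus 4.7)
The plan is to establish separately that the claimed family of balls is pairwise disjoint and that every element of $\TT$ belongs to one of them.

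For disjointness, suppose $\alpha\in\TT$ lies in two balls parameterized by pairs $(a_1,r_1)$ and $(a_2,r_2)$. Multiplying the strict inequalities $|r_i\alpha - a_i|<\hat Q^{-1}$ by $|r_{3-i}|$ and applying the ultrametric triangle inequality to
\[a_2 r_1 - a_1 r_2 = (r_1 r_2\alpha - a_1 r_2) - (r_1 r_2\alpha - a_2 r_1)\]
yields $|a_2 r_1 - a_1 r_2| < \hat Q^{-1}\max(|r_1|,|r_2|)\leq 1$. Since the left-hand side lies in $\cO$ it must vanish, so $a_1 r_2 = a_2 r_1$; combined with coprimality $(a_i,r_i)=1$ and monicity of the $r_i$, this forces $(a_1,r_1)=(a_2,r_2)$.

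For exhaustion, I would prove a Dirichlet-type approximation statement via pigeonhole. Fix $\alpha\in\TT$ and consider the $q^{Q+1}$ polynomials $b\in\cO$ with $|b|\leq\hat Q$. For each such $b$ write $b\alpha = c_b + \beta_b$ with $c_b\in\cO$ and $\beta_b\in\TT$. Partition $\TT$ into its $\hat Q$ cosets modulo the open subgroup $\{\beta\in\TT:|\beta|<\hat Q^{-1}\}$; since $q^{Q+1}>\hat Q$, pigeonhole produces distinct $b_1,b_2$ with $\beta_{b_1}$ and $\beta_{b_2}$ in the same coset. Setting $b=b_1-b_2$ and $c=c_{b_1}-c_{b_2}$ gives a nonzero $b\in\cO$ with $|b|\leq\hat Q$ and $|b\alpha - c|<\hat Q^{-1}$. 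After scaling both $b$ and $c$ by the inverse of the leading coefficient of $b$ to make $b$ monic, and then dividing both by $d:=(b,c)$, I obtain $(a,r)$ with $r$ monic, $(a,r)=1$, $|r|\leq\hat Q$, and $|r\alpha - a|<\hat Q^{-1}$.

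The only subtle point is verifying $|a|<|r|$ rather than merely $|a|\leq|r|$; this is exactly where the defining hypothesis $|\alpha|<1$ of $\TT$ is used. One has $|b\alpha|<|b|$ and $|b\alpha - c|<\hat Q^{-1}\leq 1\leq |b|$, so the ultrametric inequality gives $|c|\leq\max(|b\alpha|,|b\alpha - c|)<|b|$, and dividing by $|d|$ yields $|a|<|r|$. Without this strict inequality, $a$ could fall outside the canonical residue system $\{|a|<|r|\}$ and the partition would acquire duplicates. This is really the only non-routine step; once it is in place, disjointness and exhaustion together give the claimed partition of $\TT$.
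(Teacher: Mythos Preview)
Your proof is correct. The paper does not supply its own proof of this lemma but simply quotes it as \cite[Lemma~4.2]{Browning}; the argument there is precisely the standard Dirichlet-approximation-via-pigeonhole that you have written out, including the disjointness computation $|a_2r_1-a_1r_2|<1$ and the observation that $|\alpha|<1$ forces the strict inequality $|a|<|r|$. One cosmetic remark: your count of $q^{Q+1}$ polynomials with $|b|\leq\hat Q$ tacitly assumes $Q\in\ZZ$; for non-integer $Q$ the counts shift to $q^{\lfloor Q\rfloor+1}$ polynomials versus $q^{\lceil Q\rceil-1}$ cosets, but the inequality still holds and the argument goes through unchanged.
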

The following follows from Lemma \ref{lem:dissection}.
\begin{lemma}\label{delta} Let $Q\geq  1$ and $n\in \cO.$ 
We have 
\begin{equation}\label{deltaeq}
\delta(n)= \frac{1}{\hQ^{2}}
\sum_{\substack{
r\in \cO\\
|r|\leq \hat Q\\
\text{$r$ monic}
} }
\sumstar_{\substack{
|a|<|r|} }
\psi\left(\frac{an}{r}\right) h\left(\frac{r}{t^Q},\frac{n}{t^{2Q}}\right)
\end{equation}
where we henceforth put
\[
\sumstar_{\substack{
|a|<|r| }}
:=\sum_{\substack{
a\in \cO\\ 
|a|<|r|\\ (a,r)=1} }.
\]
and $h$ is only  defined for $x\neq 0$ as:
\[h(x,y)=\begin{cases}|x|^{-1} & \mbox{if $|y|<|x|$}\\ 0 & \mbox{otherwise.}\end{cases}\]
%
\end{lemma}
\begin{proof}
Indeed, using Lemma~\ref{lem:dissection}, we may rewrite the integral expression of the delta function as
\begin{eqnarray*}
\delta(x)&=&\int_{\TT}\psi(\alpha x)d\alpha\\
&=& \sum_{\substack{r\in \cO\\
|r|\leq \hat Q\\
\text{$r$ monic}
} }
\sumstar_{\substack{
|a|<|r|} }\int_{|r\alpha-a|<\hat Q^{-1}}\psi(\alpha x)d\alpha\\ &=& \sum_{\substack{r\in \cO\\
|r|\leq \hat Q\\
\text{$r$ monic}
} }
\sumstar_{\substack{
|a|<|r|} }\psi\left(\frac{ax}{r}\right)\int_{|\alpha|<|r|^{-1}\hat Q^{-1}}\psi(\alpha x)d\alpha,
\end{eqnarray*}
where the last equality follows from a linear change of variables. Note that if we define
\[
h(x,y):=|x|^{-1}\int_{\TT}\psi(y x^{-1} u) du,
\]
then
\begin{eqnarray*}
h\left(\frac{r}{t^Q},\frac{x}{t^{2Q}}\right)&=& \hat{Q}|r|^{-1}\int_{\TT}\psi\left(\frac{xu}{rt^Q}\right)du\\
&=&\hat{Q}^2\int_{|\alpha|<|r|^{-1}\hat{Q}^{-1}}\psi(\alpha x)d\alpha.
\end{eqnarray*}
The last statement follows from Lemma~\ref{lem:orthog}.
\end{proof}
\subsection{Smooth sum $N(w,\boldsymbol{\lambda})$}
As previously stated, we want to take a weight function $w\in S(K_{\infty}^d)$ and use it to define a weighted sum over all the solutions whose existence we want to show. We will denote such a sum by $N(w,\boldsymbol{\lambda})$, and then we will use the circle method to give a lower bound for this quantity. A positive lower bound would prove existence of the desired solutions.\\
\\

Let $w$ be a Schwarz-Bruhat weight function defined on $\Ki^d$. Assume that $\vec{x}\in \mathcal{O}^d$ satisfies the conditions $F(\vec{x})=f$ and $\vec{x}\equiv \boldsymbol{\lambda}\bmod g$. We uniquely write $\vec{x}=g\vec{t}+ \boldsymbol{\lambda},$ where $\vec{t}\in  \mathcal{O}^d$ and $\boldsymbol{\lambda}=(\lambda_1,\dots,\lambda_d)$ for $\lambda_i$ of degree strictly less than that of $g$. Define 
\begin{equation}\label{defk}
k:= \frac{f-F(\boldsymbol{\lambda})}{g}.
\end{equation}
Write $F(\vec{x})=\vec{x}^{\intercal}A\vec{x}$ for some symmetric $d\times d$ matrix $A$ with $\mathcal{O}$-coefficients. This is possible because $q$ is odd. If
$F(\vec{x}) =f$, then $g^2F(\vec{t})+2g\boldsymbol{\lambda}^{\intercal} A\vec{t} =f-F(\boldsymbol{\lambda})$ which implies that $g|2\boldsymbol{\lambda}^{\intercal}A\vec{t}-k.$ 
 Then, 
$F(\vec{t})+\frac{1}{g}(2\boldsymbol{\lambda}^{\intercal} A\vec{t}-k )=0.$
We also define 
\begin{equation}\label{Gteq}G(\vec{t}):=\frac{F(g\vec{t}+\boldsymbol{\lambda})-f}{g^2}=F(\vec{t})+\frac{1}{g}(2\boldsymbol{\lambda}^{\intercal} A\vec{t}-k).\end{equation}
Finally, we  define 
\begin{equation}\label{Nsum}N(w,\boldsymbol{\lambda}):=\sum_{\vec{t}\in\mathcal{O}^d} w(g\vec{t}+\boldsymbol{\lambda}) \delta{(G(\vec{t}))}.
\end{equation}  Note that $N(w,\boldsymbol{\lambda})$ is the weighted number of $\boldsymbol{x}\in\mathcal{O}^d$ satisfying the conditions  $F(\vec{x})=f$ and $\vec{x}\equiv \boldsymbol{\lambda}\bmod g$. Furthermore, $\delta(G(\vec{t}))\neq 0$ if and only if $G(\vec{t})=0$, in which case~\eqref{Gteq} implies that $g|2\boldsymbol{\lambda}^{\intercal} A\vec{t}-k$. In what follows, we write this latter condition in terms of character sums. Using Lemma~\ref{lem:orthogsum}, we have for $\gamma\in\Ki$
\[
\frac{1}{|g|}\sum_{\substack{\ell\in \cO\\ |\ell|<|g|}}\psi(\gamma \ell)=\begin{cases}1 & \mbox{if $|(\!(\gamma)\!)|<|g|^{-1}$}\\ 0 & \mbox{otherwise.}\end{cases}
\]
In particular,
\begin{equation}\label{inparticular}
\frac{1}{|g|}\sum_{\substack{\ell\in \cO\\ |\ell|<|g|}}\psi\left(\frac{(2\boldsymbol{\lambda}^{\intercal} A\vec{t}-k)\ell}{g}\right)=\begin{cases}1 & \mbox{if $|(\!(\frac{2\boldsymbol{\lambda}^{\intercal} A\vec{t}-k}{g})\!)|<|g|^{-1}$}\\ 0 & \mbox{otherwise.}\end{cases}
\end{equation}
The condition 
\[\left|(\!(\frac{2\boldsymbol{\lambda}^{\intercal} A\vec{t}-k}{g})\!)\right|<|g|^{-1}\]
is satisfied precisely when 
\[(\!(\frac{2\boldsymbol{\lambda}^{\intercal} A\vec{t}-k}{g})\!)=0,\]
that is, when $g|2\boldsymbol{\lambda}^{\intercal} A\vec{t}-k$. Combining~\eqref{Nsum} with~\eqref{inparticular}, we may rewrite
\begin{equation*}
N(w,\boldsymbol{\lambda})=\frac{1}{|g|}\sum_{\substack{\ell\in \cO\\ |\ell|<|g|}}\sum_{\vec{t}\in\mathcal{O}^d} \psi\left(\frac{(2\boldsymbol{\lambda}^{\intercal} A\vec{t}-k)\ell}{g}\right)w(g\vec{t}+\boldsymbol{\lambda}) \delta{(G(\vec{t}))}.
\end{equation*}
Note that in~\eqref{inparticular}, the character sum is nonzero if and only if $g|2\boldsymbol{\lambda}^{\intercal} A\vec{t}-k$, in which case $G(\vec{t})\in\mathcal{O}$. Therefore, the conditions of Lemma~\ref{delta} are satisfied when the character sum is nonzero. Applying \eqref{deltaeq} to $\delta(G(\vec{t}))$, we obtain

\begin{eqnarray*}
&&N(w,\boldsymbol{\lambda})\\&=&
\frac{1}{|g|\hQ^2}\sum_{\substack{\ell\in \cO\\ |\ell|<|g|}}\sum_{\vec{t}\in\mathcal{O}^d}
\sum_{\substack{
r\in \cO\\
|r|\leq \hat Q\\
\text{$r$ monic}
} }
\sumstar_{\substack{
|a|<|r|} }\psi\left(\frac{(2\boldsymbol{\lambda}^{\intercal} A\vec{t}-k)\ell}{g}+\frac{aG(\vec{t})}{r}\right)w(g\vec{t}+\boldsymbol{\lambda})h\left(\frac{r}{t^Q},\frac{G(\vec{t})}{t^{2Q}}\right)\\ &=& \frac{1}{|g|\hQ^2}\sum_{\substack{\ell\in \cO\\ |\ell|<|g|}}\sum_{\vec{t}\in\mathcal{O}^d}
\sum_{\substack{
r\in \cO\\
|r|\leq \hat Q\\
\text{$r$ monic}
} }
\sumstar_{\substack{
|a|<|r|} }\psi\left(\frac{(a+r\ell)(2\boldsymbol{\lambda}^{\intercal} A\vec{t}-k)+agF(\vec{t})}{gr}\right)w(g\vec{t}+\boldsymbol{\lambda})h\left(\frac{r}{t^Q},\frac{G(\vec{t})}{t^{2Q}}\right)\\&=&\frac{1}{|g|\hQ^2}\sum_{\substack{\ell\in \cO\\ |\ell|<|g|}}
\sum_{\substack{
r\in \cO\\
|r|\leq \hat Q\\
\text{$r$ monic}
} }
\sumstar_{\substack{
|a|<|r|} }\sum_{\vec{b}\in\mathcal{O}^d/(gr)}\sum_{\vec{s}\in\mathcal{O}^d}\psi\left(\frac{(a+r\ell)(2\boldsymbol{\lambda}^{\intercal} A\vec{b}-k)+agF(\vec{b})}{gr}\right)w(g(\vec{b}+gr\vec{s})+\boldsymbol{\lambda})\\&&\cdot h\left(\frac{r}{t^Q},\frac{G(\vec{b}+gr\vec{s})}{t^{2Q}}\right).
\end{eqnarray*}
From the Poisson summation formula, one deduces (see Lemma 2.1 of \cite{Browning}, for example) that for $v\in S(\Ki^d)$,
\[\sum_{\vec{t}\in\mathcal{O}^d}v(\vec{t})=\sum_{\vec{c}\in\mathcal{O}^d}\int_{\Ki^d}\psi(\left<\vec{c},\vec{t}\right>)v(\vec{t})d\vec{t}.\]
Applying this to the $\vec{s}$ variable in the above expression of $N(w,\boldsymbol{\lambda})$, we obtain the expression
\begin{eqnarray*}\label{form13}
&&N(w,\boldsymbol{\lambda})\\ &=&\frac{1}{|g|\hQ^2}\sum_{\substack{\ell\in \cO\\ |\ell|<|g|}}
\sum_{\substack{
r\in \cO\\
|r|\leq \hat Q\\
\text{$r$ monic}
} }
\sumstar_{\substack{
|a|<|r|} }\sum_{\vec{b}\in\mathcal{O}^d/(gr)}\sum_{\vec{c}\in\mathcal{O}^d}\psi\left(\frac{(a+r\ell)(2\boldsymbol{\lambda}^{\intercal} A\vec{b}-k)+agF(\vec{b})}{gr}\right)\\&&\cdot\int_{\Ki^d}\psi\left(\left<\vec{c},\vec{t}\right>\right)w(g(\vec{b}+gr\vec{t})+\boldsymbol{\lambda})h\left(\frac{r}{t^Q},\frac{G(\vec{b}+gr\vec{t})}{t^{2Q}}\right)d\vec{t}\\ &=& \frac{1}{|g|\hQ^2}\sum_{\substack{\ell\in \cO\\ |\ell|<|g|}}
\sum_{\substack{
r\in \cO\\
|r|\leq \hat Q\\
\text{$r$ monic}
} }
\sumstar_{\substack{
|a|<|r|} }\sum_{\vec{c}\in\mathcal{O}^d}\sum_{\vec{b}\in\mathcal{O}^d/(gr)}|gr|^{-d}\psi\left(\frac{(a+r\ell)(2\boldsymbol{\lambda}^{\intercal} A\vec{b}-k)+agF(\vec{b})-\left<\vec{c},\vec{b}\right>}{gr}\right)\\&&\cdot\int_{\Ki^d}\psi\left(\frac{\left<\vec{c},\vec{t}\right>}{gr}\right)w(g\vec{t}+\boldsymbol{\lambda})h\left(\frac{r}{t^Q},\frac{G(\vec{t})}{t^{2Q}}\right)d\vec{t}
\end{eqnarray*}
We express this in the condensed form
\begin{equation}\label{newequu}N(w,\boldsymbol{\lambda})=\frac{1}{|g|\hQ^2}\sum_{\substack{
r\in \cO\\
|r|\leq \hat Q\\
\text{$r$ monic}
} }\sum_{\vec{c}\in\mathcal{O}^d}|gr|^{-d}S_{g,r}(\vec{c})I_{g,r}(\vec{c}),\end{equation}
where $I_{g,r}(\vec{c})$ and $S_{g,r}(\vec{c})$ are defined by 
\begin{equation}\label{intt}
I_{g,r}(\vec{c}):= \int_{\Ki^d} h\left(\frac{r}{t^Q},\frac{G(\vec{t})}{t^{2Q}}\right) w(g\vec{t}+\boldsymbol{\lambda}) \psi\left(\frac{\left< \vec{c},\vec{t} \right>}{gr}    \right) d\vec{t},
\end{equation}
and
\begin{equation}\label{sala}S_{g,r}(\vec{c}):=\sum_{\substack{\ell\in \cO\\ |\ell|<|g|}}{\sum_{|a|<|r|}}^* S_{g,r}(a,\ell,\vec{c})\end{equation}
with
\begin{equation}\label{salam}
S_{g,r}(a,\ell,\vec{c}):=\sum_{\vec{b}\in \mathcal{O}^d/(gr)} \psi\left(\frac{(a+r\ell)(2\boldsymbol{\lambda}^{\intercal} A\vec{b}-k) + ag F(\vec{b})-\left< \vec{c},\vec{b} \right>} {gr}\right).
\end{equation}

In the next two sections, we bound from above $S_{g,r}$ and $I_{g,r}$.

\section{Bounds on the exponential sums $S_{g,r}(\vec{c})$}\label{boundsexponential}
Recall that $F(\vec{x})=\vec{x}^{\intercal}A\vec{x}$, and $\Delta=\det(A)$ is the discriminant. 
As in the statement of Theorem~\ref{strong}, we assume that $\gcd(f\Delta,g)=1$, and give an upper bound on an averaged sum of the $S_{g,r}(\vec{c})$. 
\begin{proposition}\label{prop:Supperbound}
We have the following upper bound
\[\sum_{\substack{r\in\mathcal{O}\\|r|<\hat{X}}}|g|^{-d}|r|^{-\frac{d+1}{2}}|S_{g,r}(\vec{c})|\ll_{F,\varepsilon}|g|^{\varepsilon}\hat{X}^{1+\varepsilon},\]
where $\hat{X}=O(|f|^C)$ for some fixed $C$.
\end{proposition}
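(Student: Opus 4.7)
The plan is to establish the pointwise bound $|S_{g,r}(\vec c)| \ll_{F,\varepsilon} |g|^{d+\varepsilon}|r|^{(d+1)/2+\varepsilon}$, after which summing over $|r|<\hat X$ gives the claim via $\#\{r\in\mathcal O: |r|<\hat X\} \ll \hat X$. The exponent $(d+1)/2$, rather than the $d/2$ one would get from a pure Gauss-sum estimate on the inner $\vec b$-sum, is essential and comes from extracting nontrivial cancellation in the $a$-sum via Weil's bound on Kloosterman/Sali\'e sums; the naive triangle inequality after bounding the inner Gauss sum is too weak by a factor of $|r|^{1/2}$. The $|g|^\varepsilon$ loss in the bound absorbs contributions from degenerate parameter ranges.

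I first restrict to the principal case $(g,r)=1$ and apply the Chinese remainder theorem to split the $\vec b$-sum in \eqref{salam} as a product of sums over $\mathcal O^d/(g)$ and $\mathcal O^d/(r)$. Since the quadratic term $agF(\vec b)/(gr)$ simplifies to $aF(\vec b)/r$, it depends only on $\vec b \bmod r$, and the mod-$g$ factor is a character sum in the linear form $2(a+r\ell)A\boldsymbol\lambda - \vec c$; this evaluates to $|g|^d$ when $2(a+r\ell)A\boldsymbol\lambda \equiv \vec c \pmod g$, and to $0$ otherwise. Under the paper's hypotheses ($(\Delta,g)=1$, some $\lambda_i$ coprime to $g$, $q$ odd), the vector $2A\boldsymbol\lambda$ is primitive mod $g$, so this congruence either pins down $a+r\ell \bmod g$ uniquely or has no solution. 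Since $r$ is invertible mod $g$ and $|\ell|<|g|$, for each admissible $a$ at most one $\ell$ contributes, collapsing the $\ell$-sum entirely.

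What remains is to estimate a sum over $a\bmod r$, $(a,r)=1$, of the mod-$r$ Gauss sum $G_r(a,\vec c) = \sum_{\vec b\bmod r}\psi((aF(\vec b) + \bar g_r(2aA\boldsymbol\lambda - \vec c)\cdot\vec b)/r)$, where $\bar g_r$ denotes an inverse of $g$ modulo $r$, weighted by the extra factor $\psi(-ak\bar g_r/r)$ coming from $-(a+r\ell)k/(gr)$. Completing the square (valid in the main case $(r,2a\Delta)=1$) yields
\[ G_r(a,\vec c) = G_r^{(0)}(a)\,\psi\!\left(\tfrac{-a\bar g_r^{\,2}F(\boldsymbol\lambda) - (4a)^{-1}\bar g_r^{\,2}\vec c^{\,T}A^{-1}\vec c + \bar g_r^{\,2}\boldsymbol\lambda\cdot \vec c}{r}\right), \]
where $G_r^{(0)}(a) = \sum_{\vec b}\psi(aF(\vec b)/r)$ has magnitude $|r|^{d/2}$ and depends on $a$ only through $\bigl(\tfrac{a}{r}\bigr)^d$. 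Folding in the extra $\psi(-ak\bar g_r/r)$, the $a$-sum assumes the shape $\sum_a^\ast \bigl(\tfrac{a}{r}\bigr)^d \psi((\alpha a + \beta\bar a)/r)$ for explicit $\alpha,\beta\in\mathcal O$ depending on $\vec c,\boldsymbol\lambda,k,g$; this is a Kloosterman sum when $d$ is even and a Sali\'e sum when $d$ is odd. Weil's bound over $\mathbb F_q[t]$ then supplies $\ll_\varepsilon |r|^{1/2+\varepsilon}$ in both cases, producing the desired pointwise estimate.

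The principal technical obstacle will be the treatment of the degenerate ranges. The case $(g,r)=h>1$ I expect to handle by factoring out $h$, which reduces the question to a sum of the same shape with $r$ replaced by $r/h$ at the cost of a divisor-function factor $\ll_\varepsilon |g|^\varepsilon$. The case $(r,2\Delta)>1$ requires a local analysis at the bad primes of $r$ in place of completing the square, giving additional $\varepsilon$-losses that are still absorbed by the $|g|^\varepsilon\hat X^\varepsilon$ in the claim. Both are standard in the function-field circle method following Browning--Vishe and Heath-Brown, but the explicit Gauss- and Sali\'e-sum identities over $\mathbb F_q[t]$ demand some care in odd characteristic.
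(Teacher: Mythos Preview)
Your overall architecture---collapse the $\ell$-sum via the mod-$g$ linear character, complete the square in the mod-$r$ Gauss sum, and identify the $a$-sum as a Kloosterman or Sali\'e sum---matches the paper's. The gap is in the pointwise bound you announce at the start: $|S_{g,r}(\vec c)| \ll_{F,\varepsilon} |g|^{d+\varepsilon}|r|^{(d+1)/2+\varepsilon}$ is \emph{false} in general, and the strategy ``prove a pointwise bound, then trivially sum over $|r|<\hat X$'' cannot succeed.

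The issue is your invocation of Weil. Carrying out your computation, the $a$-sum has the form $\sum_a^{\ast}\bigl(\tfrac{a}{r}\bigr)^d\psi\!\bigl((\alpha a+\beta\bar a)/r\bigr)$ with $\alpha\equiv -\bar g^{\,2}f$ and $\beta\equiv -\overline{4}\,\bar g^{\,2}F^*(\vec c)$ modulo $r$ (exactly as in the paper, after the substitution $a\mapsto ag^2$). The Weil bound here is $\tau(r)|r|^{1/2}|\gcd(\alpha,\beta,r)|^{1/2}$, \emph{not} $|r|^{1/2+\varepsilon}$; and $\gcd(\alpha,\beta,r)=\gcd(f,F^*(\vec c),r)$ can be as large as $|r|$. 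For instance, if $\vec c$ is isotropic for $F^*$ (such $\vec c\in\mathcal O^d$ exist once $d\geq 3$) then $\beta\equiv 0$, and for any prime $r\mid f$ the $a$-sum is $\varphi(r)\sim|r|$, yielding $|S_{g,r}(\vec c)|\sim|g|^d|r|^{d/2+1}$, a factor $|r|^{1/2}$ above your claimed bound. What rescues the proposition is that this loss is acceptable \emph{on average}: the paper keeps the factor $|\gcd(r_1,f)|^{1/2}$ explicitly and then shows $\sum_{|r_1|<\hat X}|\gcd(r_1,f)|^{1/2}/|r_1|\ll\hat X^{\varepsilon}$, using the hypothesis $\hat X=O(|f|^C)$. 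Your trivial summation loses this averaging.

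There is a second, related gap in your handling of the bad part of $r$. The paper does not treat $(g,r)>1$ and $(r,\Delta)>1$ by ad hoc factoring or ``local analysis''; instead it writes $r=r_1r_2$ with $(r_1,g\Delta)=1$ and $r_2\mid(g\Delta)^\infty$, factors $S_{g,r}=S_1S_2$ accordingly, and bounds $S_2$ by Cauchy--Schwarz to get $|S_2|\ll_\Delta |g|^d|r_2|^{d/2+1}$. This again exceeds $|r_2|^{(d+1)/2}$ by $|r_2|^{1/2}$ pointwise, and again is controlled only on average via $\sum_{r_2\mid(g\Delta)^\infty,\,|r_2|<\hat X/|r_1|}|r_2|^{1/2}\ll |g\Delta|^\varepsilon\hat X^{1+\varepsilon}/|r_1|$. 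Your sketch does not supply this step.

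In short: the Gauss-sum and Kloosterman identifications are right, but you must carry the factors $|\gcd(r_1,f)|^{1/2}$ and $|r_2|^{1/2}$ through to the summation over $r$ and average them there; a uniform pointwise estimate of the strength you claim is not available.
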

Initially, a version of this result was proved by Heath-Brown (Lemma 28 of \cite{Brown}). This is a function field analogue of proposition 4.1 of the first author in \cite{Naser2}. We first prove a lemma indicating that most $S_{g,r}(a,\ell,\vec{c})$ vanish.
\begin{lemma}\label{lem:cform}Unless $\vec{c}\equiv 2(ar+\ell)A\boldsymbol{\lambda}\bmod g$, we have $S_{g,r}(a,\ell,\vec{c})=0$. Consequently, $S_{g,r}(\vec{c})=0$ unless $\vec{c}\equiv\alpha A\boldsymbol{\lambda}\bmod g$ for some $\alpha\in\mathcal{O}$.
\end{lemma}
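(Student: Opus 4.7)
The plan is to exploit a translation invariance in the sum $S_{g,r}(a,\ell,\vec{c})$. Since $\vec{b}$ ranges over $\mathcal{O}^d/(gr)$, for any fixed $\vec{h}\in\mathcal{O}^d$ the shift $\vec{b}\mapsto\vec{b}+r\vec{h}$ is a bijection of the index set, and tracking how the phase transforms under this shift pins down a linear character in $\vec{h}$ on $\mathcal{O}^d$ that must be trivial for the sum to survive.

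First I would use the symmetry of the matrix $A$ associated to $F$ to expand
\[F(\vec{b}+r\vec{h}) = F(\vec{b}) + 2r\vec{b}^TA\vec{h} + r^2F(\vec{h}),\]
substitute into \eqref{salam}, and subtract the original numerator of the argument of $\psi$. After dividing through by $gr$, the change in the argument reduces to
\[\frac{2(a+r\ell)\boldsymbol{\lambda}^TA\vec{h} - \langle\vec{c},\vec{h}\rangle}{g} + a\bigl(2\vec{b}^TA\vec{h} + rF(\vec{h})\bigr).\]
The second summand lies in $\mathcal{O}$ (since $A$ and all other data have entries in $\mathcal{O}$), so $\psi$ evaluates to $1$ on it and the dependence on the summation variable $\vec{b}$ drops out. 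Re-indexing via the bijection then yields the identity
\[S_{g,r}(a,\ell,\vec{c}) \;=\; \psi\!\left(\frac{2(a+r\ell)\boldsymbol{\lambda}^TA\vec{h} - \langle\vec{c},\vec{h}\rangle}{g}\right)\,S_{g,r}(a,\ell,\vec{c}),\]
valid for every $\vec{h}\in\mathcal{O}^d$.

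From here the conclusion is forced: either $S_{g,r}(a,\ell,\vec{c})=0$, or the displayed phase is identically $1$ on $\mathcal{O}^d$. By the standard fact that $\psi(\gamma b)=1$ for every $b\in\mathcal{O}$ forces $\gamma\in\mathcal{O}$ (which one can read off from the definition of $\psi$, or recover from Lemma~\ref{lem:orthogsum} with $N=0$), applied to each coordinate of $\vec{h}$ in turn, triviality of the phase is equivalent to $g$ dividing each component of $2(a+r\ell)A\boldsymbol{\lambda}-\vec{c}$, i.e.\ to
\[\vec{c}\equiv 2(a+r\ell)A\boldsymbol{\lambda}\pmod g.\]
The ``consequently'' statement is then immediate from $S_{g,r}(\vec{c})=\sum_{\ell,a}S_{g,r}(a,\ell,\vec{c})$: if the left side is nonzero then some summand is nonzero, and that summand forces $\vec{c}\equiv \alpha A\boldsymbol{\lambda}\pmod g$ with $\alpha:=2(a+r\ell)\in\mathcal{O}$.

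The whole argument is bookkeeping for how the quadratic form transforms under an affine shift, together with the orthogonality of $\psi$ against $\mathcal{O}$; I do not foresee any genuine obstacle. The one subtlety worth checking carefully is that the cross-term $a(2\vec{b}^TA\vec{h}+rF(\vec{h}))$ is genuinely in $\mathcal{O}$, which is what makes the residual phase depend only on $\vec{h}$ and not on $\vec{b}$ and hence what allows the orthogonality-of-characters step to go through cleanly.
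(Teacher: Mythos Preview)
Your argument is correct and is essentially the paper's proof, just packaged slightly differently: the paper splits $\vec{b}=r\vec{b}_1+\vec{b}_2$ with $\vec{b}_1$ ranging over $\mathcal{O}^d/(g)$ and isolates the inner character sum in $\vec{b}_1$, which by Lemma~\ref{lem:orthogsum} vanishes unless $\vec{c}\equiv 2(a+r\ell)A\boldsymbol{\lambda}\pmod g$; your translation $\vec{b}\mapsto\vec{b}+r\vec{h}$ is the same shift, and your functional-equation conclusion is equivalent to summing over $\vec{h}$ mod $g$. One small quibble: the parenthetical appeal to Lemma~\ref{lem:orthogsum} with $N=0$ is vacuous (that case reads $1=1$); the correct justification is the one you gave first, directly from the definition of $\psi$, or else apply Lemma~\ref{lem:orthogsum} with $\hat N=|g|$ after summing your identity over $\vec{h}$ with $|\vec{h}|<|g|$.
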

\begin{proof} Recall summation~\eqref{salam} over $\vec{b}\in \mathcal{O}^d/(gr).$
Write $\vec{b}=r\vec{b}_1+\vec{b}_2$, where $\vec{b}_1$ is a vector modulo $g$ and $\vec{b}_2$ is a vector modulo $r$. We may then rewrite
\[S_{g,r}(a,\ell,\vec{c})=\sum_{\vec{b}_2} \psi\left(\frac{(a+r\ell)(2\boldsymbol{\lambda}^{\intercal} A\vec{b}_2-k) + ag F(\vec{b}_2)-\left< \vec{c},\vec{b}_2 \right>} {gr}\right)\sum_{\vec{b}_1}\psi\left(\frac{2(a+r\ell)\boldsymbol{\lambda}^{\intercal} A\vec{b}_1-\left< \vec{c},\vec{b}_1 \right>} {g}\right).\]
From Lemma~\ref{lem:orthogsum}, the second sum vanishes unless $\vec{c}\equiv 2(a+r\ell)A\lambda\bmod g$, which gives the first statement in the lemma. Since $S_{g,r}(\vec{c})$ is a sum of the $S_{g,r}(a,\ell,\vec{c})$, we obtain that it is zero unless possibly $\vec{c}\equiv\alpha A\boldsymbol{\lambda}\bmod g$ for some $\alpha\in\mathcal{O}$.
\end{proof}
By definition,
\[S_{g,r}(\vec{c})=\sum_{\substack{\ell\in \cO\\ |\ell|<|g|}}{\sum_{|a|<|r|}}^*\sum_{\vec{b}\in \mathcal{O}^d/(gr)} \psi\left(\frac{(a+r\ell)(2\boldsymbol{\lambda}^{\intercal} A\vec{b}-k) + ag F(\vec{b})-\left< \vec{c},\vec{b} \right>} {gr}\right).\]
Since the sum over $\ell$ is zero unless $g|2\boldsymbol{\lambda}^{\intercal}A\vec{b}-k$, in which case it contributes a factor of $|g|$, we have
\[S_{g,r}(\vec{c})=|g|{\sum_{|a|<|r|}}^*\sum_{\substack{\vec{b}\in \mathcal{O}^d/(gr)\\ g|2\boldsymbol{\lambda}^{\intercal}A\vec{b}-k}} \psi\left(\frac{a(2\boldsymbol{\lambda}^{\intercal} A\vec{b}-k) + ag F(\vec{b})-\left< \vec{c},\vec{b} \right>} {gr}\right).\]
We will give a bound on each of the $S_{g,r}(\vec{c})$. We do so by first decomposing $S_{g,r}(\vec{c})$ into the product of two sums and then bounding each of the two sums separately.\\
\\
Write $r=r_1r_2$, where $r_i\in\mathcal{O}$ and $\gcd(r_1,\Delta g)=1$ and such that the prime divisors of $r_2$ are among the prime divisors of $\Delta g$. In particular, $\gcd(r_1,gr_2)=1$, and so we may write
\[k=gr_2k_1+r_1k_2\]
and
\[a=r_2a_1+r_1a_2\]
for some $k_1,k_2\in\mathcal{O}$ and unique $a_1\in\mathcal{O}/(r_1)$, $a_2\in\mathcal{O}/(r_2)$. Similarly, we may find vectors $\vec{b}_1\in\mathcal{O}^d/(r_1)$ and $\vec{b}_2\in\mathcal{O}^d/(gr_2)$ such that
\[\vec{b}=gr_2\vec{b}_1+r_1\vec{b}_2.\]
If we set
\begin{equation}\label{S1}
S_1:=\sumstar_{|a_1|<|r_1|}\sum_{\vec{b}_1\in \mathcal{O}^d/(r_1)} \psi\left(\frac{2r_2a_1\boldsymbol{\lambda}^{\intercal}A\vec{b}_1+a_1(gr_2)^2F(\vec{b}_1)-\left<\vec{c},\vec{b}_1\right>-r_2a_1k_1    }{r_1}\right),
\end{equation}
and 
\begin{equation}\label{S2}
S_2:=|g|\sumstar_{|a_2|<|r_2|}\sum_{\substack{\vec{b}_2\in \mathcal{O}^d/(gr_2)\\ g|2\boldsymbol{\lambda}^{\intercal}A\vec{b}_2-k\overline{r_1}}}  \psi\left(\frac{2r_1a_2\boldsymbol{\lambda}^{\intercal}A\vec{b}_2+a_2gr_1^2F(\vec{b}_2)-\left<\vec{c},\vec{b}_2\right>-r_1a_2k_2}{gr_2}\right),
\end{equation}
where $\overline{r_1}$ is the inverse of $r_1$ mod $g.$ Then we see from a simple substitution of the above that
\[S_{g,r}(\vec{c})=S_1S_2.\]
We now proceed to bound $S_1$ and $S_2$. The bound on $S_1$ uses the Weil bound on exponential sums, while the bound on $S_2$ follows from the Cauchy--Schwarz inequality.\\
\\
In order to bound $S_1$ from above, consider the following situation. Let $G(\vec{x}):=\vec{x}^{\intercal}B\vec{x}$, where $B$ is a symmetric matrix $B\in M_d(\mathcal{O})$ with $D:=\det(B)\neq 0$. Furthermore, let $r\in\mathcal{O}$ be such that $\gcd(r,D)=1$, and for each $e\in\mathcal{O}/(r)$, $\vec{c},\vec{c}'\in\mathcal{O}^d/(r)$, define
\begin{equation}\label{SrG}S_r(G,\vec{c},\vec{c}',e):=\sumstar_{|a|<|r|}\sum_{\vec{b}\in\mathcal{O}^d/(r)} \psi\left(\frac{a(G(\vec{b})+\left<\vec{c}^{\prime},\vec{b}\right>+e)-\left<\vec{c},\vec{b}\right>}{r}\right).\end{equation}

\begin{proposition}\label{lem:S1bound}With the notation as above,
\[|S_1|\leq |r_{1}|^{\frac{d+1}{2}}\tau(r_1)|\gcd(r_1,f)|^{1/2},\]
and 
\[|S_2|\ll_{\Delta}|g|^d|r_2|^{\frac{d}{2}+1},\]
where $\tau(.)$ is the divisor function. In particular,
\[
|S_{g,r}(\vec{c})|\ll_{\Delta}|g|^d|r|^{\frac{d+1}{2}}|r_2|^{\frac{1}{2}}|\tau(r_1)|\gcd(r_1,f)|^{1/2}
\]
\end{proposition}
Prior to proving Proposition~\ref{lem:S1bound}, we prove a number of lemmas. The following two results pertain to diagonalizing our quadratic forms. We will also use them in the following sections.
\begin{lemma}\label{diag}Suppose $R$ is a discrete valuation ring with maximal ideal $\mathfrak{m}=(\pi)$, residue field $k$ of odd characteristic, and fraction field $\textup{Frac}(R)$. If $A$ is a symmetric matrix of size $d$ with coefficients in $\textup{Frac}(R)$, then there is a matrix $g\in\GL_d(R)$ such that
\[g^{\intercal}Ag\]
is a diagonal matrix. 
\end{lemma}
\begin{proof}We proceed by induction on $d$. The case $d=1$ is trivial. By multiplying by a suitable power of $\pi$, we may assume without loss of generality that $A$ is a matrix over $R$, and $\bar{A}:=A\bmod\mathfrak{m}$ is nonzero as a matrix over $R/\mathfrak{m}=k$. Since $k$ is a field of odd characteristic, there is a matrix $\bar{h}\in\GL_d(k)$ such that $\bar{h}^{\intercal}\overline{A}\bar{h}=\text{diag}(\bar{\eta}_1,\hdots,\bar{\eta}_d)$. Choose a lift $h$ of $\bar{h}$ with coefficients in $R$. Since $\bar{A}\neq 0$, we may assume without loss of generality that $\bar{\eta}_1\neq 0.$ Let $A_1:=h^{\intercal}Ah=\begin{bmatrix}\vec{a}_1,\dots,\vec{a}_d  \end{bmatrix}=\begin{bmatrix}a_{i,j}  \end{bmatrix},$ where $\vec{a}_i$ is the $i$th column vector of $A_1,$ and $a_{i,j}$ is the $i$th and $j$th coordinate of $A_1.$ Since $\bar{\eta}_1\neq 0,$  $a_{1,1} \in R^*$.
Let 
\[
H:=\begin{bmatrix}1 &-\frac{a_{1,2}  }{a_{1,1}} & \dots & -\frac{a_{1,d}}{a_{1,1}}
\\
0 &
\\
\vdots &  & I_{{d-1}\times{d-1}}
\\
0
   \end{bmatrix}\in \GL_d(R).
\]
It is easy to check that 
\[
H^{\intercal}A_1H=\begin{bmatrix}a_{11} &0 & \dots & 0
\\
0 &
\\
\vdots &  & A_2
\\
0
   \end{bmatrix},
\]
where $A_2^{\intercal}=A_2\in M_{{(d-1)}\times {(d-1)}}(R).$ The lemma follows from  the induction hypothesis on $A_2.$
\end{proof}
\begin{corollary}\label{cor:diag}Suppose $R$ is a $k$-algebra that is a principal ideal domain, $k$ a field of odd characteristic. Furthermore, suppose $I=\mathfrak{m}_1^{k_1}\dots\mathfrak{m}_s^{k_s}$ is a product of maximal ideals of $R$, $\mathfrak{m}_i$ distinct. If $A$ is a symmetric matrix over $R/I$ of size $d$, then there is a matrix $g\in\GL_d(R/I)$ such that
\[g^{\intercal}Ag\]
is a diagonal matrix.
\end{corollary}
\begin{proof}
By the Chinese remainder theorem, $R/I\simeq R/\mathfrak{m}_1^{k_1}\times\dots\times R/\mathfrak{m}_s^{k_s}$. Furthermore, this is a product of discrete valuation rings with residue fields of odd characteristic. The conclusion follows by applying Lemma~\ref{diag} to each local ring $R/\mathfrak{m}_i^{k_i}$.
\end{proof}
In bounding $S_1$, the following lemma will allow us to reduce to the case where $r_1=\varpi^k$, $\varpi$ an irreducible polynomial of $\mathcal{O}$.
\begin{lemma}[Multiplicativity of $S_r(G,\vec{c},\vec{c}',e)$]\label{lem:multiplicative}Suppose $r=uv$ for coprime $u,v\in\mathcal{O}$. Then
\[S_r(G,\vec{c},\vec{c}',e)=S_u(G,\bar{v}\vec{c},\vec{c}',e)S_v(G,\bar{u}\vec{c},\vec{c}',e),\]
where $\bar{v}$ is the inverse of $v$ mod $u$ and $\bar{u}$ is the inverse of $u$ mod $v.$
\end{lemma}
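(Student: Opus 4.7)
The plan is to prove this Chinese Remainder Theorem-style factorization by substituting the CRT decompositions of the summation variables $a$ and $\vec{b}$, and then observing that the exponent decouples modulo $u$ and $v$.

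First, since $\gcd(u,v) = 1$, the reduction map $\mathcal{O}/(r) \to \mathcal{O}/(u) \times \mathcal{O}/(v)$ is a bijection, and likewise for $(\mathcal{O}/(r))^*$. I would write $a = va_1 + ua_2$ with $a_1 \in (\mathcal{O}/(u))^*$, $a_2 \in (\mathcal{O}/(v))^*$, and $\vec{b} = v\vec{b}_1 + u\vec{b}_2$ with $\vec{b}_1 \in (\mathcal{O}/(u))^d$, $\vec{b}_2 \in (\mathcal{O}/(v))^d$. Using that $G$ is quadratic with associated symmetric form $B$, one has
\[G(v\vec{b}_1 + u\vec{b}_2) = v^2 G(\vec{b}_1) + u^2 G(\vec{b}_2) + 2uv\, \vec{b}_1^T B \vec{b}_2,\]
and similarly $\langle \vec{c}', v\vec{b}_1 + u\vec{b}_2\rangle = v\langle \vec{c}',\vec{b}_1\rangle + u\langle \vec{c}',\vec{b}_2\rangle$ (and likewise for $\vec{c}$).

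Next, I would divide the entire numerator of the exponent by $r = uv$ and separate the contributions. The key observation is that any term in $aG(\vec{b})$ or $a\langle \vec{c}',\vec{b}\rangle$ that carries a factor of $uv$ lies in $\mathcal{O}$ after division by $r$, and therefore contributes trivially to $\psi$ (since $\psi|_{\mathcal{O}}$ is trivial). In particular, the cross term $2uv\vec{b}_1^T B\vec{b}_2$ disappears, as do the "mixed" terms $vu a_1 G(\vec{b}_2)$ and $uv a_2 G(\vec{b}_1)$, etc. What survives splits cleanly as a sum of an expression with denominator $u$ (involving only $a_1, \vec{b}_1$) and one with denominator $v$ (involving only $a_2, \vec{b}_2$). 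The constant term $ae$ splits as $(va_1 + ua_2)e/(uv) = a_1e/u + a_2e/v$, also cleanly. The linear term $-\langle \vec{c},\vec{b}\rangle/r$ splits as $-\langle \vec{c},\vec{b}_1\rangle/u - \langle \vec{c},\vec{b}_2\rangle/v$.

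At this point the double sum factors as a product of two sums, and the remaining task is to identify each factor with the claimed $S_u$ and $S_v$. For the $u$-factor, the exponent reads
\[\frac{v^2 a_1 G(\vec{b}_1) + v a_1 \langle \vec{c}',\vec{b}_1\rangle + a_1 e - \langle \vec{c},\vec{b}_1\rangle}{u}.\]
Making the bijective substitution $\vec{b}_1 \mapsto \bar{v}\vec{b}_1$ (valid since $\gcd(v,u)=1$) turns $v^2 G(\vec{b}_1) \to G(\vec{b}_1)$, $v\langle \vec{c}',\vec{b}_1\rangle \to \langle \vec{c}',\vec{b}_1\rangle$, and $\langle \vec{c},\vec{b}_1\rangle \to \langle \bar{v}\vec{c},\vec{b}_1\rangle$, producing exactly $S_u(G, \bar{v}\vec{c},\vec{c}',e)$. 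Symmetrically, the $v$-factor becomes $S_v(G,\bar{u}\vec{c},\vec{c}',e)$ after $\vec{b}_2 \mapsto \bar{u}\vec{b}_2$.

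The proof is essentially bookkeeping, and the only subtle point is confirming that the substitutions $\vec{b}_i \mapsto \bar{v}\vec{b}_i$ and $\vec{b}_j \mapsto \bar{u}\vec{b}_j$ produce precisely the factor $\bar{v}\vec{c}$ (respectively $\bar{u}\vec{c}$) in the linear term while simultaneously absorbing all remaining $v$'s and $u$'s from the quadratic and $\vec{c}'$-linear terms. This is the main—and only—conceptual step; the rest is verifying that the "mixed" contributions vanish because $\psi$ is trivial on $\mathcal{O}$.
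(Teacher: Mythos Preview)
Your proof is correct and follows essentially the same route as the paper's: both use the CRT parametrizations $a=va_1+ua_2$ and $\vec{b}=v\vec{b}_1+u\vec{b}_2$, discard the mixed terms using that $\psi|_{\cO}$ is trivial, and then reparametrize to absorb the stray factors of $u$ and $v$. The only cosmetic difference is that the paper phrases the final step as ``$v\vec{b}_1$ ranges over a complete set of residues modulo $u$'' rather than your explicit substitution $\vec{b}_1\mapsto \bar v\vec{b}_1$, which is of course the same thing.
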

\begin{proof}Since $u$ and $v$ are coprime, as $\vec{b}_1$ ranges over $\mathcal{O}^d/(u)$ and $\vec{b}_2$ ranges over $\mathcal{O}^d/(v)$, the vector
\[\vec{b}=v\vec{b}_1+u\vec{b}_2\]
ranges over a complete set of vectors modulo $uv=r$. Similarly, as $a_1$ ranges over $\mathcal{O}/(u)$ and $a_2$ ranges over $\mathcal{O}/(v)$,
\[a=va_1+ua_2\]
ranges over a complete set of polynomials modulo $uv=r$. Making these substitutions, the summands in $S_r(G,\vec{c},\vec{c}',e)$
become
\begin{eqnarray*}
&&\psi\left(\frac{a(G(\vec{b})+\left<\vec{c}^{\prime},\vec{b}\right>+e)-\left<\vec{c},\vec{b}\right>}{r}\right)\\&=&
\psi\left(\frac{(va_1+ua_2)(G(v\vec{b}_1+u\vec{b}_2)+\left<\vec{c}^{\prime},v\vec{b}_1+u\vec{b}_2\right>+e)-\left<\vec{c},v\vec{b}_1+u\vec{b}_2\right>}{uv}\right)\\&=& 
\psi\left(\frac{(va_1+ua_2)(v^2G(\vec{b}_1)+u^2G(\vec{b}_2)+v\left<\vec{c}',\vec{b}_1\right>+u\left<\vec{c}',\vec{b}_2\right>+e)-v\left<\vec{c},\vec{b}_1\right>-u\left<\vec{c},\vec{b}_2\right>}{uv}\right)\\&=& \psi\left(\frac{a_1(v^2G(\vec{b}_1)+\left<v\vec{c}',\vec{b}_1\right>+e)-\left<\vec{c},\vec{b}_1\right>}{u}\right)\psi\left(\frac{a_2(u^2G(\vec{b}_2)+\left<u\vec{c}',\vec{b}_2\right>+e)-\left<\vec{c},\vec{b}_2\right>}{v}\right)\\&=& \psi\left(\frac{a_1(G(v\vec{b}_1)+\left<\vec{c}',v\vec{b}_1\right>+e)-\left<\bar{v}\vec{c},v\vec{b}_1\right>}{u}\right)\psi\left(\frac{a_2(G(u\vec{b}_2)+\left<\vec{c}',u\vec{b}_2\right>+e)-\left<\bar{u}\vec{c},u\vec{b}_2\right>}{v}\right).
\end{eqnarray*}
Since $u$ and $v$ are coprime, $u\vec{b}_2$ and $v\vec{b}_1$ range over a complete set of residues modulo $v$ and $u$, respectively. As a result,
\[S_r(G,\vec{c},\vec{c}',e)=S_u(G,\bar{v}\vec{c},\vec{c}',e)S_v(G,\bar{u}\vec{c},\vec{c}',e),\]
as required.
\end{proof}
We will obtain Proposition~\ref{lem:S1bound} using the function-field analogue of the Weil bound on Kloosterman and Sali\'e sums, whose proof we sketch in the following.
\begin{lemma}\label{Weilbound}Suppose $m,n,c\in\mathbb{F}_q[t]$, $c\neq 0$, and $\theta\in\{0,1\}$. Then
\[\left|\sumstar_{|x|<|c|}\left(\frac{x}{c}\right)^{\theta}\psi\left(\frac{mx+n\overline{x}}{c}\right)\right|\leq \tau(c)|c|^{1/2}|\gcd(m,n,c)|^{1/2}.\]
\end{lemma}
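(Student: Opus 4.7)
The plan is to reduce to $c=\varpi^k$ for irreducible $\varpi\in\mathcal{O}$ via a twisted multiplicativity, and then handle the prime case and the prime power case $k\geq 2$ separately. First, I would establish the analogue of Lemma~\ref{lem:multiplicative}: for coprime $c_1,c_2$ with $c=c_1c_2$, writing $x=c_2 x_1+c_1 x_2$ with $x_i\in(\mathcal{O}/(c_i))^{\times}$, the sum over $x$ factors as a product of two sums of the same shape with $c$ replaced by $c_i$ and with $m,n$ replaced by appropriate multiples of $\bar c_j$ (which do not affect the gcd factor since $(c_2,c_1)=1$). The quadratic symbol splits as $(x/c)=(x_1/c_1)(x_2/c_2)$ up to a unit from quadratic reciprocity. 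Since $\tau(c_1c_2)=\tau(c_1)\tau(c_2)$ and the gcd factor is multiplicative in $c$, it suffices to prove the bound for $c=\varpi^k$.

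For the prime case $c=\varpi$, identify $\mathcal{O}/(\varpi)\cong\mathbb{F}_{q^{\deg\varpi}}$ so the sum becomes a classical twisted Kloosterman sum (for $\theta=0$) or Sali\'e sum (for $\theta=1$) over this finite field. If $\varpi\nmid\gcd(m,n)$ one at least of $m,n$ is a unit, so after a change of variables the sum is a genuine Kloosterman/Sali\'e sum; I would invoke Weil's bound (equivalently, Deligne's purity for the relevant $\ell$-adic sheaf on $\mathbb{G}_m$, or the Sali\'e evaluation in terms of Gauss sums when $\theta=1$) to obtain the bound $2|\varpi|^{1/2}=\tau(\varpi)|\varpi|^{1/2}$. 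If $\varpi\mid\gcd(m,n)$ the exponential is trivial and what remains is a pure character sum bounded trivially by $|\varpi|$, which matches the right-hand side via the extra $|\gcd(m,n,\varpi)|^{1/2}=|\varpi|^{1/2}$ factor.

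For prime powers $\varpi^k$ with $k\geq 2$, I would run a $\varpi$-adic stationary phase argument. Write $k=k_1+k_2$ with $k_1=\lceil k/2\rceil$ and decompose $x=x_0+\varpi^{k_1}y$ with $x_0$ ranging over units modulo $\varpi^{k_1}$ and $y$ over $\mathcal{O}/(\varpi^{k_2})$. Since $\overline{x}=\overline{x_0}-\varpi^{k_1}\overline{x_0}^{\,2}y\pmod{\varpi^k}$ and the quadratic character only depends on $x_0\bmod\varpi$ (so it is constant on the inner coset), the inner $y$-sum collapses by Lemma~\ref{lem:orthogsum} to the congruence $m\equiv n\overline{x_0}^{\,2}\pmod{\varpi^{k_2}}$. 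Counting lifts of the critical points of this congruence, while tracking $v_\varpi(\gcd(m,n,\varpi^k))$ exactly as in the number-field proof, produces the bound with leading constant $\tau(\varpi^k)=k+1$.

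The main obstacle is the mixing of the Jacobi symbol with the stationary-phase expansion when $\theta=1$ and $k$ is odd, where the sum is genuinely Sali\'e-type rather than Kloosterman-type; one must verify that the quadratic character restricted to the coset $x_0+\varpi^{k_1}y$ is constant (which holds because $k_1\geq 1$ and we work modulo units), so that the expansion proceeds as in the untwisted case with only a harmless multiplicative factor of $\left(\frac{x_0}{\varpi^k}\right)$ out front. The remaining assembly of the bounds across prime powers via multiplicativity is routine and delivers the stated inequality.
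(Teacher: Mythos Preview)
Your approach is essentially the paper's: multiplicative reduction to prime powers, then Weil for $k=1$ and $p$-adic stationary phase for $k\geq 2$. The one structural difference is that you put the larger half $k_1=\lceil k/2\rceil$ on the base variable $x_0$, whereas the paper writes $x=a_1+a_2\varpi^{\lfloor k/2\rfloor}$ with the \emph{smaller} half on the base $a_1$ and the larger on the perturbation $a_2$.

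That swap matters when $k$ is odd. With your choice, the inner $y$-sum is purely linear (since $2k_1\geq k$) and cleanly collapses to the congruence $m\equiv n\overline{x_0}^{\,2}\pmod{\varpi^{k_2}}$, as you say. But then $x_0$ ranges modulo $\varpi^{k_1}$ while the congruence only pins it down modulo $\varpi^{k_2}$, so each root has $|\varpi|$ lifts. Literally ``counting lifts'' gives $2|\varpi|\cdot|\varpi|^{k_2}=2|\varpi|^{(k+1)/2}$, off by $|\varpi|^{1/2}$. To recover the correct bound you must sum the phase $\psi\bigl((mx_0+n\overline{x_0})/\varpi^k\bigr)$ over those lifts $x_0=x_0^*+\varpi^{k_2}z$; expanding to second order in $z$ produces a quadratic Gauss sum over $\mathcal{O}/(\varpi)$ of magnitude $|\varpi|^{1/2}$, which supplies the missing square-root saving. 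In the paper's decomposition that Gauss sum already sits inside the inner $a_2$-sum (the $\overline{a_1}^3a_2^2\varpi^{2\lfloor k/2\rfloor}$ term survives modulo $\varpi^k$ when $k$ is odd), so the extra step is absorbed there. Either route works; you just need to execute the secondary Gauss sum explicitly rather than fold it into ``counting lifts''.
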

\begin{proof}
By a standard computation as in Lemma~\ref{lem:multiplicative}, we may reduce to when $c$ is a prime power $\varpi^{k}$. Furthermore, we may assume that $\varpi\nmid mn$; otherwise we have Ramanujan sums which may be explicitly computed as in the case of integers and shown to satisfy the above bound (see equations (3.1)-(3.3) of~\cite{MR2061214} for usual Ramanujan sums).\\
\\
First, note that when $k=1$, then this is the Weil bound on Kloosterman sums over the finite field $\mathcal{O}/(\varpi)$. This is a consequence of Theorem 10 of~\cite{Kowalski:expsums}. For Sali\'e sums, it is a consequence of Theorem 2.19 of \textit{loc.cit}. We may therefore assume that $k\geq 2$.\\

Note that by factoring out a factor of $|\gcd(m,n,c)|$ and summing modulo $c/\gcd(m,n,c)$, we may assume without loss of generality that $\gcd(m,n,c)=1$. Let us assume furthermore that we have a Kloosterman sum, that is, $\theta=0$.\\
\\
Write $x=a_1+a_2\varpi^{\lfloor k/2\rfloor}$, where $a_1$ is chosen modulo $\varpi^{\lfloor k/2\rfloor}$ and is relatively prime to $\varpi$, and $a_2$ is chosen modulo $\varpi^{\lceil k/2\rceil}$. Furthermore, note that
\[\overline{a_1+a_2\varpi^{\lfloor k/2\rfloor}}\equiv \overline{a_1}-\overline{a_1}^2a_2\varpi^{\lfloor k/2\rfloor}+\overline{a_1}^3a_2^2\varpi^{2\lfloor k/2\rfloor}\bmod\varpi^{k},\]
where the inverses are computed modulo $\varpi^k$. Making these substitutions, we obtain 
\begin{eqnarray*}
&&\psi\left(\frac{mx+n\overline{x}}{\varpi^k}\right)\\&=&\psi\left(\frac{m(a_1+a_2\varpi^{\lfloor k/2\rfloor})+n(\overline{a_1+a_2\varpi^{\lfloor k/2\rfloor}})}{\varpi^k}\right)\\ &=& \psi\left(\frac{(a_1+a_2\varpi^{\lfloor k/2\rfloor})m+n(\overline{a_1}-\overline{a_1}^2a_2\varpi^{\lfloor k/2\rfloor}+\overline{a_1}^3a_2^2\varpi^{2\lfloor k/2\rfloor})}{\varpi^k}\right)\\ &=& \psi\left(\frac{ma_1+n\overline{a_1}+\varpi^{\lfloor k/2\rfloor}\left(a_2\left(m-\overline{a_1}^2n\right)+\overline{a_1}^3a_2^2\varpi^{\lfloor k/2\rfloor}n\right)}{\varpi^k}\right)\\ &=& \psi\left(\frac{ma_1+n\overline{a_1}}{\varpi^k}\right)\psi\left(\frac{a_2\left(m-\overline{a_1}^2n\right)+\overline{a_1}^3a_2^2\varpi^{\lfloor k/2\rfloor}n}{\varpi^{\lceil k/2\rceil}}\right).
\end{eqnarray*}
Summation over $a_2\bmod\varpi^{\lceil k/2\rceil}$ gives us zero unless
\[m-n\overline{a_1}^2\equiv 0\bmod \varpi^{\lfloor k/2\rfloor}.\]
For such $a_1$, if $k$ is even, summing over $a_2$ contributes a factor of $|\varpi|^{k/2}$. If $k$ is odd, then for such $a_1$, summing over $a_2$ contributes a factor of
\[|\varpi|^{\lfloor k/2\rfloor}\sum_{y\bmod \varpi}\psi\left(\frac{-\overline{a_1}y^2n}{\varpi}\right).\]
This sum is a Gauss sum, and is of norm $|\varpi|^{1/2}$ unless $\varpi|n$, which we assumed not to be the case at the beginning of this proof. Therefore, when $k$ is odd, summing over $a_2$ contributes a factor of $|\varpi|^{k/2}$ as well. Since $\varpi\nmid mn$, the congruence above has at most $2$ solutions $a_1$ modulo $\varpi^{\lfloor k/2\rfloor}$. Putting these together, the conclusion follows.\\
\\
For the case of Sali\'e sums, that is $\theta=1$, the proof is similar. See Lemmas 12.2 and 12.3 of~\cite{MR2061214}.
\end{proof}
\begin{proof}[Proof of Proposition~\ref{lem:S1bound}]
Recall~\eqref{SrG} and its notation. We make some general reductions of~\eqref{SrG} and then specialize to~\eqref{S1} to obtain the desired bound on $S_1$. In calculating $S_r(G,\vec{c},\vec{c}',e)$, we may assume without loss of generality that $G$ is a diagonal matrix modulo $r$, that is,
\[G(\vec{x})=\sum_{i=1}^d\alpha_ix_i^2\]
for some $\alpha_i\in\mathcal{O}$. This is possible by Corollary~\ref{cor:diag} with $R=\mathcal{O}$ and $I=(r)$. Consequently,
\[S_r(G,\vec{c},\vec{c}',e)=\sumstar_{|a|<|r|} \psi\left(\frac{ae}{r}\right)  \prod_{j=1}^{d} \sum_{b\in\mathcal{O}/(r)} \psi\left(\frac{a(\alpha_jb^2+c_j^{\prime}b)-c_jb}{r} \right).\]
We complete the square to obtain
\begin{eqnarray*}S_r(G,\vec{c},\vec{c}',e)&=&\sumstar_{|a|<|r|} \psi\left(\frac{ae}{r}\right)\prod_{j=1}^{d} \sum_{b\in\mathcal{O}/(r)} \psi\left(\frac{a\alpha_j\left(b+\overline{2a\alpha_j}(ac_j'-c_j)\right)^2-\overline{4a\alpha_j}(ac_j^{\prime}-c_j)^2)}{r}\right)
\\
 &=&\sumstar_{|a|<|r|} \psi\left(\frac{ae}{r}\right)\prod_{j=1}^{d} \psi\left(\frac{-\overline{4a\alpha_j}(ac_j^{\prime}-c_j)^2)}{r}\right)\sum_{b\in\mathcal{O}/(r)} \psi\left(\frac{a\alpha_j\left(b+\overline{2a\alpha_j}(ac_j'-c_j)\right)^2}{r}\right)  \end{eqnarray*}
The internal sum is equal to $\left(\frac{a\alpha_j}{r}\right)\tau_r$, where $\left(\frac{a\alpha_j}{r}\right)$ is the Jacobi symbol and $\tau_r:=\sum_{x\in\mathcal{O}/(r)}\psi\left(\frac{x^2}{r}\right)$ is a Gauss sum. Therefore,
\[S_r(G,\vec{c},\vec{c}',e)=\tau_r^d\left(\frac{D}{r}\right)\psi\left(\frac{\sum_{j}\overline{2\alpha_j}c_jc_j'}{r}\right){\sum_{|a|<|r|}}^* \left(\frac{a}{r} \right)^d \psi\left(\frac{a(e-\sum_{j}\overline{4\alpha_j}c_j^{\prime 2})-\bar{a}\sum_{j}\overline{4\alpha_j}c_j^2}{r}  \right).\]
Note that $|\tau_r^d|=|r|^{d/2}$. In this formula, $\left(\frac{D}{r}\right)$ and $\left(\frac{a}{r} \right)$ are Jacobi symbols. In light of Lemma~\ref{lem:multiplicative}, we proceed to bound $S_{\varpi^k}(G,\vec{c},\vec{c}',e)$ for $k\geq 1$ and $\varpi\in\mathcal{O}$ irreducible. It suffices to bound the sums
\[\sumstar_{|a|<|\varpi^k|} \left(\frac{a}{\varpi^k} \right)^d \psi\left(\frac{a(e-\sum_{j}\overline{4\alpha_j}c_j^{\prime 2})-\bar{a}\sum_{j}\overline{4\alpha_j}c_j^2}{\varpi^k}  \right).\]
Specializing now to $S_1$ as in~\eqref{S1}, we take $\varpi^k\|r_1$, $G=(gr_2)^2F$, $\vec{c}'=2r_2A\boldsymbol{\lambda}$, and $e=-r_2k_1$. In this case,
\[e-\sum_{j}\overline{4\alpha_j}c_j^{\prime 2}\equiv -r_2k_1-F(\boldsymbol{\lambda})\bar{g}^2\equiv (gr_1k_2-f)\bar{g}^2\equiv -f\bar{g}^2\bmod \varpi^k.\]
Making this substitution and changing $a$ to $ag^2$, we obtain
\[\sumstar_{|a|<|\varpi^k|} \left(\frac{a}{\varpi^k} \right)^d \psi\left(\frac{-af-\bar{a}\overline{g}^4\sum_{j}\overline{4\alpha_j}c_j^2}{\varpi^k}  \right).\]
Applying Lemma~\ref{Weilbound}, we obtain
\[|S_{\varpi^k}(G,\vec{c},\vec{c}',e)|=\left|\tau_{\varpi^k}^d\sumstar_{|a|<|\varpi^k|} \left(\frac{a}{\varpi^k} \right)^d \psi\left(\frac{-af-\bar{a}\overline{g}^4\sum_{j}\overline{4\alpha_j}c_j^2}{\varpi^k}  \right)\right|\leq \tau(\varpi^k)|\varpi^k|^{\frac{d+1}{2}}|\gcd(f,\varpi^k)|^{1/2}.\]
Applying Lemmas~\ref{lem:multiplicative}, we obtain for every $r_1$
\[|S_1|\leq \tau(r_1)|r_1|^{\frac{d+1}{2}}|\gcd(r_1,f)|^{1/2}.\]
This concludes the proof of the first part of Proposition~\ref{lem:S1bound}.
\\
\\
 We now bound $S_2$ from above. The proof uses the Cauchy--Schwarz inequality. Recall that
\[S_2:=|g|\sumstar_{|a_2|<|r_2|}\sum_{\substack{\vec{b}_2\in \mathcal{O}^d/(gr_2)\\ g|2\boldsymbol{\lambda}^{\intercal}A\vec{b}_2-k\overline{r_1}}}  \psi\left(\frac{2r_1a_2\boldsymbol{\lambda}^{\intercal}A\vec{b}_2+a_2gr_1^2F(\vec{b}_2)-\left<\vec{c},\vec{b}_2\right>-r_1a_2k_2}{gr_2}\right).\]
Applying the Cauchy--Schwarz inequality to the $a_2$ variable, we obtain
\begin{eqnarray*}
&&|S_2|^2\\&\leq& |g|^2\varphi(r_2)\sumstar_{|a_2|<|r_2|}\left|\sum_{\substack{\vec{b}_2\in \mathcal{O}^d/(gr_2)\\ g|2\boldsymbol{\lambda}^{\intercal}A\vec{b}-k\overline{r_1}}}  \psi\left(\frac{2r_1a_2\boldsymbol{\lambda}^{\intercal}A\vec{b}_2+a_2gr_1^2F(\vec{b}_2)-\left<\vec{c},\vec{b}_2\right>-r_1a_2k_2}{gr_2}\right)\right|^2\\ &=& |g|^2\varphi(r_2)\sumstar_{|a_2|<|r_2|}\sum_{\substack{\vec{b}_2,\vec{b}_2'\in \mathcal{O}^d/(gr_2)\\ g|2\boldsymbol{\lambda}^{\intercal}A\vec{b}_2-k\overline{r_1}, g|2\boldsymbol{\lambda}^{\intercal}A\vec{b}_2'-k\overline{r_1}}}  \psi\left(\frac{2r_1a_2\boldsymbol{\lambda}^{\intercal}A(\vec{b}_2-\vec{b}_2')+a_2gr_1^2(F(\vec{b}_2)-F(\vec{b}_2'))-\left<\vec{c},\vec{b}_2-\vec{b}_2'\right>}{gr_2}\right)
\end{eqnarray*}
Making the substitution $\vec{u}=\vec{b}_2-\vec{b}_2'$, we obtain
\[|S_2|^2\leq |g|^2\varphi(r_2)\sumstar_{|a_2|<|r_2|}\sum_{\substack{\vec{b}_2,\vec{u}\in \mathcal{O}^d/(gr_2)\\ g|2\boldsymbol{\lambda}^{\intercal}A\vec{b}_2-k\overline{r_1}, g|2\boldsymbol{\lambda}^{\intercal}A\vec{u}}}  \psi\left(\frac{2r_1a_2\boldsymbol{\lambda}^{\intercal}A\vec{u}+a_2gr_1^2(2\vec{b}_2^{\intercal}A\vec{u}+F(\vec{u}))-\left<\vec{c},\vec{u}\right>}{gr_2}\right).\]
The sum over $\vec{b}_2$ is zero unless $r_2|\Delta\gcd(\vec{u})$, where $\gcd(\vec{u}):=\gcd(u_1,\dots,u_d).$ This implies that the summation is non-zero only if
\[\vec{u}\in(r_2\mathcal{O}/(\gcd(\Delta,r_2)gr_2))^d\simeq (\mathcal{O}/(\gcd(\Delta,r_2)g))^d.\]
Hence,
\begin{eqnarray*}|S_2|^2&\leq& |g|^2\varphi(r_2)\sumstar_{|a_2|<|r_2|}\sum_{\substack{\vec{b}_2\in \mathcal{O}^d/(gr_2)\\g|2\boldsymbol{\lambda}^{\intercal}A\vec{b}_2-k\overline{r_1}}}\sum_{\substack{\vec{u}\in (\mathcal{O}/(\gcd(\Delta,r_2)g))^d\\ g|2\boldsymbol{\lambda}^{\intercal}A\vec{u}}}1\\&\ll_{\Delta}& |g|^{2d}\varphi(r_2)^2|r_2|^d\\&\ll_{\Delta}& |g|^{2d}|r_2|^{d+2}.\end{eqnarray*}
Taking square roots, we obtain
\[|S_2|\ll_{\Delta}|g|^{d}|r_2|^{\frac{d}{2}+1},\]
as required.
\end{proof}
We now put together the above results to prove Proposition~\ref{prop:Supperbound}.
\begin{proof}[Proof of Proposition~\ref{prop:Supperbound}] 
As before, write $r=r_1r_2$, where $\gcd(r_1,g\Delta)=1$ and the prime divisors of $r_2$ are among those of $g\Delta$. By construction, we know that $|S_{g,r}(\vec{c})|=|S_1||S_2|$. Therefore, from Proposition~\ref{lem:S1bound}, we have
\begin{eqnarray*}&&\sum_{\substack{r\in\mathcal{O}\\|r|<\hat{X}}}|g|^{-d}|r|^{-\frac{d+1}{2}}|S_{g,r}(\vec{c})|\\&\ll_{\Delta}& \sum_{\substack{r\in\mathcal{O}\\|r|<\hat{X}}}\tau(r_1)|r_2|^{1/2}|\gcd(r_1,f)|^{1/2}\\ &\leq& \hat{X}^{\varepsilon}\sum_{\substack{r\in\mathcal{O}\\|r|<\hat{X}}}|r_2|^{1/2}|\gcd(r_1,f)|^{1/2}\\&=&\hat{X}^{\varepsilon}\sum_{\substack{r_1\in\mathcal{O}\\|r_1|<\hat{X}}}|\gcd(r_1,f)|^{1/2}\sum_{\substack{r_2\in\mathcal{O}\\|r_2|<\hat{X}/|r_1|}}|r_2|^{1/2}.
\end{eqnarray*}
The second (internal) sum can be bounded using
\[\sum_{\substack{r_2\in\mathcal{O}\\|r_2|<\hat{X}/|r_1|}}|r_2|^{1/2}\leq\sum_{d|(g\Delta)^{\infty}, |d|<\hat{X}/|r_1|} |d|^{1/2} \frac{\hat{X}}{|r_1d|} \leq \hat{X}/|r_1|\sum_{d|(g\Delta)^{\infty}, |d|<\hat{X}/|r_1|} 1
 \ll \frac{\hat{X}}{|r_1|}  |g\Delta|^{\varepsilon}\hat{X}^{\varepsilon}.\]
Hence,
\[\sum_{\substack{r_1\in\mathcal{O}\\|r_1|<\hat{X}}}|\gcd(r_1,f)|^{1/2}\sum_{\substack{r_2\in\mathcal{O}\\|r_2|<\hat{X}/|r_1|}}|r_2|^{1/2}\ll \hat{X}|g\Delta|^{\varepsilon}\hat{X}^{\varepsilon}\sum_{\substack{r_1\in\mathcal{O}\\|r_1|<\hat{X}}}\frac{|\gcd(r_1,f)|^{1/2}}{|r_1|},\]
from which the conclusion follows since this latter sum is $\ll \hat{X}^{\varepsilon}$.
\end{proof}

\section{Analytic functions on $\TT^d$}\label{analyticsection}
In order to prove our main theorem, it turns out that we need to do analysis not just using polynomials over $\Ki$, but also using  convergent Taylor series. We begin by defining a space of analytic functions defined on $\TT^d$  that extends the space of polynomials. Let  $\cO_{\infty}:=\{x\in K_{\infty}: |\alpha|\leq 1  \}.$ Define
 \[C^{\omega}(\TT^d):=\left\{\sum_{(n_1,\dots,n_d)\in \mathbb{Z}_{\geq 0}^d}a_{(n_1,\dots,n_d)} x_1^{n_1}\dots x_d^{n_d}: a_{(n_1,\dots,n_d)} \in \cO_{\infty} \right\}. \]
It is easy to see that the above Taylor expansions are convergent for $(u_1,\dots,u_d)\in \TT^d.$ 
 When $d=1$, aside from polynomials in $\cO_{\infty}[x]$, examples of analytic functions on $\TT$ are
\[\frac{1}{1-x}:=\sum_{k=0}^{\infty}x^k,\]
and
\[(1+x)^{1/2}:=\sum_{k=0}^{\infty}\binom{1/2}{k}x^k.\]
This square root function is defined since the base characteristic is odd. We define the partial derivatives $\frac{\partial}{\partial x_i}$ for $1 \leq i\leq d$ on $C^{\omega}(\TT^d)$ to be the formal derivation  operator which acts on the monomials as  $\frac{\partial}{\partial x_i} x_1^{n_1}\dots x_d^{n_d}=n_i  x_1^{n_1}\dots x_i^{n_i-1}\dots x_d^{n_d}$ and extend them by linearity to power series. It is easy to check that it sends $C^{\omega}(\TT^d)$  to itself. A point $\vec{a}\in\TT^d$ is said to be a \textit{critical} point of $\phi\in C^{\omega}(\TT^d)$ if all partial derivatives at $\vec{a}$ are zero. The \textit{Hessian} of $\phi$ is given by the matrix
\[H_{\phi}:=\left[\frac{\partial^2\phi}{\partial x_i\partial x_j}\right]_{1\leq i,j\leq d}.\]
Define the space
\[
C^{\omega}(\TT^m,\TT^n):= \{\Phi=(\phi_1,\dots,\phi_n): \phi_j\in C^{\omega}(\TT^m) \text{ and }\phi_j(\vec{0})\in\TT  \}.
\] 
For $\Phi \in C^{\omega}(\TT^m,\TT^n)$ define the Jacobian matrix
\(
J\Phi:=\begin{bmatrix}   
\frac{\partial \phi_i}{\partial x_j} 
\end{bmatrix}_{i,j},
\)
 where $1\leq i\leq n$  and $1\leq j\leq m.$ For $m=n$ define the Jacobian determinant to be $\det (J\Phi).$ We also have the following change of variables formula, which readily follows from Igusa \cite[Proposition 7.4.1]{igusa}.

\begin{lemma}\label{lem:change}
Suppose $\Phi\in C^{\omega}(\TT^d,\TT^d)$, and suppose that $\Phi^{-1}$ exists and is also in $C^{\omega}(\TT^d,\TT^d)$. Then for every integrable $f:\TT^d\rightarrow\mathbb{C}$,
\[\int_{\TT^d}f(\vec{u})d\vec{u}=\int_{\TT^d} f(\Phi\vec{v}) |\det(J\Phi(\vec{v}))|d\vec{v}.\]
\end{lemma}

\subsection{The analytic automorphisms of $\TT^d$ }
In this section, we define the group of the analytic automorphisms of $\TT^d.$ We use this group in order to simplify and reduce the computations of our oscillatory integrals into Gaussian integrals. Recall that by  Schwarz's Lemma the analytic automorphisms  of the disk in the complex plane  which fixes the origin are just rotations. Unlike the disk in the complex plane the group of analytic automorphisms of the disk $\TT^d$ is enormous.  Define
\[
\cA_{\infty}(\TT^d):=\big\{\Phi\in C^{\omega}(\TT^d,\TT^d): |\det (J\Phi(\vec{0})) |=1,  \text{ and }  \Phi(\vec{0})=0   \big\}.
\]
\begin{proposition}\label{auto}
$\cA_{\infty}(\TT^d)$ is a group under the composition of functions and it preserves the Haar measure on $\TT^d$. 
\end{proposition}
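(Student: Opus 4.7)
The plan is to verify the group axioms for $\cA_{\infty}(\TT^d)$ under composition and then derive Haar measure invariance by reducing to the linear change-of-variables formula (Lemma~\ref{lem:change}) on small ultrametric balls. First I would check closure under composition: if $\Phi,\Psi\in\cA_{\infty}(\TT^d)$, then $\Psi$ maps $\TT^d$ into $\TT^d$, so $\Phi\circ\Psi$ makes sense as a convergent power series whose coefficients are $\ZZ$-polynomial combinations of the coefficients of $\Phi$ and $\Psi$, hence still lie in $\cO_{\infty}$. We have $(\Phi\circ\Psi)(0)=0$, and by the chain rule $J(\Phi\circ\Psi)(0)=J\Phi(0)\,J\Psi(0)$, whose determinant has unit norm. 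Associativity and the identity are immediate.

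The main obstacle is inversion, which requires a non-archimedean version of the inverse function theorem. Given $\Phi\in\cA_{\infty}(\TT^d)$, set $L:=J\Phi(0)$; since $|\det L|=1$, $L\in\mathrm{GL}_d(\cO_{\infty})$ with $L^{-1}$ also having entries in $\cO_{\infty}$. Replacing $\Phi$ by $L^{-1}\circ\Phi$ reduces to the case $J\Phi(0)=I$, so $\Phi(x)=x+R(x)$ with $R$ having no constant or linear terms and coefficients in $\cO_{\infty}$. I would then construct the unique formal series $\Psi(x)=x+S(x)$ solving $\Psi(x)+R(\Psi(x))=x$ by equating homogeneous components degree by degree. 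The recursion expresses every coefficient of $S$ as a $\ZZ$-polynomial in those of $R$, so these coefficients remain in $\cO_{\infty}$. Convergence on $\TT^d$ is then automatic, since any power series with $\cO_{\infty}$ coefficients converges on $\TT^d$ in the ultrametric norm. The same argument run in reverse produces a right inverse, forcing $\Psi$ to be two-sided.

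For the measure claim, I would first upgrade the Jacobian condition from the origin to all of $\TT^d$: since $\det J\Phi(x)$ has $\cO_{\infty}$ coefficients and vanishing of its nonconstant part at $0$ forces $\det J\Phi(x)-\det J\Phi(0)\in\TT$ for every $x\in\TT^d$, the ultrametric inequality gives $|\det J\Phi(x)|=1$ everywhere on $\TT^d$. Then, given a large integer $N$, I would tile $\TT^d$ by cosets $x_0+\hat{N}^{-1}\TT^d$ and use the Taylor expansion
\[
\Phi(x_0+h)=\Phi(x_0)+J\Phi(x_0)\,h+R_{x_0}(h),
\]
where $R_{x_0}(h)$ is a convergent series starting in quadratic terms in $h$, so $|R_{x_0}(h)|\leq \hat{N}^{-2}$ when $|h|\leq \hat{N}^{-1}$. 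Because $J\Phi(x_0)\in\mathrm{GL}_d(\cO_{\infty})$ has unit-norm determinant, $J\Phi(x_0)$ maps $\hat{N}^{-1}\TT^d$ bijectively onto itself, and the ultrametric inequality makes the quadratic error absorbed: $\Phi$ sends $x_0+\hat{N}^{-1}\TT^d$ bijectively onto $\Phi(x_0)+\hat{N}^{-1}\TT^d$. By Lemma~\ref{lem:change} applied to the linear piece $J\Phi(x_0)$, these two balls carry equal Haar measure. Summing over the tiling, the pushforward of Haar measure under $\Phi$ agrees with Haar measure on every ball, and hence on every measurable set by standard approximation.
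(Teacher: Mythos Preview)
Your proof is correct and follows essentially the same approach as the paper: closure via the chain rule, inversion by first reducing to $J\Phi(0)=I$ and then solving recursively for the coefficients of the inverse series (with the coefficients lying in $\cO_\infty$ because they are $\ZZ$-polynomial combinations of the original ones), and measure preservation via the observation that $|\det J\Phi(\vec{x})|=1$ for all $\vec{x}\in\TT^d$. The only difference is in the last step: the paper simply records that $|\det J\Phi(\vec{x})|=1$ everywhere and stops, implicitly invoking the general non-archimedean change-of-variables formula from Igusa (of which Lemma~\ref{lem:change} is the linear case), whereas you spell out a tiling argument that reduces to the linear case by localizing $\Phi$ on small balls. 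Your version is more self-contained given that Lemma~\ref{lem:change} is only stated for $M\in\mathrm{GL}_n(\Ki)$; one small point worth making explicit is that the bijectivity of $\Phi$ restricted to each ball $x_0+\hat N^{-1}\TT^d$ follows cleanly once you already have the global inverse $\Psi\in\cA_\infty(\TT^d)$, since $\Psi$ likewise carries $\Phi(x_0)+\hat N^{-1}\TT^d$ back into $x_0+\hat N^{-1}\TT^d$.
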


\begin{proof}
By the product rule of the Jacobian it is easy to see that $\cA_{\infty}(\TT^d)$ is closed under the composition of functions.  The identity function is the identity element of $\cA_{\infty}(\TT^d).$ It is enough to construct the inverse of $\Phi \in \cA_{\infty}(\TT^d).$ We prove the existence of the inverse by solving a  recursive system of linear equations.
\\

 First,  we explain it when  $d=1.$ We have $\Phi=\sum_{i=1}^{\infty} a_ix^i$ with $|a_1|=|\det(J\Phi(\vec{0}))|=1$.  Let $\bar{\Psi}(x):=a_1^{-1}x.$    We note that 
$J(\bar{\Psi} \circ \Phi)(\vec{0})= 1.$ Without loss of the generality, we assume that $a_1= 1.$ We wish to find $\Psi=\sum_{i=1}^{\infty} b_ix^i \in \cA_{\infty}(\TT)$ such that $ \Psi \circ \Phi(x) =x. $ This implies that $b_1=1,$ and we obtain the following system of linear equations in $(b_n)_{n\geq 2}$ by equating the $x^n$ coefficients of $\Psi \circ \Phi(x) $ for $n\geq 2:$
\[
b_n  +\sum_{i=1}^{n-1} b_i (\text{some polynomial in $a_1,\dots ,a_{n-i+1}$})=0.
\]
The above system of linear equations has a unique solution  $(b_n)_{n\geq 2} \in \cO_{\infty},$ by recursively finding  $b_n.$  
\\

For general $d$, suppose that \( \Phi:=(\phi_1(x_1,\dots x_d),\dots,\phi_d(x_1,\dots,x_d)) \in \cA_{\infty}(\TT^d) \).  By the definition of $\cA_{\infty}(\TT^d),$ we have $ J\Phi(\vec{0}) \in \GL_d(\cO_{\infty}).$ Let $\bar{\Psi}:=\left(J\Phi(\vec{0})\right)^{-1}\in \GL_d(\cO_{\infty}). $  We note that 
$J(\bar{\Psi} \circ \Phi)(\vec{0})= I_{d\times d}.$ Without loss of the generality, we assume that $J\Phi(\vec{0})= I_{d\times d}.$
We wish to find  \( \Psi:=(\psi_1(x_1,\dots x_d),\dots,\psi_d(x_1,\dots,x_d))\in \cA_{\infty}(\TT^d)  \) such that 
\[\psi_i\big(\phi_1(x_1,\dots,x_d), \dots,\phi_d(x_1,\dots,x_d) \big)= x_i\]
for every $1\leq i\leq d.$ Suppose that 
\begin{equation*}
\begin{split}
 \phi_i:=\sum_{(n_1,\dots,n_d)\in \mathbb{Z}_{\geq 0}^d}a_{i,(n_1,\dots,n_d)} x_1^{n_1}\dots x_d^{n_d},
\\
 \psi_i:=\sum_{(n_1,\dots,n_d)\in\mathbb{Z}_{\geq 0}^d}b_{i,(n_1,\dots,n_d)} x_1^{n_1}\dots x_d^{n_d},
 \end{split}
 \end{equation*}
 where $1\leq i\leq d.$ Let $|(n_1,\dots,n_d)|:= \sum_{i=1}^dn_i.$ For  $(n_1,\dots,n_d)\in \NNz^d,$ with $|(n_1,\dots,n_d)| \geq 2,$ we have 
 \begin{equation}
0= b_{i,(n_1,\dots,n_d)}+ \sum_{\substack{{(m_1,\dots,m_d)< (n_1,\dots,n_d)}}}   b_{i,(m_1,\dots,m_d)} (\text{some polynomial in $a_{j,(k_1,\dots,k_d)}$}),
 \end{equation}
 where $|(k_1,\dots,k_d)|\leq |(n_1,\dots,n_d)|$, and $(m_1,\dots,m_d)< (n_1,\dots,n_d)$ means that $m_i\leq n_i$ for $1\leq i\leq d$ and $(m_1,\dots,m_d)\neq (n_1,\dots,n_d).$ Similarly, the above system of recursive linear equations has a unique solution where $b_{i,(n_1,\dots,n_d)} \in \cO_{\infty}.$\\
\\
Finally, we check that $\Phi \in \cA_{\infty}(\TT^d)$  preserves the Haar measure on $\TT^d.$ By the definition of $\cA_{\infty}(\TT^d),$ we have $ |\det (J\Phi(\vec{0}))|=1.$ This implies that $|\det (J\Phi(\vec{x}))|=1$ for every $\vec{x}\in \TT^d.$  By Lemma~\ref{lem:change}, $\Phi$ preserves the Haar measure. This completes the proof of our proposition. 
\end{proof}
Next, we prove a version of the Morse lemma for functions in $C^{\omega}(\TT^d).$
\begin{proposition}[Morse lemma over $\Ki$]\label{prop:morse} Assume that $\phi\in C^{\omega}(\TT^d)$ with a  critical point at $0$. Suppose furthermore that the Hessian $H_{\phi}$ satisfies $|\det(H_{\phi}(\vec{0}))|=1.$  Then there exists $\Psi\in \cA_{\infty}(\TT^d)$ with $J\Psi(\vec{0})=I_{d\times d}$ such that  
\[
\phi(\vec{x})=\phi(\vec{0})+\frac{1}{2}\Psi(\vec{x})^{\intercal}H_{\phi}(\vec{0})\Psi(\vec{x})
\]
for every $\vec{x}\in \TT^d,$ where $\Psi(\vec{x})^{\intercal}$ is the transpose of the column vector $\Psi(\vec{x})$.
\end{proposition}
\begin{proof} We write $\vec{x}=(x_1,\hdots,x_d)$ in this proof. By Lemma~\ref{diag} there exists a matrix $g\in \GL_d(\cO_{\infty})$ such that $g^{\intercal} H_{\phi}(\vec{0})g=\text{diag}(\lambda_1,\dots,\lambda_d).$ Since $H_{\phi}(\vec{0}) \in \GL_d(\cO_{\infty}),$    $\lambda_i \in \cO_{\infty}$ and $|\lambda_i|=1.$ By changing the variables with $g$, we may assume without loss of generality that $H_{\phi}(\vec{0})$ is a diagonal matrix. We proceed by induction. Our induction hypothesis is that if for some $d'\leq d$
\[
 \phi(\vec{x})= \phi(\vec{0})+\sum_{1 \leq i\leq  j\leq d'}x_ix_j(\delta_{i,j}\frac{\lambda_i}{2}+h_{i,j}(\vec{x}))
\]
with   $h_{i,j}\in C^{\omega}(\TT^{d})$ and $h_{i,j}(\vec{0})=0,$  then
\begin{equation}\label{induc}
\phi(\vec{x})=\phi(\vec{0})+\sum_{1\leq j \leq d' }\frac{\lambda_j}{2} \psi_j^2(\vec{x}),
\end{equation}
where $\psi_{j}(\vec{x})=x_j+h_j(\vec{x})$ with $h_j(\vec{0})=0$ and $h_j(\vec{x})$ having a critical point at 0. We induct on $d'.$ First, we prove the induction hypothesis  for $d'=1.$  We have
\[
 \phi(\vec{x})=  \phi(\vec{0})+ x_1^2\left(\frac{\lambda_1}{2}+h_{1,1}(\vec{x})\right).
\]
Let 
\[
\psi_1(\vec{x}):=x_1\big(1+2\lambda_1^{-1}h_{1,1}\big)^{1/2}\in  \cA_{\infty}(\TT),
\]
where we use the Taylor expansion $(1+x)^{1/2}=\sum_{k=0}^{\infty}\binom{1/2}{k}x^k.$ It is easy to check that 
\[
\phi(\vec{x})= \phi(\vec{0})+\frac{\lambda_1}{2} \psi_1^2(\vec{x}).
\] This completes the proof of the induction hypothesis for $d'=1.$
 \\
\\

Suppose that the induction hypothesis holds for $d'-1.$ We show it for $d'.$ We rewrite~\eqref{induc} as
\begin{multline*}
 \phi(\vec{x})= \phi(\vec{0})+ x_{d'}^2(\frac{\lambda_{d'}}{2}+h_{d',d'}(\vec{x}))+ \sum_{1\leq j  \leq d'-1  }x_{d'}x_j h_{j,d'}(\vec{x}) 
\\
  +\sum_{1\leq i\leq j \leq d'-1 }x_ix_j(\delta_{i,j}\frac{\lambda_i}{2}+h_{i,j}(\vec{x})).
\end{multline*}
Define 
\[
\psi_{d'}:=x_{d'}\big(1+2\lambda_{d'}^{-1}h_{d',d'}(\vec{x})\big)^{1/2}+\big(\lambda_{d'}^{-1}\sum_{1 \leq j \leq d'-1 }x_j h_{j,d'}(\vec{x}) \big)\big(1+2\lambda_{d'}^{-1}h_{d',d'}(\vec{x})\big)^{-1/2}.
\] We have 
\begin{equation}
\begin{split}
\phi (\vec{x})&=\phi(\vec{0})+\frac{\lambda_{d'}}{2} \psi_{d'}(\vec{x})^2+\sum_{1\leq i,j\leq d'-1 }x_ix_j(\delta_{i,j}\lambda_i+h^{\prime}_{i,j}(\vec{x})),\\
\end{split}
\end{equation}
for some $h^{\prime}_{i,j}(\vec{x})\in C^{\omega}(\TT^d),$ where $h_{i,j}(\vec{0})=0.$ By the induction hypothesis for $d'-1$, we have   
\[
\phi(\vec{x})=\phi(\vec{0})+\sum_{1\leq j \leq d' }\frac{\lambda_j}{2} \psi_j^2(\vec{x}),
\]
where $\psi_{j}=x_j+h_j(\vec{x})$ with $h_j(\vec{0})=0$ and $h_j(\vec{x})$ having a critical point at 0. Taking $d'=d$ and $\Psi(\vec{x})=[\psi_1(\vec{x}),\hdots,\psi_d(\vec{x})]^{\intercal}$ concludes our proof. 
\end{proof}
\begin{remark}Proposition~\ref{prop:morse} is true for every $\vec{x}\in\TT^d$, and so it is a global statement in contrast to the usual Morse lemma. In particular, it shows that $\phi$ has $0$ as a single critical point in $\TT^d$. Consequently, only one critical point plays a role in the stationary phase theorem that we prove in Proposition~\ref{station}.
\end{remark}
\subsection{Stationary phase theorem over function fields}
In this section, we prove a version  of the stationary phase theorem in the function fields setting that we use for computing the oscillatory integrals $I_{g,r}(\vec{c})$. The proof is similar in spirit to that of the classical stationary phase theorem in real analysis. See~\cite[Prop. 6, p. 344]{Stein}.\\
\\
Let $f\in K_{\infty}$ and define  
\begin{equation}\label{epsfact}
\cG(f):=\begin{cases}\min(|f|^{-1/2},1)   &\text{ if } \deg(f)\text{ is } even,
\\
|f|^{-1/2} \varepsilon_{f} &\text{ if } \deg(f)\geq 1\text{ and is } odd,
\\
1 &\text{ otherwise,}
\end{cases}
\end{equation}
where $\varepsilon_{f}:=\frac{G(f)}{|G(f)|}$ and  $G(f):=\sum_{x\in \mathbb{F}_q} e_q(a_f x^2)$ is the Gauss sum associated to $a_f$, the top degree coefficient of $f.$ Suppose that $\phi\in C^{\omega}(\TT^d)$ has a critical point at $0$ with the Hessian $H_{\phi}(\vec{0})$, where $|\det(H_{\phi}(\vec{0}))|=1.$
\begin{proposition}\label{station}
Suppose the above assumptions on $\phi$ and $f.$ We have
\[
\int_{\TT^d}\psi(f\phi(\vec{u}))d\vec{u}=\psi(f \phi(\vec{0})) \prod_{i=1}^{d}\cG (f\lambda_i/2),
\]
where $\lambda_i \in \cO_{\infty}$ for $1 \leq i\leq d$ are diagonal elements of a diagonal matrix $g^{\intercal}H_{\phi}(\vec{0})g$ for some $g\in \GL_d(\cO_{\infty})$ ($H_{\phi}(\vec{0})$ is diagonalizable in such a way by Lemma~\ref{diag}).
\end{proposition}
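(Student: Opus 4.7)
The plan is to reduce the $d$-dimensional oscillatory integral to a product of univariate Gaussian integrals via the analytic Morse lemma, and then to verify the one-variable identity $\int_{\TT}\psi(cv^2)\,dv=\cG(c)$ by an explicit case analysis on $\ord(c)$.

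First, I would apply Lemma~\ref{diag} to find $g\in GL_d(\cO_\infty)$ with $g^\intercal H_\phi(0)g=D[\lambda_1,\dots,\lambda_d]$. Since $g,g^{-1}\in M_d(\cO_\infty)$ forces $|\det g|_\infty=1$, the linear substitution $\vec{u}=g\vec{u}'$ is a Haar-measure-preserving bijection of $\TT^d$ that replaces $\phi$ by a function with diagonal Hessian $D[\lambda_1,\dots,\lambda_d]$ at $0$, so I may assume from the start that $H_\phi(0)$ is already diagonal. Proposition~\ref{prop:morse} then supplies $\Psi=(\psi_1,\dots,\psi_d)\in\cA_\infty(\TT^d)$ with $J\Psi(0)=I$ such that
\[
\phi(\vec{u})=\phi(0)+\sum_{i=1}^d\lambda_i\psi_i(\vec{u})^2.
\]
Using the change of variables $\vec{v}=\Psi(\vec{u})$, which preserves Haar measure on $\TT^d$ by Proposition~\ref{auto}, the integral factors as
\[
\int_{\TT^d}\psi(f\phi(\vec{u}))\,d\vec{u}=\psi(f\phi(0))\prod_{i=1}^d\int_{\TT}\psi(f\lambda_i v^2)\,dv,
\]
reducing everything to the one-variable identity $\int_{\TT}\psi(cv^2)\,dv=\cG(c)$ for arbitrary $c\in K_\infty$.

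If $\ord(c)\leq 0$, then $|cv^2|_\infty\leq q^{-2}$ for every $v\in\TT$, so the coefficient of $t^{-1}$ in $cv^2$ vanishes, $\psi(cv^2)=1$, and the integral equals $1$, matching $\cG(c)$ in each of the three relevant sub-cases. If $\ord(c)=n\geq 1$, write $n=2m+r$ with $r\in\{0,1\}$ and substitute $v=t^{-m-r}y$ (Jacobian $q^{-m-r}$) to obtain
\[
\int_{\TT}\psi(cv^2)\,dv=q^{-m-r}\int_{|y|_\infty<q^{m+r}}\psi(\alpha t^{-r}y^2)\,dy,\qquad \alpha:=ct^{-2m-r}\in\cO_\infty^*.
\]
Write $y=p+w$ with $p$ ranging over polynomial representatives of degree at most $m+r-1$ and $w\in\TT$, so that $y^2=p^2+2pw+w^2$. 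Since $|\alpha t^{-r}w^2|_\infty\leq q^{-r-2}<1$, we have $\psi(\alpha t^{-r}w^2)=1$, and by Lemma~\ref{lem:orthog} the cross-term integral $\int_{\TT}\psi(2\alpha t^{-r}pw)\,dw$ vanishes unless $|p|_\infty<q^r$.

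For $r=0$ the restriction forces $p=0$ and the integral collapses to $q^{-m}=|c|_\infty^{-1/2}$, matching the even-order case of $\cG$. For $r=1$ the restriction leaves $p$ ranging over $\FF_q$, and the remaining finite sum becomes
\[
\sum_{p_0\in\FF_q}\psi(\alpha t^{-1}p_0^2)=\sum_{p_0\in\FF_q}e_q(a_c p_0^2)=G(a_c),
\]
because the constant term of $\alpha$ is $a_c$ (the top-degree coefficient of $c$), so the $t^{-1}$-coefficient of $\alpha t^{-1}p_0^2$ equals $a_cp_0^2$. Thus the integral equals $q^{-m-1}G(a_c)=|c|_\infty^{-1/2}\varepsilon_c$, matching the odd-order case of $\cG$. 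I expect the main delicacy to be keeping the normalizations of the substitution $v=t^{-m-r}y$ in exact correspondence with the definition of $\cG$ and correctly identifying $a_c$ as the coefficient entering $\varepsilon_c$; conceptually the computation is the function-field analogue of the standard $p$-adic Gaussian integral and poses no deeper obstacle.
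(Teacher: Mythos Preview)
Your proposal is correct and follows essentially the same approach as the paper: reduce via the Morse lemma (Proposition~\ref{prop:morse}) and the measure-preserving property of $\cA_\infty(\TT^d)$ (Proposition~\ref{auto}) to a Gaussian integral, diagonalize, and compute the one-variable integral $\int_{\TT}\psi(cv^2)\,dv=\cG(c)$. The only cosmetic difference is that the paper packages the diagonalization step inside a separate lemma (Lemma~\ref{gauint}) and computes the one-variable Gaussian (Lemma~\ref{ggg}) by partitioning $\TT$ into cosets of $t^{-k}\TT$ rather than by your substitution $v=t^{-m-r}y$; these are equivalent arguments.
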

We first prove a special case of Proposition~\ref{station} for quadratic polynomials. 

\subsubsection{Gaussian integrals over function field }
We explicitly compute the analogue of Gaussian integrals over $\Ki$. 
\begin{lemma}\label{ggg} For every $f\in K_{\infty},$
we have 
\[
\int_{\TT} \psi(fu^2) du=\cG(f).
\]

\end{lemma}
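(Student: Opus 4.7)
The plan is to compute $\int_\TT\psi(fu^2)\,du$ by direct Fourier-theoretic analysis: identify the coefficient of $t^{-1}$ in $fu^2$ as an explicit quadratic form in the first few coefficients of $u$, and then evaluate the resulting finite Gauss sum over $\FF_q$. Throughout I write $N=\ord(f)$, so $|f|_{\infty}=q^{N}$, and expand $f=\sum_{j\leq N} c_j t^j$ with $c_N\in\FF_q^{\times}$.

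First, I would dispose of the range $N\leq 0$. Here $|fu^2|\leq q^{N}\cdot q^{-2}\leq q^{-2}$ for every $u\in\TT$, so the coefficient of $t^{-1}$ in $fu^2$ vanishes identically and $\psi(fu^2)\equiv 1$, giving integral $1$. A short check against the three cases in the definition verifies $\cG(f)=1$ throughout this range, regardless of the parity of $\ord(f)$.

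For the main range $N\geq 1$, writing $u=\sum_{i\ge 1}a_{-i}t^{-i}$ and multiplying out, the coefficient of $t^{-1}$ in $fu^2$ is the quadratic form
\[
Q(a_{-1},\dots,a_{-N})=\sum_{\substack{1\leq i,i'\leq N\\ i+i'\leq N+1}} c_{i+i'-1}\,a_{-i}a_{-i'}=a^{T}Ma,
\]
where $M$ is the symmetric Hankel matrix with $M_{ii'}=c_{i+i'-1}$ (and $0$ whenever $i+i'>N+1$). Since $\psi(fu^2)$ depends on $u$ only through $a_{-1},\dots,a_{-N}$ and $\TT$ decomposes into cosets of measure $q^{-N}$ indexed by these coefficients, the integral collapses to the finite Gauss sum
\[
\int_\TT\psi(fu^2)\,du=q^{-N}\sum_{a\in\FF_q^{N}} e_q(a^{T}Ma).
\]
The key computational input is now the evaluation $\det M=(-1)^{\lfloor N/2\rfloor}c_N^{N}$, which I would obtain by reversing the order of the rows of $M$: this turns $M$ into a lower-triangular matrix with constant diagonal $c_N$, and the row reversal contributes the sign $(-1)^{\lfloor N/2\rfloor}$. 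In particular $M$ is nondegenerate, so diagonalizing it over $\FF_q$ (possible since $q$ is odd) and applying $G(a)=\chi(a)G(1)$ for $a\in\FF_q^{\times}$ yields the classical identity $\sum_{a}e_q(a^{T}Ma)=\chi(\det M)\,G(1)^{N}$, where $\chi$ is the quadratic character on $\FF_q^{\times}$.

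Combined with the standard identity $G(1)^{2}=\chi(-1)q$, the lemma then closes by a short parity bookkeeping. For $N=2m$ even, the factors $\chi((-1)^{\lfloor N/2\rfloor})$ and $\chi(-1)^{N/2}$ cancel, the sum equals $q^{m}$, and the integral becomes $q^{-m}=|f|_{\infty}^{-1/2}$, matching $\cG(f)$. For $N=2m-1$ odd, the parity $\lfloor N/2\rfloor+(N-1)/2=2(m-1)$ is even, so again the $\chi(-1)$-factors cancel and the integral becomes $|f|_{\infty}^{-1/2}\,G(c_N)/|G(c_N)|=|f|_{\infty}^{-1/2}\,\varepsilon_f$, also matching the definition. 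I expect the only delicate step to be this Hankel determinant evaluation together with the accompanying sign accounting; everything else is either a direct power-series expansion or a classical Gauss-sum identity over $\FF_q$. This is consistent with Lemma~\ref{ggg} being the base case that feeds into the Morse lemma~\ref{prop:morse} and the stationary-phase Proposition~\ref{station}, so no deeper function-field-specific machinery should be required.
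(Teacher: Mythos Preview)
Your proof is correct but proceeds along a genuinely different route from the paper's. The paper argues by a coset decomposition: writing $\ord(f)=2k$ (or $2k-1$), it partitions $\TT$ into cosets $\alpha+t^{-k}\TT$, shows that the integral over each coset vanishes unless $\alpha\in t^{-k}\TT$ (respectively $t^{-k+1}\TT$) by linearizing $f(\alpha+v)^2$ and invoking Lemma~\ref{lem:orthog}, and then evaluates the surviving integral directly (trivially in the even case, and as a one-variable $\FF_q$-Gauss sum in the odd case). Your approach instead truncates the power series of $u$ at depth $N=\ord(f)$, identifies the $t^{-1}$-coefficient of $fu^2$ as the Hankel quadratic form $a^TMa$ on $\FF_q^N$, and reduces the whole integral to the $N$-dimensional Gauss sum $\chi(\det M)\,G(1)^N$, finishing with the determinant computation $\det M=(-1)^{\lfloor N/2\rfloor}c_N^N$ and the identity $G(1)^2=\chi(-1)q$.

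Both arguments are short. The paper's version is closer in spirit to stationary phase---isolating the region where the phase is small and showing oscillation cancels everything else---which meshes naturally with how the lemma is used downstream in Proposition~\ref{station}. Your version is more algebraic and never needs to split into cases on the parity of $\ord(f)$ until the very last bookkeeping step; the price is the Hankel determinant, which is easy here but would not generalize as cleanly to the non-quadratic phases handled later by the Morse lemma.
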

\begin{proof}First, suppose that $\deg(f)=2k,$ where $k\geq 0.$ We partition $\TT$ into the cosets of $ t^{-k} \TT.$ Let $\alpha + t^{-k} \TT \subset \TT.$ We show that 
\[
\int_{\alpha + t^{-k} \TT} \psi(fu^2) du= 0
\]
for $\alpha \notin t^{-k} \TT.$ We have 
\begin{equation*}
\begin{split}
\int_{\alpha + t^{-k} \TT} \psi(fu^2) du= \int_{t^{-k} \TT} \psi(f(\alpha +v)^2) dv
&=\psi(f\alpha^2 )\int_{t^{-k} \TT} \psi(f(2\alpha v +v^2)) dv
\\
&=\psi(f\alpha^2 )\int_{t^{-k} \TT} \psi(2f\alpha v ) dv=0,
\end{split}
\end{equation*}
where we used Lemma~\ref{lem:orthog}, $\deg(fv^2)\leq -2$ and $\deg(\alpha f )\geq k.$ Therefore, 
\[
\int_{\TT} \psi(fu^2) du=\int_{ t^{-k} \TT} \psi(fu^2) du=\int_{ t^{-k} \TT}  du =|f|^{-1/2}=\mathcal{G}(f).
\]
On the other hand, suppose $\deg(f)=2k-1,$ where $k\geq 1.$ If $\alpha \notin t^{-k+1} \TT$ 
\begin{equation*}
\begin{split}
\int_{\alpha + t^{-k} \TT} \psi(fu^2) du= \int_{v\in t^{-k} \TT} \psi(f(\alpha +v)^2) du
&=\psi(f\alpha^2 )\int_{ t^{-k} \TT} \psi(f(2\alpha v +v^2)) dv
\\
&=\psi(f\alpha^2 )\int_{t^{-k} \TT} \psi(2f\alpha v ) dv=0,
\end{split}
\end{equation*}
where we used Lemma~\ref{lem:orthog}, $\deg(fv^2)\leq -3$ and $\deg(\alpha f )\geq k.$ Hence, it suffices to compute the integral over $t^{-k+1}\TT+t^{-k}\TT=t^{-k+1}\TT$:
\[
\int_{\TT} \psi(fu^2) du=\int_{ t^{-k+1}\TT} \psi(fu^2) du.
\]
The last integral is computed as follows. By definition,
\begin{eqnarray*}
\int_{t^{-k+1}\TT} \psi(fu^2) du&=& \lim_{m\rightarrow +\infty}q^{-m-k+1}\sum_{a_{-m}t^{-m-k+1}+\hdots+a_{-1}t^{-k}:a_i\in\mathbb{F}_q}\psi((a_{-m}t^{-m-k+1}+\hdots+a_{-1}t^{-k})^2f)\\&=& \lim_{m\rightarrow +\infty}q^{-m-k+1}\sum_{a_{-m},\hdots,a_{-1}\in\mathbb{F}_q}e_q(a_fa_{-1}^2)\\ &=& q^{-k}\sum_{x\in\mathbb{F}_q}e_q(a_fx^2)=q^{-k}G(f)=\mathcal{G}(f).
\end{eqnarray*}
It is well-known that $G(f)=q^{1/2}\varepsilon_f$. Consequently, $q^{-k}G(f)=|f|^{-1/2}\varepsilon_f$. We have therefore proved the result for $\deg(f)=2k-1$, $k\geq 1$. \\
\\
Finally, if $\deg(f)\leq -1$, then $\deg(fu^2)\leq -3$ for $u\in\mathbb{T}$. Consequently,
\[\int_{\TT}\psi(fu^2)du=\int_{\TT}du=1.\] 
This concludes the proof.
\end{proof}
Next, we give a formula for the Gaussian integral associated to any symmetric matrix $A \in M_{d\times d}(K_{\infty}).$ Define
\[
\cG(A):=\int_{\TT^d}\psi(\vec{u}^{\intercal}A\vec{u}).
\]

\begin{lemma}\label{gauint}
We have 
\[
\cG(A)=\prod_{i=1}^d \cG(\lambda_i),
\]
where $\lambda_i \in K_{\infty}$ for $1 \leq i\leq d$ are diagonal elements of a diagonal matrix $g^{\intercal}Ag$ for some $g\in \GL_d(\cO_{\infty})$ ($A$ is diagonalizable in such a way by Lemma~\ref{diag}). \end{lemma}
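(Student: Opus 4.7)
The plan is to reduce the general Gaussian integral to the one-dimensional case via a change of variables that diagonalizes $A$. First, I would invoke Lemma~\ref{diag} to produce $g\in GL_d(\cO_\infty)$ with $g^{\intercal}Ag=D[\lambda_1,\dots,\lambda_d]$. The key observation is that multiplication by $g$ preserves $\TT^d$: since both $g$ and $g^{-1}$ have entries in $\cO_\infty$, and $\TT$ is stable under multiplication by elements of $\cO_\infty$ and under addition (by the non-archimedean triangle inequality), one has $g\TT^d\subseteq\TT^d$ and likewise $g^{-1}\TT^d\subseteq\TT^d$, so $g\TT^d=\TT^d$.

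Next, I would apply the change of variables Lemma~\ref{lem:change} with $M=g$ and $\vec{u}=g\vec{v}$. Since $g\in GL_d(\cO_\infty)$ forces $|\det g|_\infty=1$, this gives
\[
\cG(A)=\int_{\TT^d}\psi(\vec{u}^{\intercal}A\vec{u})\,d\vec{u}=\int_{\TT^d}\psi(\vec{v}^{\intercal}g^{\intercal}Ag\vec{v})\,d\vec{v}=\int_{\TT^d}\psi\left(\sum_{i=1}^d\lambda_iv_i^2\right)d\vec{v}.
\]
The integrand now splits as a product $\prod_i\psi(\lambda_iv_i^2)$ in the independent variables $v_1,\dots,v_d$, so Fubini's theorem factorizes the integral as $\prod_{i=1}^d\int_{\TT}\psi(\lambda_iv_i^2)\,dv_i$. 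Finally, applying Lemma~\ref{ggg} to each factor yields $\prod_{i=1}^d\cG(\lambda_i)$, as claimed.

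There is no real obstacle here; the only point that requires care is verifying that the change of variables is well-posed, which amounts to the observation above that $g$ restricts to a measure-preserving bijection of $\TT^d$. Everything else is a formal reduction to the one-variable Gaussian formula already established.
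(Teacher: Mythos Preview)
Your proposal is correct and follows essentially the same route as the paper's proof: diagonalize via Lemma~\ref{diag}, apply the change of variables Lemma~\ref{lem:change} with $M=g$, and then factor the integral using Lemma~\ref{ggg}. If anything, you are slightly more explicit than the paper in justifying that $g\TT^d=\TT^d$ and $|\det g|_\infty=1$.
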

\begin{proof}
By Lemma~\ref{diag}, there exists $g\in \GL_d(\cO_{\infty}) $ such that $g^{\intercal}Ag=\text{diag}(\lambda_1,\dots,\lambda_d).$  By the change of variables formula in Lemma~\ref{lem:change}, we have 
\begin{equation*}
\begin{split}
\cG(A)=\int_{\TT^d}\psi(\vec{u}^{\intercal}A\vec{u}) d\vec{u}&=\int_{\TT^d}\psi\left((g^{-1}\vec{u} )^{\intercal} g^{\intercal}Ag (g^{-1}\vec{u})\right) d\vec{u}
\\
&=\int_{\TT^d}\psi\left( \sum_{i=1}^d \lambda_i v_i^2 \right) d\vec{v}=\prod_{i=1}^d \cG(\lambda_i),
\end{split}
\end{equation*}
where $\begin{bmatrix}v_1 \hdots v_d \end{bmatrix}=\vec{v}=g^{-1}\vec{u}.$ This completes the proof of the lemma. 
\end{proof}

Finally, we give a proof of Proposition~\ref{station}.
\begin{proof}[Proof of Proposition~\ref{station} ]
By Proposition~\ref{prop:morse}, there exists $\Psi \in \cA_{\infty}(\TT^d)$ such that $\phi=\phi(\vec{0})+\frac{1}{2}\Psi^{\intercal}H_{\phi}(\vec{0})\Psi.$ Hence,
\[
\int_{\TT^d}\psi(f\phi(\vec{u}))d\vec{u}=\int_{\TT^d}\psi(f(\phi(\vec{0})+\frac{1}{2}\Psi(\vec{u})^{\intercal}H_{\phi}(\vec{0})\Psi(\vec{u})))d\vec{u}.
\]
Since $\Psi\in \cA_{\infty}(\TT^d)$, Proposition~\ref{auto} implies that $\Psi$ is a measure-preserving automorphism of $\TT^d,$ and so we have the following equality of volume measures: $d\Psi(\vec{u})=d\vec{u}$. By Lemma~\ref{gauint}, 
\[
\int_{\TT^d}\psi(f(\phi(\vec{0})+\frac{1}{2}\Psi^{\intercal}H_{\phi}(\vec{0})\Psi))d\Psi=\psi(f \phi(\vec{0})) \prod_{i=1}^{d}\cG (f\lambda_i/2),
\]
where $\lambda_i \in \cO_{\infty}$, $1 \leq i\leq d$, are diagonal elements of a diagonal $g^{\intercal}H_{\phi}(\vec{0})g$ for some $g\in \GL_d(\cO_{\infty})$ obtained using Lemma~\ref{diag}. This concludes the proof of our proposition.
\end{proof}
\section{Bounds on the oscillatory integrals $I_{g,r}(\vec{c})$}\label{osil}
 
In this section, we give explicit formulas for the oscillatory integrals $I_{g,r}(\vec{c})$ in terms of Kloosterman or Sali\'e sums. By Lemma~\ref{diag}, we suppose  that $F(\gamma\vec{u})=\sum_{\eta_i} \eta_iu_i^2,$ where $\gamma\in \GL_d(\cO_{\infty}).$
 Recall the additive character $\psi:K_\infty\to \CC^*$ from 
\S \ref{s:add-characters}, and 
\[h(x,y)=\begin{cases}|x|^{-1} & \mbox{if $|y|<|x|$}\\ 0 & \mbox{otherwise.}\end{cases}\]
\subsection{Weight function}
In this section, we define the test function $w$ that we use for estimating the oscillatory integrals $I_{g,r}(\vec{c})$ at the end of this section. Recall Definition~\ref{def:aniso} of an anisotropic cone.
\begin{definition}[Anisotropic cone]
$\Omega\subset K_{\infty}^d$ is an \textit{anisotropic cone} with respect to the quadratic form $F(\vec{x})$ if there exist fixed positive integers $\omega$  and  $\omega^{\prime}$ such that: 
\begin{enumerate}
\item \label{prop1} if $\vec{x}\in \Omega$ then $f\vec{x}\in \Omega$ for every $f\in K_\infty;$
\item \label{prop2} if $\vec{x}\in\Omega$ and $\vec{y} \in  K_\infty^d $ with $|\vec{y}|\leq |\vec{x}|/ \hat{\omega},$ then $\vec{x}+\vec{y} \in \Omega;$ and 
\item \label{prop3}  if $\vec{x}\in\Omega,$  $ \hat{\omega^{\prime}} |F(\vec{x})| \geq |\vec{x}|^2.$
\end{enumerate}
\end{definition}

\begin{lemma}\label{cone}
Let $F(\vec{x})$ be a non-degenerate quadratic form in $d\geq 4$ variables. There is an anisotropic cone $\Omega$ in $\Ki^d$ such that for any given $f$, $X_f(\Ki)\cap\Omega\neq\emptyset$.
\end{lemma}
\begin{proof}
We first make some reductions. Since being an anisotropic cone is preserved by $\Ki$-linear change of coordinates, we may assume without loss of generality that $F$ is a diagonal form and the coefficients of $F$ are among the representatives in $A:=\{1,\nu,t,\nu t\}$ of the classes in $\Ki^{\times}/\Ki^{\times 2}$, where $\nu$ is a quadratic non-residue of $\mathbb{F}_q$. Furthermore, we may assume without loss of generality that $f$ is one of the representatives in $A$. It is enough to show that for every $f\in A$, there is an anisotropic cone $\Omega_f$ intersecting $X_f(\Ki)$. We show this for $f=\nu t$; the other cases will follow by scalings. The Lemma would then follow by taking 
\[\Omega:=\bigcup_{f\in A}\Omega_f.\]
Suppose the class of $f$ is $\nu t$. Consider the set $\Omega_{\nu t}:=\{\vec{x}\in\Ki^d:|F(\vec{x})|\geq \frac{1}{q^2}|\vec{x}|^2\}$. This is an anisotropic cone with $\omega=4$ and $\omega'=2$. This satisfies properties (1) and (3) of Definition~\ref{def:aniso}. We check property (2). Suppose $\vec{x}\in\Omega_{\nu t}$ and $\vec{y}\in \Ki^d$ such that $|\vec{y}|\leq |\vec{x}|/q^4$. We must show that $\vec{x}+\vec{y}\in\Omega_{\nu t}$. Note that our norm is non-archimedean, and so $|\vec{x}+\vec{y}|=|\vec{x}|$. Furthermore, writing $F(\vec{x})=\vec{x}^{\intercal}A\vec{x}$, where $A$ is a diagonal matrix with diagonal entries among $1,\nu,t,\nu t$, we have $F(\vec{x}+\vec{y})=F(\vec{x})+F(\vec{y})+2\vec{x}^{\intercal}A\vec{y}$. From $|\vec{y}|\leq |\vec{x}|/q^4$, we have $|F(\vec{y})|\leq |\vec{x}|/q^7$. Furthermore, $|\vec{x}^{\intercal}A\vec{y}|\leq |\vec{x}|^2/q^3$. Therefore, $|F(\vec{x}+\vec{y})|=|F(\vec{x})|$ as well. This completes the proof that $\Omega_{\nu t}$ is an anisotropic cone.\\
\\
If one of the coefficients of $F$ is $\nu t$, then we have a solution in $\Omega_{\nu t}$. Otherwise, the coefficients are among $1,\nu,t$ and so at least two of the coefficients are equal since $d\geq 4$. If $-1=a^2$ is a square, then every element of $\Ki$ can be written as the sum of two squares. Indeed, every element of a finite field is a sum of two squares, and so $\nu$ is a sum of two squares. Furthermore, we also have
\[t=\left(t\left(1+\frac{1}{t}\right)^{1/2}\right)^2+(at)^2,\]
\[\nu t=\left(t\left(1+\frac{\nu}{t}\right)^{1/2}\right)^2+(at)^2.\]
Since at least one coefficient repeats, this implies that we can represent any element by the quadratic form in this case.\\
\\
On the other hand, $-1$ may be a quadratic non-residue, in which case we may assume $\nu=-1$. If both $1$ and $-1$ show up as coefficients, we may represent any element of $\Ki$, as can be shown as above. Therefore, let us assume otherwise. We are reduced to showing that there is a solution in the anisotropic cone to the equations $t(x_1^2+x_2^2+x_3^2+1)=\pm x_4^2$, $t(x_1^2+x_2^2+1)=\pm (x_3^2+x_4^2)$, $t(x_1^2+1)=\pm (x_2^2+x_3^2+x_4^2)$, and $x_1^2+\hdots+x_4^2=\pm t$ for any choice of signs.\\
\\
$x_1^2+x_2^2+1=0$ is solvable modulo any odd prime, and so the first and second equations have a solution in the anisotropic cone. Take $a,b\in\mathbb{F}_q^{\times}$ such that $a^2+b^2=-1$ (since $-1$ is a quadratic non-residue, $ab\neq 0$). For the third equation, let $(x_1,x_2,x_3,x_4)=\left(0,at\left(1\mp\frac{1}{t}\right)^{1/2},bt\left(1\mp\frac{1}{t}\right)^{1/2},t\right)$. Note that such squareroots exist in $\Ki$ because $q$ is odd (see the beginning of Section~\ref{analyticsection} for the formula). For the final equation $\pm t=x_1^2+\hdots+x_4^2$ let $(x_1,x_2,x_3,x_4)=\left(at\left(1\mp\frac{1}{t}\right)^{1/2},bt\left(1\mp\frac{1}{t}\right)^{1/2},t,0\right)$.\\
\\
The other classes can be dealt with similarly; at the beginning, you can multiply the quadratic form by $\nu$ or $t$ and scale the coordinates to reduce it to the above case that $f$ has class $\nu t$. The classes $\nu,t,\nu t$ have norm at most $q$. It then follows that we can take
\[\Omega:=\{\vec{x}\in\Ki^d:|F(\vec{x})|\geq \frac{1}{q^3}|\vec{x}|^2\}.\]
\end{proof}
Fix an anisotropic cone $\Omega$  with respect to $F(\vec{x})$ (such that $\Omega\cap X_f(\Ki)\neq\emptyset$). 
\begin{lemma}\label{ultra}
Suppose that $\vec{x}\in \Omega$ and $\vec{y}\notin \Omega.$ Then
\[
|\vec{x}\pm \vec{y}|\geq \max{(|\vec{x}|,|\vec{y}|)}/\hat{\omega},
\]
where $\omega$ appears in property~\eqref{prop2} of the definition of anisotropic cone $\Omega.$ 
\end{lemma}
\begin{proof}
If $|\vec{x}|\neq |\vec{y}|$, then $|\vec{x}\pm \vec{y}|=\max(|\vec{x}|,|\vec{y}|)$ and the lemma follows. Otherwise, $|\vec{x}|=|\vec{y}|$ and 
the lemma follows from property~\eqref{prop2} as $\vec{y}\notin \Omega$.  
\end{proof}
If $A=[a_{ij}]$ is a matrix with $\Ki$ coefficients, its norm $|A|:=\max|a_{ij}|$. 
\begin{lemma}\label{opnorm}
For matrices $A,B$ with $\Ki$ coefficients, $|AB|\leq |A||B|.$ If $g\in\GL_d(\mathcal{O}_{\infty})$ and $C\in \textup{M}_{d\times d'}(\Ki)$, then $|gC|=|C|.$
\end{lemma}
\begin{proof}$|AB|\leq |A||B|$ follows from the fact that the norm on $\Ki$ is non-archimedean. For the second part, first note that if $g\in\GL_d(\mathcal{O}_{\infty})$, then $|g|\leq 1$ and $|g^{-1}|\leq 1$. Furthermore, $|g||g^{-1}|\geq |gg^{-1}|=1$. Consequently, $|g|=1$. If $g\in\GL_d(\mathcal{O}_{\infty})$ and $C\in \textup{M}_{d\times d'}(\Ki)$, then $|gC|\leq |C|$. On the other hand, $|C|=|g^{-1}(gC)|\leq |g^{-1}||gC|=|gC|$. This completes the proof.
\end{proof}
For non-degenerate quadratic form $F(\vec{x})=\vec{x}^{\intercal}A\vec{x},$ we say $F^{*}(\vec{x})=\vec{x}^{\intercal}A^{-1}\vec{x}$ is the dual of $F(\vec{x}).$ Note that $F(\vec{x})=F^*(A\vec{x}).$ Let $\Omega^*:=A\Omega.$ By Lemma~\ref{diag}, choose $g\in\GL_d(\mathcal{O}_{\infty})$ such that 
\[
g^{\intercal}Ag=\text{diag}(\eta_1,\dots,\eta_d)
\]
is a diagonal matrix with $\eta_i\in\Ki$. A consequence of Lemma~\ref{opnorm} is that if $g\in\GL_d(\mathcal{O}_{\infty})$, then $\max|\eta_i|=|g^{\intercal}Ag|=|A|$ only depends on $A$. Similarly, $|A^{-1}|=|g^{-1}A^{-1}(g^{\intercal})^{-1}|=\max|\eta_i|^{-1}=\frac{1}{\min|\eta_i|}$ implies that $\min|\eta_i|$ only depends on $A$.
\begin{example}
Let $A:=\begin{bmatrix}1+t^2& t\\ t& 1\end{bmatrix}$ and  $g=\begin{bmatrix}1& -\frac{t^{-1}}{1+t^{-2}}\\ 0& 1\end{bmatrix}\in\GL_2(\mathcal{O}_{\infty})$. Then $g^{\intercal}Ag=\diag(t^2+1,\frac{t^{-2}}{1+t^{-2}})$. In this case, $\min\deg\eta_i=-2$ and $\max\deg\eta_i=2$.
\end{example}
\begin{lemma}\label{dualcone}
  $\Omega^*$ is an anisotropic cone with respect to $F^*$ with parameter $\omega^*:=\omega+\max\deg(\eta_i)-\min\deg(\eta_i)$ in property~\eqref{prop2} and parameter $\omega'^*:=\omega'+2\max\deg\eta_i$ in property~\eqref{prop3}. 
\end{lemma}
\begin{proof}
Let $\Omega$ have parameters $\omega$ and $\omega'$. Note that if $\vec{x}^*\in\Omega^*$, then $\vec{x}^*=A\vec{x}$ for some $\vec{x}\in\Omega$. If $f\in\Ki$, then $f\vec{x}^*=A(f\vec{x})\in\Omega^*$ as $f\vec{x}\in\Omega$ since $\Omega$ is an anisotropic cone. This verifies property~\eqref{prop1} of anisotropic cones for $\Omega^*.$ 
We verify property~\eqref{prop2} with parameter $\omega^*$. Assuming  
\(|\vec{y}^*|\leq |\vec{x}^*|/\hat{\omega^*},\)  $\vec{x}^*\in\Omega^*$ and $\vec{y}^*\in\Ki^d$, we have 
\[
|A^{-1}\vec{y}^*|\leq \frac{|\vec{y}^*|}{\min|\eta_i|}\leq\frac{|\vec{x}^*|}{\hat{\omega^*}\min|\eta_i|}=\frac{|A(A^{-1}\vec{x}^*)|}{\hat{\omega^*}\min|\eta_i|}\leq\frac{\max|\eta_i|}{\min|\eta_i|\hat{\omega^*}}|A^{-1}\vec{x}^*|= |A^{-1}\vec{x}^*|/\hat{\omega}.
\]
Since $A^{-1}\vec{x}^*\in\Omega$ and $|A^{-1}\vec{y}^*|\leq |A^{-1}\vec{x}^*|/\hat{\omega}$, property~\eqref{prop2} of anisotropic cones for $\Omega$ implies that 

 $A^{-1}(\vec{x}^*+\vec{y}^*)\in\Omega$, and so $\vec{x}^*+\vec{y}^*\in\Omega^*$. Finally, we verify property~\eqref{prop3} for parameter $\omega'^*.$ Suppose $\vec{x}^*=A\vec{x}$, $\vec{x}\in\Omega$. Then 
\[\hat{\omega'^*}|F^*(\vec{x}^*)|=\hat{\omega'^*}|F^*(A\vec{x})|=\hat{\omega'^*}|F(\vec{x})|\geq \max|\eta_i|^2|\vec{x}|^2= |A|^2|A^{-1}\vec{x}^*|^2\geq |\vec{x}^*|^2.\]
\end{proof}
Recall the assumption of Theorem~\ref{mainthm} that $\Omega$ is an anisotropic cone such that $X_{f}(\Ki)\cap\Omega\neq\emptyset$. Choose $\vec{x}_0\in X_{f}(\Ki)\cap\Omega$, and let $w$ be the characteristic function of a ball centered at $\vec{x}_0$ defined as follows:
\begin{equation}\label{defw}
w(\vec{x}):=\begin{cases}
1 \text{ if } |\vec{x}-\vec{x}_0| < |t^{-\alpha_0}f|^{1/2},
\\
0 \text{ otherwise,}
\end{cases}
\end{equation}
where 
\begin{equation}\label{alpha0}
\alpha_0:=2\omega+2\max{\deg(\eta_i)}+3|\min\deg(\eta_i)|+\omega'.
\end{equation}

\begin{lemma}
If $w(\vec{x})\neq 0,$  then $\vec{x}\in \Omega.$  
\end{lemma}
\begin{proof}If $w(\vec{x})\neq 0$, then $|\vec{x}-\vec{x}_0|<|t^{-\alpha_0}f|^{1/2}$. Since $\vec{x}_0\in X_f(\Ki)\cap\Omega$ and $\Omega$ is an anisotropic cone with parameters $\omega$ and $\omega'$, by property~\eqref{prop2} of anisotropic cones, it suffices to have $|t^{-\alpha_0}f|^{1/2}\leq |\vec{x}_0|/\hat{\omega}$. In fact, $f=F(\vec{x}_0)$, and so $|t^{-\alpha_0}f|^{1/2}=|F(\vec{x}_0)|^{1/2}/\hat{\alpha_0}^{1/2}\leq\max|\eta_i|^{1/2}|\vec{x}_0|/\hat{\alpha_0}^{1/2}$. Therefore, any $\alpha_0\geq 2\omega+\max\deg(\eta_i)$ works. Note that since $F$ is a quadratic form with coefficients in $\mathcal{O}$, $|A|\geq 1$ and so $\max\deg(\eta_i)\geq 0$. Our choice of $\alpha_0$ satisfies this inequality. The conclusion follows.
\end{proof}
\begin{lemma}
$$\left\{\vec{y}\in K_\infty^d: |\vec{y}-A\vec{x}_0|<  |t^{-\alpha_0}f|^{1/2} \right\}  \subset  \Omega^*$$
\end{lemma}
\begin{proof}
Since $\vec{x}_0\in X_f(\Ki)$, $F^*(A\vec{x}_0)=F(\vec{x}_0)=f$. Therefore, $|\vec{y}-A\vec{x}_0|<|t^{-\alpha_0}f|^{1/2}$ implies that
\begin{equation}\label{kop}|\vec{y}-A\vec{x}_0|<|t^{-\alpha_0}F^*(A\vec{x}_0)|^{1/2}\leq \frac{1}{\min(|\eta_i|)^{1/2}}|A\vec{x}_0|/\hat{\alpha_0}^{1/2}.\end{equation}
Note that $A\vec{x}_0\in\Omega^*$. By Lemma~\ref{dualcone}, $\Omega^*$ is an anisotropic cone with parameter $\omega^*:=\omega+\max\deg(\eta_i)-\min\deg(\eta_i)$ in property~\eqref{prop2}. Hence, $\vec{y}\in\Omega^*$ if 
\[|\vec{y}-A\vec{x}_0|\leq |A\vec{x}_0|/\hat{\omega^*}.\]
By~\eqref{kop}, this is satisfied if
\[\frac{1}{\min(|\eta_i|)^{1/2}}|A\vec{x}_0|/\hat{\alpha_0}^{1/2}\leq |A\vec{x}_0|/\hat{\omega^*},\]
that is, if $\hat{\alpha_0}^{1/2}\geq \frac{\hat{\omega^*}}{\min(|\eta_i|)^{1/2}}$. Therefore, any $\alpha_0\geq 2\omega+2\max\deg(\eta_i)-3\min\deg(\eta_i)$ works. Our choice of $\alpha_0$ satisfies this inequality. 
\end{proof}
Write $\vec{x}_0=g\vec{t}_0+\boldsymbol{\lambda}$ and $\vec{x}=g\vec{t}+\boldsymbol{\lambda}$, where $\vec{t}_0, \vec{t}\in\Ki^d$. Substituting these in \eqref{defw}, we obtain 
\[
w(g\vec{t}+\boldsymbol{\lambda})=\begin{cases}
1 \text{ if } |\vec{t}-\vec{t}_0|  < \hR,
\\
0 \text{ otherwise,}
\end{cases}
\]
where 
\begin{equation}\label{R}R:= \lceil{\deg(f)/2} -deg(g)-\alpha_0/2\rceil.\end{equation}
By the assumptions of Theorem~\ref{mainthm}, $\deg f \geq 4\deg g +O(1)$, and so we may assume $R>0.$
\begin{lemma}\label{t0}
Assume that $R>0.$ Then $\vec{x}_0\in \Omega$ is equivalent to $\vec{t}_0\in \Omega.$
\end{lemma}
\begin{proof} Suppose that  $\vec{x}_0\in \Omega.$ Since $\Omega$ is an anisotropic cone, 
\(
\vec{t}_0+\frac{\boldsymbol{\lambda}}{g}= \frac{1}{g}\vec{x}_0 \in \Omega
\). By property~\eqref{prop2}, if $\frac{|\vec{x}_0|}{|g|\hat{\omega}} \geq |\frac{\boldsymbol{\lambda}}{g}|$, then $\vec{t}_0\in \Omega.$  We have 
\[
\frac{|\vec{x}_0|}{|g|\hat{\omega}}  \geq  \left( \frac{|\vec{x}_0^{\intercal}A\vec{x}_0|}{|A|}\right)^{1/2}\frac{1}{|g|\hat{\omega}} \geq
\frac{|f|^{1/2}}{|g||A|^{1/2}\hat{\omega}}\geq \hat{R}\geq 1.
\]
On the other hand, we have $ |\frac{\boldsymbol{\lambda}}{g}| \leq 1.$ This concludes one side of the lemma, and the converse  follows similarly. 

\end{proof}

\subsection{Bounding $I_{g,r}(\vec{c})$}
Recall the notations that we introduced in Section~\ref{deltamethod}.  Let
\begin{equation}\label{defQ}
Q:=  \lceil{\deg(f)/2} -deg(g)\rceil +\max_{i}(\deg(\eta_i))+\omega^{\prime}
\end{equation} in equation~\eqref{newequu}
\[N(w,\boldsymbol{\lambda})=\frac{1}{|g|\hQ^2}\sum_{\substack{
r\in \cO\\
|r|\leq \hat Q\\
\text{$r$ monic}
} }\sum_{\vec{c}\in\mathcal{O}^d}|gr|^{-d}S_{g,r}(\vec{c})I_{g,r}(\vec{c}).\]

  We have  
\begin{equation}\label{intt}
I_{g,r}(\vec{c})= \int_{\Ki^d} h\left(\frac{r}{t^Q},\frac{G(\vec{t})}{t^{2Q}}\right) w(g\vec{t}+\boldsymbol{\lambda}) \psi\left(\frac{\left< \vec{c},\vec{t} \right>}{gr}    \right) d\vec{t}= \int_{\substack{|\vec{t}-\vec{t}_0| < \hR \\ |G(\vec{t})|< \hQ|r|}} \frac{\hQ}{|r|}  \psi\left(\frac{\left< \vec{c},\vec{t} \right>}{gr}    \right) d\vec{t}
\end{equation}
where $G(\vec{t}):=\frac{F(g\vec{t}+\boldsymbol{\lambda})-f}{g^2}=F(\vec{t})+\frac{1}{g}(2\boldsymbol{\lambda}^{\intercal} A\vec{t}-k)$ with $k=\frac{f-F(\boldsymbol{\lambda})}{g}$ defined in equation~\eqref{defk}.
 Let  \begin{equation}
 \label{kappadef}
 \kappa:=\left|\frac{\vec{c}}{g }\right|.
 \end{equation}
\begin{lemma}\label{noosc}
Suppose that $\kappa< \frac{|r|}{\hR},$ then  $I_{g,r}(\vec{c})= \psi\left(\frac{\left< \vec{c},\vec{t}_0 \right>}{gr}    \right) I_{g,r}(\vec{0}).$
\end{lemma}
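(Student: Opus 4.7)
The plan is to prove this by translating the domain of integration to be centered at the origin and then showing that, under the hypothesis $\kappa<|r|/\hat{R}$, the character $\psi\bigl(\langle\vec{c},\vec{t}\rangle/(gr)\bigr)$ is constant on the translated region. Concretely, I would start by substituting $\vec{t}=\vec{t}_0+\vec{u}$ in the defining integral
\[I_{g,r}(\vec{c})=\int_{\substack{|\vec{t}-\vec{t}_0|<\hat R\\ |G(\vec{t})|<\hat Q|r|}}\frac{\hat Q}{|r|}\psi\left(\frac{\langle\vec{c},\vec{t}\rangle}{gr}\right)d\vec{t},\]
which converts the constraint $|\vec{t}-\vec{t}_0|<\hat R$ into $|\vec{u}|<\hat R$ while leaving the Haar measure unchanged. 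Using additivity of $\psi$, the integrand factors as
\[\frac{\hat Q}{|r|}\,\psi\left(\frac{\langle\vec{c},\vec{t}_0\rangle}{gr}\right)\psi\left(\frac{\langle\vec{c},\vec{u}\rangle}{gr}\right).\]

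The central step is then showing $\psi\bigl(\langle\vec{c},\vec{u}\rangle/(gr)\bigr)=1$ for every $\vec{u}$ with $|\vec{u}|<\hat R$. By the ultrametric inequality,
\[\left|\frac{\langle\vec{c},\vec{u}\rangle}{gr}\right|\le\max_i\frac{|c_i|}{|gr|}\cdot|\vec{u}|\;=\;\frac{\kappa}{|r|}\cdot|\vec{u}|.\]
Since $|\vec{u}|<\hat R$ and values here are powers of $q$, this gives $|\vec{u}|\le\hat R/q$, and the hypothesis $\kappa<|r|/\hat R$ forces $\kappa\le|r|/(q\hat R)$. Combining these bounds yields $|\langle\vec{c},\vec{u}\rangle/(gr)|\le q^{-2}$, so the coefficient of $t^{-1}$ in this element vanishes and $\psi$ evaluates to $1$ by the very definition of $\psi$ (Section~\ref{s:add-characters}).

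With the oscillating factor reduced to the constant $\psi(\langle\vec{c},\vec{t}_0\rangle/(gr))$, I would pull it outside the integral. What remains is exactly
\[\psi\left(\frac{\langle\vec{c},\vec{t}_0\rangle}{gr}\right)\int_{\substack{|\vec{u}|<\hat R\\ |G(\vec{t}_0+\vec{u})|<\hat Q|r|}}\frac{\hat Q}{|r|}\,d\vec{u},\]
and reversing the substitution identifies the remaining integral as $I_{g,r}(\vec{0})$ (no $\vec{c}$-dependence survives). This completes the argument.

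There is essentially no genuine obstacle here; the lemma is a straightforward application of the non-archimedean ultrametric property together with the triviality of $\psi$ on sufficiently small elements. The only point that requires care is tracking the exact power of $q$ needed to conclude that $\psi$ is trivial on the shifts, namely checking that the strict inequality $\kappa<|r|/\hat R$ among powers of $q$ gives an extra factor of $q$ that pushes $|\langle\vec{c},\vec{u}\rangle/(gr)|$ down to at most $q^{-2}$ (killing the $t^{-1}$-coefficient), rather than merely to a value $<1$, which would not suffice to guarantee $\psi=1$.
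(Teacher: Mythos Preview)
Your proof is correct and follows essentially the same approach as the paper: both argue that under the hypothesis $\kappa<|r|/\hat R$, the character $\psi(\langle\vec{c},\vec{t}\rangle/(gr))$ is constant (equal to its value at $\vec{t}_0$) on the region of integration, after which the claim is immediate. The paper compresses this into a single line, while you spell out the ultrametric estimate and the extra factor of $q$ coming from strict inequalities among powers of $q$; this added care is warranted but does not constitute a different method.
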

\begin{proof}
Since $\max_{i}(|c_i|)< \frac{|gr|}{\hR}$ and $|\vec{t}-\vec{t}_0|<\hR$,  $ \psi\left(\frac{\left< \vec{c},\vec{t} \right>}{gr}    \right)=\psi\left(\frac{\left< \vec{c},\vec{t}_0 \right>}{gr}    \right).$ 
Hence, we have 
\[
I_{g,r}(\vec{c})=\psi\left(\frac{\left< \vec{c},\vec{t}_0 \right>}{gr}    \right) \int_{\substack{|\vec{t}-\vec{t}_0| < \hR \\ |G(\vec{t})|< \hQ|r|}} \frac{\hQ}{|r|}   d\vec{t}= \psi\left(\frac{\left< \vec{c},\vec{t}_0 \right>}{gr}    \right) I_{g,r}(\vec{0}).
\]
This completes the proof of our lemma. 
\end{proof}
\begin{lemma}\label{auxlem}
Let $Q, R$ and $\vec{t}_0$ be as above, and let $k=\frac{f-F(\boldsymbol{\lambda})}{g}$ as before. Suppose that $|\vec{t}-\vec{t}_0|< \hR.$ Then $|G(\vec{t})| < \hQ|r|$ is equivalent to $|F(\vec{t})-k/g|<\hQ |r|.$ Moreover, if $|G(\vec{t})|< \hQ|r|$, then $|G(\vec{t}+\boldsymbol{\zeta})|  < \hQ|r|$ for every $\boldsymbol{\zeta}\in K_\infty^d,$ where $|\boldsymbol{\zeta}| \leq \min(|r|,\hR).$
\end{lemma}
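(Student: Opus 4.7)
My plan is to handle the two assertions of Lemma~\ref{auxlem} separately, in both cases reducing to the ultrametric triangle inequality after estimating a difference.

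For the first assertion, I would simply expand
\[G(\vec{t}) - \bigl(F(\vec{t}) - k/g\bigr) = \frac{2\boldsymbol{\lambda}^T A\vec{t}}{g},\]
and show that this quantity has norm strictly less than $\hat Q|r|$. Then the ultrametric inequality $|a+b|\leq\max(|a|,|b|)$, combined with strict inequality for one of the summands, immediately gives equivalence of $|G(\vec{t})|<\hat Q|r|$ and $|F(\vec{t})-k/g|<\hat Q|r|$. To estimate the error term, I would use $|\boldsymbol{\lambda}|<|g|$ (so $|\boldsymbol{\lambda}/g|<1$), $|2|=1$ (odd characteristic), and $|A|=q^{\max_i\deg\eta_i}$ (since $\gamma\in GL_d(\cO_\infty)$ preserves the max norm), to get $|2\boldsymbol{\lambda}^TA\vec{t}/g|\leq |A||\vec{t}|$. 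The bound $|\vec{t}|\leq\max(|\vec{t}_0|,\hat R)$ combined with property~\eqref{prop3} of the anisotropic cone applied to $\vec{x}_0=g\vec{t}_0+\boldsymbol{\lambda}\in V_f\cap\Omega$ (giving $|\vec{x}_0|\leq\hat\omega^{\prime/2}|f|^{1/2}$, and hence $|\vec{t}_0|\leq\hat\omega^{\prime/2}|f|^{1/2}/|g|$) then yields $|A||\vec{t}|\leq q^{\max_i\deg\eta_i+\omega^\prime/2}|f|^{1/2}/|g|$. Comparing with $\hat Q|r|\geq\hat Q=q^{\lceil\deg f/2-\deg g\rceil+\max_i\deg\eta_i+\omega^\prime}$, we gain a factor of $q^{\omega^\prime/2}>1$ of slack, so the strict inequality holds.

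For the second assertion, I would expand
\[G(\vec{t}+\boldsymbol{\zeta})-G(\vec{t}) = 2\vec{t}^T A\boldsymbol{\zeta}+F(\boldsymbol{\zeta})+\frac{2\boldsymbol{\lambda}^T A\boldsymbol{\zeta}}{g},\]
and bound each of the three summands by something strictly less than $\hat Q|r|$. Using $|\boldsymbol{\zeta}|\leq|r|$, the first satisfies $|2\vec{t}^TA\boldsymbol{\zeta}|\leq|A||\vec{t}||r|$, which by the same estimate on $|\vec{t}|$ is bounded by $q^{\max_i\deg\eta_i+\omega^\prime/2}|f|^{1/2}|r|/|g|<\hat Q|r|$. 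For the second and third, I would use $|\boldsymbol{\zeta}|^2\leq|r|\cdot\hat R$ and $|\boldsymbol{\lambda}/g|<1$ to get $|F(\boldsymbol{\zeta})|\leq|A||\boldsymbol{\zeta}|\hat R$ and $|2\boldsymbol{\lambda}^TA\boldsymbol{\zeta}/g|\leq|A|\hat R\leq|A|\hat R|r|$; plugging in the definitions of $\hat R$ and $\hat Q$, the ratio to $\hat Q|r|$ is at least $q^{\omega^\prime-1+\alpha_0/2}$, which is $>1$ provided $\omega^\prime$ and $\alpha_0$ are chosen large enough (as we have assumed). Another application of the ultrametric triangle inequality combined with $|G(\vec{t})|<\hat Q|r|$ then yields $|G(\vec{t}+\boldsymbol{\zeta})|<\hat Q|r|$.

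The main (and really only) obstacle here is the bookkeeping, namely tracking how each of the degree-of-freedom parameters ($\omega$, $\omega^\prime$, $\alpha_0$) contributes slack in the relevant inequalities, and confirming that the hypotheses of Lemma~\ref{cone} together with the definition of $\Omega$ provide enough control on $|\vec{t}_0|$ (through $|\vec{x}_0|$) to absorb the $\boldsymbol{\lambda}$-dependent term. Once the norm of $A$ is identified with $q^{\max_i\deg\eta_i}$, the exponents align transparently and everything fits within the slack built into $Q=\lceil\deg f/2-\deg g\rceil+\max_i\deg\eta_i+\omega^\prime$ and $R=\lceil\deg f/2-\deg g-\alpha_0/2\rceil$.
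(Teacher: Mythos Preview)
Your proof is correct and follows essentially the same approach as the paper's: bound the difference $G(\vec{t})-(F(\vec{t})-k/g)=2\boldsymbol{\lambda}^TA\vec{t}/g$ using $|\boldsymbol{\lambda}/g|<1$ together with the anisotropic-cone bound $|\vec{t}_0|\leq\hat{\omega'}^{1/2}|f|^{1/2}/|g|$, then expand $G(\vec{t}+\boldsymbol{\zeta})-G(\vec{t})$ and estimate each piece using $|\boldsymbol{\zeta}|\leq\min(|r|,\hat R)$. The only cosmetic difference is that the paper groups $2\vec{t}^TA\boldsymbol{\zeta}$ and $2\boldsymbol{\lambda}^TA\boldsymbol{\zeta}/g$ into a single term $2(\vec{t}+\boldsymbol{\lambda}/g)^TA\boldsymbol{\zeta}$, whereas you treat them separately; your more explicit tracking of the slack $q^{\omega'/2}$ and $q^{\omega'-1+\alpha_0/2}$ is a welcome addition over the paper's terser presentation. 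One small phrasing issue: $\omega'$ is fixed by the anisotropic cone rather than ``chosen large enough,'' but since $\omega'\geq 1$ and $\alpha_0\geq 2$ the required strict inequalities hold regardless.
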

\begin{proof}
Since $\vec{x}_0\in \Omega\cap X_f(\ki),$ by property~\eqref{prop3} of anisotropic cones for $\Omega$, $|\vec{t}_0+\frac{\boldsymbol{\lambda}}{g}|\leq |f|^{1/2} \hat{\omega^{\prime}}^{1/2}/|g|.$ We may assume that $\left|\frac{\boldsymbol{\lambda}}{g} \right|< 1,$ which implies    $|\vec{t}_0| \leq |f|^{1/2} \hat{\omega^{\prime}}^{1/2}/|g|.$ 
 From this   we obtain   $|\frac{1}{g}(2\boldsymbol{\lambda}^{\intercal} A\vec{t}_0)| <\hQ.$ For $|\vec{t}-\vec{t}_0|<\hR,$ $|\vec{t}|\leq \hQ$ and
 \[
 \left|\frac{2}{g}\boldsymbol{\lambda}^{\intercal} A\vec{t}\right| \leq \max\left(\left |\frac{2}{g}\boldsymbol{\lambda}^{\intercal} A\vec{t_0}\right|,    \left|\frac{2}{g}\boldsymbol{\lambda}^{\intercal} A(\vec{t}-\vec{t_0})\right| \right)<\hQ,
 \]
 where we used $|A|\hR \leq \hQ.$
  Hence, $|G(\vec{t})| < \hQ|r|$ is equivalent to $|F(\vec{t})-k/g|<\hQ |r|.$ Moreover, suppose that $|\boldsymbol{\zeta}| \leq \min(|r|,\hR).$  Then 
\[
|G(\vec{t}+\boldsymbol{\zeta})-G(\vec{t})| \leq \max\left( |F(\boldsymbol{\zeta})| ,|\boldsymbol{\zeta}^{\intercal}A(\vec{t}+\boldsymbol{\lambda}/g)|\right) \leq \max(|\boldsymbol{\zeta}^{\intercal}A\boldsymbol{\zeta}|,\hQ|\boldsymbol{\zeta}|)\leq \hQ |r|,
\]

where we used  $\frac{|\boldsymbol{\lambda}|}{|g|}<1 ,$ $|A\boldsymbol\zeta|\leq |A|\hR\leq \hQ.$ Hence, if $|G(\vec{t})|< \hQ|r|$, then
  \begin{equation}\label{invbal}|G(\vec{t}+\boldsymbol{\zeta})| \leq \max(|G(\vec{t})|, |G(\vec{t}+\boldsymbol{\zeta})-G(\vec{t})| )  < \hQ|r|.
  \end{equation}
This concludes the proof of our lemma.
\end{proof}
We say  $\vec{c}$ is an \textit{ordinary vector} if 
\begin{eqnarray}\label{typeI}
\kappa \geq \hQ/\hR.
\end{eqnarray}
\begin{lemma}\label{o}
 
Suppose that $\vec{c}$ is an ordinary vector and $|r| \leq\hQ$. Then, \begin{equation}\label{type1}
I_{g,r}(\vec{c})=0.
\end{equation}
\end{lemma}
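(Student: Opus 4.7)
The approach is to exploit translation invariance of the Haar measure together with the orthogonality relation in Lemma~\ref{lem:orthog}: I would average the integrand of $I_{g,r}(\vec{c})$ in~\eqref{intt} over small translations $\vec{t}\mapsto \vec{t}+\boldsymbol{\zeta}$ in a ball $\{|\boldsymbol{\zeta}|<\hat M\}$ chosen so that the domain of integration is preserved (giving the pointwise identity $I_{g,r}(\vec{c})=\psi(-\langle\vec{c},\boldsymbol{\zeta}\rangle/(gr))\,I_{g,r}(\vec{c})$), while averaging in $\boldsymbol{\zeta}$ activates nontrivial character orthogonality and annihilates $I_{g,r}(\vec{c})$.

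First I would select $\hat M:=|r|/\kappa$, the smallest radius for which $\int_{|\zeta|<\hat M}\psi(-c_{i_0}\zeta/(gr))\,d\zeta=0$, where $i_0$ is an index realising the maximum $|c_{i_0}|=|g|\kappa$. A short check from the definitions of $Q$ and $R$ shows $\hQ\geq \hR$, so the ordinary hypothesis gives $\kappa\geq \hQ/\hR\geq 1$ and hence $\hat M\leq |r|$; combined with $|r|\leq \hQ$ this also yields $\hat M\leq \hR$. Thus $\hat M\leq \min(|r|,\hR)$, which is precisely the regime in which Lemma~\ref{auxlem} can be applied.

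Next I would verify invariance of the integration domain under $\vec{t}\mapsto \vec{t}+\boldsymbol{\zeta}$ for every $\boldsymbol{\zeta}$ with $|\boldsymbol{\zeta}|<\hat M$. The condition $|\vec{t}-\vec{t}_0|<\hR$ is preserved by the ultrametric inequality since $|\boldsymbol{\zeta}|<\hR$, and the condition $|G(\vec{t})|<\hQ|r|$ is preserved by Lemma~\ref{auxlem}. A change of variables then gives the pointwise identity displayed above for every such $\boldsymbol{\zeta}$.

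Finally, integrating this identity in $\boldsymbol{\zeta}$ over $\{|\boldsymbol{\zeta}|<\hat M\}\subset K_\infty^d$ factorises as a product of one-dimensional Kubota integrals, yielding
\[
I_{g,r}(\vec{c}) \;=\; I_{g,r}(\vec{c})\cdot\prod_{i=1}^{d}\hat M^{-1}\!\int_{|\zeta_i|<\hat M}\!\psi\!\left(-\tfrac{c_i\zeta_i}{gr}\right)d\zeta_i .
\]
By the choice of $i_0$ we have $|c_{i_0}/(gr)|=\hat M^{-1}$, so the $i_0$-th factor vanishes by Lemma~\ref{lem:orthog} and we conclude $I_{g,r}(\vec{c})=0$. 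The only delicate point is choosing $\hat M$ so that the three inequalities $\hat M\geq |r|/\kappa$, $\hat M\leq |r|$, and $\hat M\leq \hR$ hold simultaneously; the ordinary-vector hypothesis $\kappa\geq \hQ/\hR$ together with the constraint $|r|\leq \hQ$ is precisely what makes this possible, and once the alignment is achieved the rest of the argument is formal.
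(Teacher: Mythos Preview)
Your proof is correct and follows essentially the same approach as the paper's: both exploit the translation invariance of the domain guaranteed by Lemma~\ref{auxlem} to average the integrand over a small ball in $\boldsymbol{\zeta}$, and then kill the integral via the orthogonality relation of Lemma~\ref{lem:orthog}. The only cosmetic difference is that the paper takes the averaging radius to be $\min(|r|,\hR)$ directly, whereas you use the slightly smaller $\hat M=|r|/\kappa$; either choice works once one checks (as you do) that $\kappa\geq\hQ/\hR\geq 1$ and $|r|\leq\hQ$ force the radius to lie in the range where both Lemma~\ref{auxlem} applies and the $i_0$-th character integral vanishes.
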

\begin{proof}
By \eqref{intt} and  \eqref{invbal}, we have \begin{equation*}
\begin{split}
I_{g,r}(\vec{c})&=\int_{\substack{|\vec{t}| < \hR \\ |G(\vec{t})|< \hQ|r|}} \frac{\hQ}{|r|}  \psi\left(\frac{\left< \vec{c},\vec{t} \right>}{gr}    \right) d\vec{t}= \frac{\hQ}{|r|} \int_{\substack{|\vec{t}| < \hR \\ |G(\vec{t})|< \hQ|r|}} \frac{1}{\min(|r|,\hR)^d} \int_{|\boldsymbol\zeta|<  \min(|r|,\hR)}  \psi\left(\frac{\left< \vec{c},\vec{t}+\boldsymbol{\zeta} \right>}{gr}    \right) d\boldsymbol\zeta d\vec{t}.
\end{split}
\end{equation*}
By Lemma~\ref{lem:orthog},
\(
 \int_{|\boldsymbol\zeta|< \min(|r|,\hR)}  \psi\left(\frac{\left< \vec{c},\boldsymbol{\zeta} \right>}{gr}    \right) d\boldsymbol\zeta=0
\) 
if $\left|\frac{\vec{c}}{gr}\right|\geq\frac{1}{\min(|r|,\hR)}$. We consider two cases. If $|r|\leq\hR$, then this is equivalent to $|\vec{c}|\geq |g|$. Since $\vec{c}$ is ordinary, $|\vec{c}|\geq |g|\hQ/\hR$. Since $\hQ\geq \hR$, we obtain the required inequality $|\vec{c}|\geq |g|$. On the other hand, if $|r|>\hR$, then we need to have $|\vec{c}|\geq\frac{|gr|}{\hR}$. However, since $|r|\leq\hQ$ and $\vec{c}$ is ordinary, $|\vec{c}|\geq |g|\hQ/\hR\geq \frac{|gr|}{\hR}$. This concludes the lemma. 
\end{proof}
We say $\vec{c}\neq 0$ is an \textit{exceptional vector} if  
\begin{equation}\label{typeII}\kappa<  \hQ/\hR.\end{equation}

\begin{proposition}\label{c}
Suppose that $\vec{c}$ is an exceptional vector and $\kappa\geq  \eta \frac{|r|}{\hR}$ and $d\geq 4$, where   $\eta> \max(\hat{\omega^*}, \hat{\omega'^*})$ is a fixed large enough constant integer.  For $\vec{c}\in \Omega^*$, we have  
 \begin{equation}
\label{excep}
|I_{g,r}(\vec{c})|\ll_{F,\Omega}  \hQ^d\Big(\frac{|\vec{c}|\hQ}{|gr|}   \Big)^{-\frac{d-1}{2}}.
\end{equation}
Otherwise, $\vec{c}\notin \Omega^*$  and $I_{g,r}(\vec{c})=0.$
\end{proposition}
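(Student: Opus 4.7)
The plan is to dualize the cutoff $\mathbf{1}_{|G(\vec{t})|<\hQ|r|}$ by writing it, via Lemma~\ref{lem:orthog}, as $\hQ|r|\int_{|\beta|<(\hQ|r|)^{-1}}\psi(\beta G(\vec{t}))d\beta$, turning $I_{g,r}(\vec{c})$ into
\[I_{g,r}(\vec{c})=\hQ^{2}\!\int_{|\beta|<(\hQ|r|)^{-1}}\!\int_{|\vec{t}-\vec{t}_0|<\hR}\psi\!\left(\beta G(\vec{t})+\tfrac{\langle\vec{c},\vec{t}\rangle}{gr}\right)d\vec{t}\,d\beta.\]
The inner phase is quadratic in $\vec{t}$, with Hessian $2\beta A$ and critical point $\vec{t}_c(\beta)=-\boldsymbol{\lambda}/g-A^{-1}\vec{c}/(2\beta gr)$; completing the square yields
\[\beta G(\vec{t})+\tfrac{\langle\vec{c},\vec{t}\rangle}{gr}=-\tfrac{\boldsymbol{\lambda}^{T}\vec{c}}{g^{2}r}-\tfrac{F^{*}(\vec{c})}{4\beta g^{2}r^{2}}-\tfrac{\beta f}{g^{2}}+\beta F(\vec{t}-\vec{t}_c),\]
so the inner integral becomes a Gaussian centered at $\vec{t}_c(\beta)$ times the critical-point phase.

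Next, two cases are distinguished according to whether $\vec{t}_c(\beta)$ lies in the support ball $B=\{|\vec{t}-\vec{t}_0|<\hR\}$. When $\vec{t}_c(\beta)\in B$, the ultrametric forces $B-\vec{t}_c(\beta)=\hR\,\TT^{d}$ after rescaling by $t^{R}$, and Lemma~\ref{gauint} (equivalently, Proposition~\ref{station} applied to the purely quadratic $F$) evaluates the inner integral as $\hR^{d}\prod_{i=1}^{d}\cG(\beta t^{2R}\eta_i)$ times the critical-point phase. When $\vec{t}_c(\beta)\notin B$, expanding around $\vec{t}_0$ reduces the integrand to the linear perturbation $\psi(2\beta(\vec{t}_0-\vec{t}_c)^{T}A\vec{z})$ plus an ultrametrically smaller quadratic correction; Lemma~\ref{lem:orthog} then forces vanishing as soon as $|\beta||\vec{t}_0-\vec{t}_c|\geq\hR^{-1}$. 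Rewriting the support condition in the variable $\vec{s}=g\vec{t}+\boldsymbol{\lambda}$ gives $|\vec{s}_c-\vec{x}_0|<|t^{-\alpha_0}f|^{1/2}$ with $\vec{s}_c=-A^{-1}\vec{c}/(2\beta r)$; since $\Omega^{*}=A\Omega$ is an anisotropic cone for $F^{*}$ by Lemma~\ref{dualcone}, the condition $\vec{c}\notin\Omega^{*}$ forces $\vec{s}_c\notin\Omega$, so Lemma~\ref{ultra} yields the uniform bound $|\vec{s}_c-\vec{x}_0|\gtrsim|\vec{x}_0|/\hat{\omega}\asymp|f|^{1/2}/\hat{\omega}$, exceeding the support radius for sufficiently large $\alpha_0$. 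Combined with the hypothesis $\kappa\geq\eta|r|/\hR$ (with $\eta>\hat{\omega}$) that drives the non-stationary threshold $|\beta||\vec{t}_0-\vec{t}_c|\geq\hR^{-1}$ for every admissible $\beta$, this gives $I_{g,r}(\vec{c})=0$ whenever $\vec{c}\notin\Omega^{*}$.

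For $\vec{c}\in\Omega^{*}$ the outer $\beta$-integral becomes
\[I_{g,r}(\vec{c})=\hQ^{2}\hR^{d}\psi\!\left(-\tfrac{\boldsymbol{\lambda}^{T}\vec{c}}{g^{2}r}\right)\!\int\psi\!\left(-\tfrac{F^{*}(\vec{c})}{4\beta g^{2}r^{2}}-\tfrac{\beta f}{g^{2}}\right)\prod_{i=1}^{d}\cG(\beta t^{2R}\eta_i)\,d\beta,\]
with $\beta$ restricted by the previous step to the annulus where $\vec{t}_c(\beta)\in B$. The substitution $x=-\beta f/g^{2}$ turns the phase into $\alpha/x+x$ with $\alpha=fF^{*}(\vec{c})/(4g^{4}r^{2})$, while by~\eqref{epsfact} the amplitude collapses to $|x|^{-d/2}$ times either $1$ (for $d$ even, producing $\Kl_\infty(\alpha,\psi)$) or the sign $\varepsilon_{x}$ (for $d$ odd, producing $\Sa_\infty(\alpha,\psi)$). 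The Weil bound $|\Kl_\infty(\alpha,\psi)|,|\Sa_\infty(\alpha,\psi)|\ll|\alpha|^{1/4}$, the anisotropy estimate $|F^{*}(\vec{c})|\gg|\vec{c}|^{2}$ on $\Omega^{*}$, and the scalings $\hQ\asymp\hR\asymp|f|^{1/2}/|g|$ then deliver $|I_{g,r}(\vec{c})|\ll\hQ^{d}(|\vec{c}|\hQ/|gr|)^{-(d-1)/2}$ after routine bookkeeping of powers of $|f|,|g|,|r|,|\vec{c}|$. The main obstacle is the middle step: calibrating $\alpha_0,\omega,\omega',\eta$ so that the ultrametric gap from Lemma~\ref{ultra} exceeds the support radius uniformly in $\beta$ when $\vec{c}\notin\Omega^{*}$, while simultaneously pinning down the precise $\beta$-annulus in which all $\beta t^{2R}\eta_i$ have positive valuation so that $\prod_{i}\cG$ is a clean power of $|\beta|^{-d/2}$---both ingredients being necessary to reach the sharp exponent $-(d-1)/2$.
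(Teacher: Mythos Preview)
Your plan is essentially the paper's own approach: dualize the cutoff via Lemma~\ref{lem:orthog}, complete the square in the quadratic phase, evaluate the stationary contribution as a product of one-dimensional Gaussians $\prod_i\cG(\beta t^{2R}\eta_i)$, and then recognise the remaining $\beta$-integral as a $\Kl_\infty$/$\Sa_\infty$ to which the Weil bound applies. The identification of $\vec{s}_c=-A^{-1}\vec{c}/(2\beta r)$ and the use of Lemma~\ref{ultra} against $\vec{x}_0$ to handle $\vec{c}\notin\Omega^*$ is exactly the paper's argument, only phrased vectorially rather than coordinate-by-coordinate.

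The one place where your sketch is too quick is the non-stationary step. The dichotomy ``$\vec{t}_c(\beta)\in B$ versus $\vec{t}_c(\beta)\notin B$'' is coarser than what is actually needed: $\vec{t}_c\notin B$ only says that $\max_i|\mu_i|\geq\hR$ (where $\mu_i$ are the diagonalised components of $\vec{t}_0-\vec{t}_c$), and in general some coordinates will have $|\mu_i|<\hR$ while others do not. In that mixed situation the quadratic correction $\beta F(\vec{z})$ is \emph{not} ultrametrically dominated by the linear term $2\beta(\vec{t}_0-\vec{t}_c)^{T}A\vec{z}$ in every direction, so a single application of Lemma~\ref{lem:orthog} does not kill the integral. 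The paper handles this by diagonalising $F$ up front via $\gamma\in GL_d(\cO_\infty)$ and splitting the index set into $CR=\{i:|\kappa_i|<\hR\}$ and $NCR=\{i:|\kappa_i|\geq\hR\}$; on $NCR$ one makes the analytic change of variables $v_i=z_i+\kappa_i^{-1}z_i^2\in\cA_\infty(\TT)$ to linearise, while on $CR$ one completes the square and gets the Gaussian. The vanishing for $\vec{c}\notin\Omega^*$ then comes from showing (i) $NCR\neq\emptyset$ via Lemma~\ref{ultra}, and (ii) the linearised $NCR$ factor is a genuinely oscillating character, which is where the second application of Lemma~\ref{ultra} together with $\kappa\geq\eta|r|/\hR$ is used.

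For the $\vec{c}\in\Omega^*$ bound, the paper also needs the coordinatewise split to argue that, for $\eta$ large, the $NCR$ window $\hR\leq|\kappa_i|<|r|\hQ/(\hR|\alpha\eta_i|)$ is empty on the relevant $\alpha$-shell, so that \emph{all} indices are $CR$ and the Gaussian formula applies uniformly; only then does the $\beta$-integral collapse to a single $\Kl_\infty$ or $\Sa_\infty$. Your statement that the amplitude ``collapses to $|x|^{-d/2}$ times $1$ or $\varepsilon_x$'' is also a slight oversimplification---the factors $\cG(\beta t^{2R}\eta_i)$ carry $\min(1,\cdot)$ truncations and a sign $\pm\varepsilon_\alpha^v$ whose parity depends on all the $\eta_i$, not just on $d\bmod 2$---but this does not affect the final bound. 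In short: correct strategy, same as the paper, but you should diagonalise before the stationary/non-stationary split rather than after.
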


We give the proof of the above proposition after proving an auxiliary lemma. For $\alpha \in K_{\infty}$ with $|\alpha|^{1/2}={\hl},$ define 
\[
\Kl_{\infty}(\alpha,\psi):=\int_{|x|=\hl} \psi\left(\frac{\alpha}{ x}+x\right) dx,
\]
and 
\[
\Sa_{\infty}(\alpha,\psi):=\int_{|x|=\hl} \varepsilon_{x} \psi\left(\frac{\alpha}{ x}+x\right) dx,
\]
where $\varepsilon_x$ were defined in \eqref{epsfact}.  These are the analogues of the classical Kloosterman and Sali\'e sums defined as:
\[
\Kl(\alpha^{\prime},\FF_q):= \sum_{x\in \FF_q^* }  e_q\left(\frac{\alpha'}{ x}+x\right)
\]
and 
\[
\Sa(\alpha^{\prime},\FF_q):=  \sum_{x\in \FF_q^* } \chi(x) e_q\left(\frac{\alpha'}{ x}+x\right)
\]
for $\alpha'\in \FF_q,$ where $\chi$ is the quadratic character on $\FF^*_q$. By Weil's estimate on Kloosterman and Sali\'e sums, 
 \(
|\Kl(\alpha^{\prime},\FF_q)| \leq 2q^{1/2} \text { and } \Sa(\alpha^{\prime},\FF_q)  \leq 2q^{1/2}.
 \)

For $\alpha\in \Ki$ and $l\in \mathbb{Z}$,  define
\begin{equation}\label{Binf}
\begin{split}
B_{\infty}(\psi,l,\alpha):=\int_{|x|=\hl} \psi(\frac{\alpha}{ x}+x) dx,
\\
\widetilde{B}_{\infty}(\psi,l,\alpha):=\int_{|x|=\hl} \varepsilon_{x} \psi(\frac{\alpha}{ x}+x) dx.
\end{split}
\end{equation}
   We write  $\alpha= t^{2l+k}\alpha^{\prime} (1+\tilde{\alpha})$ and  $x=t^lx^{\prime}(1+\tilde{x})$ for unique $\tilde{\alpha},\tilde{x}\in \TT $ and $\alpha^{\prime}, x^{\prime} \in \FF_q^*.$  Note that for $k=0,$ we have   $B_{\infty}(\psi,l,\alpha)=\Kl_{\infty}(\psi,\alpha)$ and $\widetilde{B}_{\infty}(\psi,l,\alpha)=\Sa_{\infty}(\psi,\alpha)$.
In the following lemma, we give an explicit formula for  $B_{\infty}(\psi,l,\alpha)$ in terms of the Kloosterman sums; see  \cite[Lemma 3.4]{CPS} for a similar calculation.  
\begin{lemma}\label{Bessel}

We have 
\[
B_{\infty}(\psi,l,\alpha)=\begin{cases} 
(q-1)\hl &\text{ if } \max(l+k,l)<-1, \text{ and } k\neq 0,
 \\
-\hl &\text{ if } \max(l+k,l)=-1, \text{ and } k\neq 0,
\\
0   &\text{ if } \max(l+k,l)>-1, \text{ and } k\neq 0. \end{cases}
\]
\[
\Kl_{\infty}(\psi,\alpha)=\begin{cases} 
(q-1)\hl &\text{ if } l<-1,
\\
\hl \Kl(\alpha^{\prime},\FF_q) &\text{ if } l=-1,
\\
\hl\sum_{{x^{\prime}}^2= \alpha^{\prime}}  \psi\Big(2t^{l}x^{\prime}(1+\tilde{\alpha})^{1/2} \Big) \cG(x^{\prime}t^l)   &\text{ if } \alpha^{\prime} \text{ is a quadratic residue,}
\\
0   &\text{ if } \alpha^{\prime} \text{ is not a quadratic residue.} \end{cases}
\]
Similarly,  
\[
\widetilde{B}_{\infty}(\psi,l,\alpha)=\begin{cases} 
(q-1)\hl &\text{ if } \max(l+k,l)<-1, \text{ and } k\neq 0,
 \\
-\hl &\text{ if } l+k=-1, \text{ and } k> 0,
\\
\hl \tau_{\psi} &\text{ if } l=-1, \text{ and } k< 0,
\\
0   &\text{ if } \max(l+k,l)>-1, \text{ and } k\neq 0. \end{cases}
\]
where $\tau_{\psi}:=\sum_{a\in \FF_q}e_q(a)\chi(a),$ where $\chi$ is the quadratic character in $\FF_q$. Finally,
\[
\Sa_{\infty}(\psi,\alpha)=\begin{cases} 
(q-1)\hl &\text{ if } l<-1,
\\
\hl \Sa(\alpha^{\prime},\FF_q) &\text{ if } l=-1,
\\
\hl\sum_{{x^{\prime}}^2= \alpha^{\prime}}  \psi\Big(2t^{l}x^{\prime}(1+\tilde{\alpha})^{1/2} \Big) \cG(x^{\prime}t^l)   &\text{ if } \alpha^{\prime} \text{ is a quadratic residue,}
\\
0   &\text{ if } \alpha^{\prime} \text{ is not a quadratic residue.} \end{cases}
\]
In particular, 
 \(
 \Kl_{\infty}(\psi,\alpha) \ll |\alpha|^{1/4} \text { and } \Sa_{\infty}(\psi,\alpha)  \ll |\alpha|^{1/4}.
 \)  
\end{lemma}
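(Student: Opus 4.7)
The plan is to parametrize the sphere $\{|x|_\infty = \hat l\}$ via $x = t^l(y' + u)$ with $y' \in \FF_q^*$ and $u \in \TT$, a disjoint union of $q-1$ cosets with $dx = \hat l\, du$ by Lemma~\ref{lem:change}. This gives
\[
B_\infty(\psi, l, \alpha) = \hat l \sum_{y' \in \FF_q^*} \int_{\TT} \psi\bigl(t^l z_{y'}(u)\bigr)\, du, \qquad z_{y'}(u) := \frac{\alpha'(1+\tilde\alpha)}{y' + u} + (y' + u),
\]
and analogously for $\widetilde B_\infty$ with an added $\varepsilon_x$ weight; since $\varepsilon_x$ depends only on the top coefficient $y'$ of $x$, it factors out of the $u$-integral as $\chi(y')\varepsilon_1$. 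Because $\psi$ reads only the coefficient of $t^{-1}$, in each case I analyze the $t^{-1-l}$-coefficient of $z_{y'}(u)$ as an element of $\FF_q[[a_{-1}, a_{-2}, \ldots]]$, where $u = \sum_{i \leq -1} a_i t^i$, and apply orthogonality of additive characters on $\FF_q$.

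The extreme cases are immediate. If $\max(l+k,l) < -1$, both $\alpha/x$ and $x$ lie in $t^{-2}\cO_\infty$, so $\psi(\alpha/x + x) \equiv 1$ and the integral equals the Haar measure $(q-1)\hat l$. If $\max(l+k,l) > -1$ with $k \neq 0$, expanding $z_{y'}$ in powers of $u$ shows that for every $y'$ the $t^{-1-l}$-coefficient is an affine function of some $a_{-j}$ with nonzero $\FF_q$-linear part (the relevant $j$ being determined by whichever of $x$ or $\alpha/x$ dominates), so character orthogonality kills the $u$-integral. In the boundary case $\max(l+k,l) = -1$ with $k \neq 0$, exactly one of $x$ or $\alpha/x$ contributes a nontrivial $u$-independent $t^{-1}$-coefficient of the form $c/y'$ (or $cy'$) for $c \in \FF_q^*$; the sum $\sum_{y' \in \FF_q^*} e_q(c/y') = -1$ yields $-\hat l$ for $B_\infty$, while for $\widetilde B_\infty$ the extra $\chi(y')$ weight converts this into a Gauss sum, producing the asserted $\hat l \tau_\psi(\varepsilon)$ in the stated sub-case.

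For the Kloosterman/Sali\'e specialization $k = 0$, the cases $l < -1$ and $l = -1$ are now immediate: for $l = -1$ the $t^{-1}$-coefficient of $\alpha/x + x$ equals $\alpha'/y' + y'$, independent of $u$, and the $y'$-sum is the classical Kloosterman sum $\Kl(\alpha', \FF_q)$, with the $\varepsilon_x$-weighted variant giving $\Sa(\alpha', \FF_q)$. The main content is $l \geq 0$: the function $z_{y'}$ has a critical point iff $z_{y'}'(u_0) = 0$, which forces $(y' + u_0)^2 = \alpha'(1+\tilde\alpha)$. This requires $(y')^2 = \alpha'$, so $\alpha'$ must be a quadratic residue and $y' = x'$ with $(x')^2 = \alpha'$, after which $u_0 = x'\bigl((1+\tilde\alpha)^{1/2} - 1\bigr) \in \TT$. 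When $\alpha'$ is not a QR the oscillatory argument from the previous paragraph gives $0$ uniformly in $y'$; when $\alpha'$ is a QR I translate $u = u_0 + v$ (which preserves $\TT$ and its Haar measure by Proposition~\ref{auto}), apply the function-field Morse lemma (Proposition~\ref{prop:morse}) to write $z_{x'}(u_0 + v) = z_{x'}(u_0) + H v^2$ in normal form, and evaluate the resulting Gaussian integral via Lemma~\ref{gauint}. A direct computation gives $z_{x'}(u_0) = 2 x'(1+\tilde\alpha)^{1/2}$, yielding the asserted $\psi\bigl(2 t^l x'(1+\tilde\alpha)^{1/2}\bigr)$ factor, and the Hessian value produces the stated $\cG$-factor.

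The main obstacle will be the $l \geq 0$ Morse analysis: verifying that $u_0 \in \TT$ (it is, since $(1+\tilde\alpha)^{1/2} - 1 \in \TT$), checking that the Hessian is a unit in $\cO_\infty$ so Proposition~\ref{prop:morse} applies, and carefully tracking the top-degree coefficient data that determines the $\varepsilon$-sign inside $\cG$ while matching the resulting Gaussian against the asserted $\cG(2 x' t^l)$.
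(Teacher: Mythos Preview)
Your proposal is correct and follows essentially the same architecture as the paper: parametrize $|x|=\hat l$ by a leading coefficient in $\FF_q^*$ and a tail in $\TT$, dispose of the non-stationary regimes by character orthogonality, and handle the stationary case $k=0$, $(y')^2=\alpha'$ via the Morse lemma and the Gaussian integral. The only stylistic difference is that for the non-stationary cases the paper packages your ``affine in $a_{-j}$'' observation into a single analytic change of variables: it sets the new coordinate equal to the (normalized) phase itself, checks this lies in $\cA_\infty(\TT)$ (derivative of unit norm at the origin), and then invokes Proposition~\ref{auto} to reduce to $\int_\TT\psi(t^{l+k}u)\,du$ or $\int_\TT\psi(t^l v)\,dv$; this avoids the coefficient bookkeeping but is logically equivalent to what you wrote.

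One small correction: your translation $u=u_0+v$ is \emph{not} an element of $\cA_\infty(\TT)$ (those are required to fix the origin), so Proposition~\ref{auto} is the wrong citation. The step is of course valid for the simpler reason that $\TT$ is an additive group and its Haar measure is translation-invariant; the paper glosses over this and simply says ``by the stationary phase theorem'' at the shifted critical point.
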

\begin{proof}
Suppose that $k>0$.  We have 
\begin{equation*}
\begin{split}
B_{\infty}(\psi,l,\alpha)=\int_{|x|=\hl} \psi(\frac{\alpha}{ x}+x)dx=\hl \sum_{x^{\prime}\in \FF_q^{*}} \int_{\TT} \psi\Big(\frac{ t^{l+k}\alpha^{\prime} (1+\tilde{\alpha})}{x^{\prime}(1+\tilde{x})} + t^lx^{\prime}(1+\tilde{x}) \Big) d\tilde{x}.
\end{split}
\end{equation*}
Fix $\tilde{\alpha}\in \TT $ and $\alpha^{\prime}, x^{\prime} \in \FF_q^*,$ and define the analytic function $u(\tilde{x})$ as 
\[
u(\tilde{x}):=\frac{ \alpha^{\prime} (1+\tilde{\alpha})}{x^{\prime}(1+\tilde{x})} + t^{-k}x^{\prime}(1+\tilde{x})- \left(  \frac{ \alpha^{\prime} (1+\tilde{\alpha})}{x^{\prime}} + t^{-k}x^{\prime} \right), 
\]
where $\tilde{x}\in\TT.$ We note that $u(0)=0,$ and  $|\frac{\partial u}{\partial \tilde{x}}(0)|=|-\frac{ \alpha^{\prime} (1+\tilde{\alpha})}{(1+\tilde{x})^2x^{\prime}} + t^{-k}x^{\prime}|=1.$ Hence    $u\in \cA_{\infty}(\TT).$
By changing the variable to $u(\tilde{x})$ and Proposition~\ref{auto}, we have 
\[
B_{\infty}(\psi,l,\alpha)=\hl \sum_{x^{\prime}\in \FF_q^{*}}  \psi\Big(\frac{\alpha^{\prime} (1+\tilde{\alpha}) t^{l+k}}{x^{\prime}}+x^{\prime}t^{l} \Big)  \int_{\TT} \psi( t^{l+k}u)du=\begin{cases} 
(q-1)\hl &\text{ if } l+k<-1,
\\
-\hl &\text{ if } l+k=-1,
\\
0   &\text{otherwise.} \end{cases}
\]
On the other hand, suppose that  $k<0$. 
Fix $\tilde{\alpha}\in \TT $ and $\alpha^{\prime}, x^{\prime} \in \FF_q^*,$ and define the analytic function $v(\tilde{x})$ as 
\[
v(\tilde{x}):=t^{k}\frac{ \alpha^{\prime} (1+\tilde{\alpha})}{x^{\prime}(1+\tilde{x})} + x^{\prime}(1+\tilde{x})- \left( t^{k}  \frac{ \alpha^{\prime} (1+\tilde{\alpha})}{x^{\prime}} +x^{\prime} \right), 
\]
where $\tilde{x}\in\TT.$ We note that $|\frac{\partial v}{\partial \tilde{x}}(0)|=|-\frac{ t^{k}\alpha^{\prime} (1+\tilde{\alpha})}{(1+\tilde{x})^2x^{\prime}} + x^{\prime}|=1.$ Hence    $v\in \cA_{\infty}(\TT).$
By changing the variable to $v(\tilde{x})$ and Proposition~\ref{auto}, we have 
\[
B_{\infty}(\psi,l,\alpha)=\hl \sum_{x^{\prime}\in \FF_q^{*}}  \psi\Big(\frac{\alpha^{\prime} (1+\tilde{\alpha}) t^{l+k}}{x^{\prime}}+x^{\prime}t^{l} \Big)  \int_{\TT} \psi( t^{l}v)dv=\begin{cases} 
(q-1)\hl &\text{ if } l<-1,
\\
-\hl &\text{ if } l=-1,
\\
0   &\text{otherwise.} \end{cases}
\]
Finally suppose that $k=0.$ Fix $\tilde{\alpha}\in \TT $ and $\alpha^{\prime}, x^{\prime} \in \FF_q^*.$
\[
\Kl_{\infty}(\psi,\alpha) =\hl \sum_{x^{\prime}\in \FF_q^{*}} \int_{\TT} \psi\Big(\frac{ t^{l}\alpha^{\prime} (1+\tilde{\alpha})}{x^{\prime}(1+\tilde{x})} + t^lx^{\prime}(1+\tilde{x}) \Big) d\tilde{x}.
\]
 Suppose that ${x^{\prime}}^2\neq \alpha^{\prime}$ in $\FF_q,$  and  define the analytic function $w(\tilde{x})$ as 
\[
w(\tilde{x}):=\frac{ \alpha^{\prime} (1+\tilde{\alpha})}{x^{\prime}(1+\tilde{x})} + x^{\prime}(1+\tilde{x})- \left(   \frac{ \alpha^{\prime} (1+\tilde{\alpha})}{x^{\prime}} +x^{\prime} \right), 
\]
where $\tilde{x}\in\TT.$ We note that 
$|\frac{\partial w}{\partial \tilde{x}}(0)|=|-\frac{ \alpha^{\prime} (1+\tilde{\alpha})}{(1+\tilde{x})^2x^{\prime}} + x^{\prime}|=|-\frac{ \alpha^{\prime} (1+\tilde{\alpha})-{(1+\tilde{x})^2x^{\prime}}^2}{(1+\tilde{x})^2x^{\prime}} |=1$ and    $w\in \cA_{\infty}(\TT).$ Otherwise ${x^{\prime}}^2= \alpha^{\prime}$ in $\FF_q$. Define $x_0:=(1+\tilde{\alpha})^{1/2}-1\in\TT$ and 
\[
h(\tilde{x}):=\frac{ \alpha^{\prime} (1+\tilde{\alpha})}{x^{\prime}(1+\tilde{x})} + x^{\prime}(1+\tilde{x})- 2x^{\prime}(1+\tilde{\alpha})^{1/2} .
\]
It is easy to see that $h(x_0)=0$, $\frac{\partial{h}}{\partial \tilde{x}}(x_0)=0$ and $\frac{\partial^2{h}}{\partial^2 \tilde{x}}(x_0)=\frac{2x^{\prime}}{(1+\tilde{\alpha})^{1/2}}$. Hence $x_0$ is a critical point with $|\frac{\partial^2{h}}{\partial^2 \tilde{x}}(x_0)|=1.$ By Proposition~\ref{station}, we have 

\begin{align*}
\hl \sum_{x'^2=\alpha ' } \int_{\TT} \psi\Big(\frac{ t^{l}\alpha^{\prime} (1+\tilde{\alpha})}{x^{\prime}(1+\tilde{x})} + t^lx^{\prime}(1+\tilde{x}) \Big) d\tilde{x}&=\hl \sum_{x'^2=\alpha ' }  \psi\Big(2t^{l}x^{\prime}(1+\tilde{\alpha})^{1/2} \Big) \int \psi(t^lh(\tilde{x})) d\tilde{x}
\\&= \hl\sum_{{x^{\prime}}^2= \alpha^{\prime}}  \psi\Big(2t^{l}x^{\prime}(1+\tilde{\alpha})^{1/2} \Big) \cG(x^{\prime}t^l).
\end{align*}

Therefore, 
\[
\Kl_{\infty}(\psi,\alpha)=\hl \sum_{{x^{\prime}}^2\neq \alpha^{\prime}}  \psi\Big(\frac{\alpha^{\prime} (1+\tilde{\alpha})t^l}{x^{\prime}}+x^{\prime}t^{l} \Big)  \int_{\TT} \psi( t^{l}w)dw+\hl\sum_{{x^{\prime}}^2= \alpha^{\prime}}  \psi\Big(2t^{l}x^{\prime}(1+\tilde{\alpha})^{1/2} \Big) \cG(x^{\prime}t^l) \]
Suppose that $\alpha^{\prime}$ is a quadratic non-residue in $\FF_q^*.$ Then, from above it follows that
 \[
\Kl_{\infty}(\psi,\alpha)=\begin{cases} 
(q-1)\hl &\text{ if } l<-1,
\\
\hl \Kl(\alpha^{\prime},\FF_q) &\text{ if } l=-1,
\\
0   &\text{otherwise.} \end{cases}
\]
Finally, assume that $\alpha^{\prime}$ is a quadratic residue in $\FF_q^*.$ We have 
 \[
\Kl_{\infty}(\psi,\alpha)=\begin{cases} 
(q-1)\hl &\text{ if } l<-1,
\\
\hl \Kl(\alpha^{\prime},\FF_q) &\text{ if } l=-1,
\\
\hl\sum_{{x^{\prime}}^2= \alpha^{\prime}}  \psi\Big(2t^{l}x^{\prime}(1+\tilde{\alpha})^{1/2} \Big) \cG(x^{\prime}t^l)   &\text{otherwise.} \end{cases}
\]
This concludes the proof of the first part of the lemma. By Weil's bound on  Kloosterman sums and \eqref{epsfact}, it is easy to check that $\Kl_{\infty}(\psi,\alpha) \ll (\hat{l}) ^{1/2} =|\alpha|^{1/4}.$ 
\\
\\
 The arguments for $\widetilde{B}_{\infty}(\psi,l,\alpha)$ and $\Sa_{\infty}(\psi,\alpha)$ of the lemma follow along the same lines, and we skip the details.  
\end{proof}

\begin{proof}[Proof of Proposition~\ref{c}]
By Lemma~\ref{auxlem},  $|G(\vec{t})| < \hQ|r|$ is equivalent to $|F(\vec{t})-k/g|<\hQ |r|$ for $|\vec{t}-\vec{t}_0|<\hR.$ By Lemma~\ref{lem:orthog}, we have 
$$
\int_{\TT} \psi\Big(\frac{\alpha} {rt^Q } (F(\vec{t})-k/g)\Big) d \alpha=\begin{cases}
 1, &\mbox{if $|F(\vec{t})-k/g|< \hQ|r|$},\\
0, &\mbox{otherwise.}
\end{cases}
$$
We replace the above integral for detecting $|F(\vec{t})-k/g|< \hQ|r|.$ Hence, by \eqref{intt}
\[
I_{g,r}(\vec{c})=\frac{\hQ}{|r|} \int _{\TT}\int_{|\vec{t}-\vec{t}_0| <\hR}\psi\left(\frac{\left< \vec{c},\vec{t} \right>}{gr} +\frac{\alpha} {rt^Q } (F(\vec{t})-k/g)   \right) d\vec{t} d\alpha.\]
Recall  that $F(\gamma\vec{y})=\sum_{\eta_i} \eta_iy_i^2$ for some  $\gamma\in \GL_d(\cO_{\infty}).$  We change  variables to $\vec{y}=\begin{bmatrix}y_1 \\ \vdots \\y_d  \end{bmatrix}=\gamma^{-1} \vec{t},$ and obtain 
 \[
\frac{\left< \vec{c},\vec{t} \right>}{gr} +\frac{\alpha} {rt^Q } (F(\vec{t})-k/g)=\frac{-\alpha k}{rgt^Q}+ \frac{1}{r} \Big(\sum_{i} \frac{c_i^{\prime}y_i}{g} +\frac{\alpha  \eta_i y_i^2}{t^Q}   \big),
 \]
 where $\vec{c}':=\begin{bmatrix}c_1^{\prime} \\ \vdots \\c_d^{\prime}  \end{bmatrix}:=\gamma^{\intercal } \vec{c}.$ Let $\vec{y}_0:= \begin{bmatrix}{y_1}_0 \\ \vdots \\{y_d}_0 \end{bmatrix}:= \gamma^{-1}\vec{t}_0.$ Then 
 $\gamma$ is a bijection between  $\left\{\vec{t}\in K_\infty^d: \ |\vec{t}-\vec{t}_0|<\hR  \right\}$ and  $\left\{\vec{y}\in K_\infty^d: \ |\vec{y}-\vec{y}_0|<\hR  \right\}.$
 Hence,
 \(
I_{g,r}(\vec{c})=\frac{\hQ}{|r|} \int _{\TT} \psi(\frac{-\alpha k}{rgt^Q})  I_{g,r}(\alpha,\vec{c})d\alpha,\)
where 
\[
I_{g,r}(\alpha,\vec{c}):=\prod_{i=1}^{d}\int_{|y_i-{y_i}_0|<\hR} \psi \left( \frac{1}{r} \left( \frac{c_i^{\prime}y_i}{g} +\frac{\alpha  \eta_i y_i^2}{t^Q}   \right)\right) dy_i.
 \]
  We write $z_i:=y_i-{y_i}_0.$ We have  
 \[
I_{g,r}(\alpha,\vec{c}):=\prod_{i=1}^{d}\int_{|z_i|<\hR} \psi \left( \frac{1}{r} \left( \frac{c_i^{\prime}(z_i+{y_i}_0)}{g} +\frac{\alpha  \eta_i (z_i+{y_i}_0)^2}{t^Q}   \right)\right) dz_i,
 \] 
 The phase function has a critical point at $\frac{-c^{\prime}_i t^Q}{2g\eta_i \alpha}-{y_i}_0,$ when $\alpha\neq 0.$ Without loss of generality, we may assume that $\alpha\neq 0$ as $\{0\}$ has measure $0$ in $\TT$. This critical point is inside the domain of the integral if 
 $|\kappa_i|<\hR,$ where $\kappa_i:=\frac{c^{\prime}_i t^Q}{g\eta_i \alpha}+2{y_i}_0.$  Note that $\kappa_i$ is a function of $\alpha.$  Let $\boldsymbol{\kappa}:= \begin{bmatrix}\kappa_1 \\ \vdots \\ \kappa_d \end{bmatrix}.$
  Given $\alpha\in\TT$, we partition the indices into: 
\begin{equation*}
\begin{split}
CR:=\left\{1\leq i\leq d: |\kappa_i|<\hR  \right\},
\\
NCR:=\left\{1\leq i\leq d: |\kappa_i|\geq \hR \right\}.
\end{split}
\end{equation*}
For $i\in NCR,$ we change the variables to 
\(
v_i=z_i + \kappa_i^{-1} z_i^2
\). For $i\in CR$, we change the variables to  $w_i= z_i+\kappa_i/2.$   Hence,
\begin{equation}\label{oceq1}
\begin{split}
I_{g,r}(\alpha,\vec{c})= \prod_{i\in NCR} \psi \left( \frac{1}{r} \left( \frac{c_i^{\prime}{y_i}_0}{g} +\frac{\alpha  \eta_i {y_i}_0^2}{t^Q}   \right)\right)  \int_{|v_i|<\hR}\psi\left(  \frac{\alpha \eta_i }{rt^{Q}} \kappa_i v_i \right) dv_{i}
\\
\times \prod_{i\in CR}\psi (-\frac{t^Q{c_i^{\prime}}^2}{4rg^2\eta_i \alpha}) \int_{|w_i|<\hR}\psi
\left( \frac{\alpha \eta_i }{rt^{Q}}w_i^2  \right) dw_i.
\end{split}
\end{equation}
By Lemma~\ref{lem:orthog} and  Lemma~\ref{ggg}, we have 
\begin{equation}\label{osceq2}
\int_{|v_i|<\hR}\psi\left(    \frac{\alpha \eta_i }{rt^{Q}} \kappa_i v_i   \right) dv_{i}=\begin{cases}
\hR, &\mbox{if $ |\frac{\alpha \eta_i }{rt^{Q}} \kappa_i | <1/\hR$  },\\
0, &\mbox{otherwise,}
\end{cases}
\end{equation}
\[
 \int_{|w_i|<\hR}\psi
\left( \frac{\alpha \eta_i }{rt^{Q}}w_i^2  \right) dw_i=\hR\cG\left(\frac{\alpha \eta_i t^{2R}}{rt^Q}\right).
\]
Note that 
\[
\boldsymbol{\kappa}= \gamma^{-1} \left( \frac{t^Q}{g\alpha}A^{-1} \vec{c}+2\vec{t}_0  \right).
\]
Since $\gamma^{-1} \in \GL_d(\cO_{\infty}),$ by Lemma~\ref{opnorm} 
\[
|\boldsymbol{\kappa}|=  \left|  \frac{t^Q}{g\alpha}A^{-1} \vec{c}+2\vec{t}_0\right|.
\]
By Lemma~\ref{t0} and its proof, $\vec{t}_0\in \Omega$ and $|\vec{t}_0|/\hat{\omega} \geq \hR$.  Suppose that $\vec{c}\notin\Omega^*.$ Then  $ \frac{t^Q}{g\alpha}A^{-1} \vec{c} \notin \Omega.$  By Lemma~\ref{ultra} applied to $\Omega$, 
\[
 |\boldsymbol{\kappa}| \geq |\vec{t}_0|/\hat{\omega} \geq \hR.
\]
 On the other hand, recall that  $\kappa:=\frac{|\vec{c}|}{|g|}.$  Since $\vec{c}'=\gamma^{\intercal } \vec{c}$ and $\gamma\in \GL_d(\cO_{\infty}),$ $\kappa=\frac{|\vec{c}^{\prime}|}{|g| }.$   Since $\vec{c}\notin\Omega^*$ and $A\vec{t}_0\in\Omega^*$, by Lemmas~\ref{ultra} and~\ref{dualcone} applied to $\Omega^*$, 
\[
\max_{1\leq i\leq d} \left|\kappa_i \frac{\alpha \eta_i }{t^{Q}} \right| =\max_{1\leq i\leq d}  \left(\frac{c_i^{\prime} }{g}+\frac{2\alpha \eta_i {y_i}_{0}}{t^Q} \right) = \left| \frac{\vec{c}}{g}+ \frac{2\alpha}{t^Q}A\vec{t}_0\right|
\geq  \kappa/\hat{\omega^*}    
\]
 By our assumption,   $\kappa \geq \eta  \frac{|r|}{\hR}. $ Since $\eta>\hat{\omega^*},$  
 
\[ \max_{1\leq i\leq d} |\frac{\alpha \eta_i }{rt^{Q}} \kappa_i | \geq \frac{\kappa}{|r|\hat{\omega^*} }\geq     1/\hR .\]
By equations \eqref{oceq1} and~\eqref{osceq2}, we have    $I_{g,r}(\vec{c})=0$ for   $\vec{c}\notin\Omega^*.$
 \\
 
 Next, we suppose that $\vec{c}\in\Omega^*$ and prove inequality~\eqref{excep}. We show that    $I_{g,r}(\alpha, \vec{c})=0$ unless 
 \(
 |\alpha|= \hl,
\)
where $\hl:= \kappa \frac{\hQ}{|A\vec{t}_0|}.$  Suppose that $|\alpha|\neq \hl$. Then $\left| \frac{\vec{c}}{g}\right| \neq \left| \frac{2\alpha}{t^Q}A\vec{t}_0\right|,$ which implies 
\[
\max_{1\leq i\leq d} |\frac{\alpha \eta_i }{rt^{Q}} \kappa_i |= \left| \frac{\vec{c}}{rg}+ \frac{2\alpha}{rt^Q}A\vec{t}_0\right| =  \max \left( \frac{\kappa}{|r|}, \frac{|\alpha A\vec{t}_0|}{\hQ|r|}  \right) \geq  \max\left(\frac{\eta}{\hR}, \frac{|\alpha| \hR}{\hQ|r|}  \right),
\]
where we use 
\[
| A\vec{t}_0|\geq \frac{|f|^{1/2}}{|g| |A^{-1}|} \geq \hR \frac{\hat{\alpha_0}}{|A^{-1}|} \geq \hR.
\]
The last inequality follows from $\alpha_0:=2\omega+2\max{\deg(\eta_i)}+3|\min\deg(\eta_i)|+\omega'$ (see equation~\eqref{alpha0}) and $|A^{-1}|=\frac{1}{\min|\eta_i|}.$ Therefore, there exists an index $i$ such that 
\(
\max(\hR , \frac{|r|\hQ}{\hR|\alpha \eta_i|})\leq    |\kappa_i| 
\)
which implies $i\in NCR$ and, by~\eqref{oceq1} and \eqref{osceq2}, $I_{g,r}(\alpha, \vec{c})=0.$
\\
\\
Suppose that $I_{g,r}(\alpha, \vec{c})\neq 0.$ Then  $|\alpha|= \hl.$  By equations \eqref{oceq1} and~\eqref{osceq2}, we have
\begin{equation}\label{oscformula}
\begin{split}
I_{g,r}(\alpha,\vec{c})=\hR^d\prod_{i=1}^d \left(
  \delta_{\hR \leq \kappa_i < \frac{|r|\hQ}{\hR|\alpha \eta_i|}} \psi \left( \frac{1}{r} \left( \frac{c_i^{\prime}{y_i}_0}{g} +\frac{\alpha  \eta_i {y_i}_0^2}{t^Q}   \right)\right)  +\delta_{\kappa_i<\hR}  \psi (-\frac{t^Q{c_i^{\prime}}^2}{4rg^2\eta_i \alpha})\cG\left(\frac{\alpha \eta_i t^{2R}}{rt^Q}\right)\right).
\end{split}
\end{equation}
The contribution of $NCR$ is non-zero  only if
\(
\hR \leq \frac{|r|\hQ}{\hR|\alpha \eta_i|}
\) for some $i$. This implies 
\[
|\alpha|\leq  \frac{|r|}{\hR} \left(\frac{\hQ}{\hR|\eta_i|}  \right) \ll   \frac{|r|}{\hR}.
\]
By comparing the preceding inequality with $ |\alpha|=\hl \gg \kappa $, we have 
\(
\kappa\ll \frac{|r|}{\hR}.
\)  
By choosing $\eta$ large enough, this contradicts with our assumption $\kappa \geq \eta  \frac{|r|}{\hR}. $ Therefore, for large enough $\eta$
 \begin{equation*}
\begin{split}
I_{g,r}(\vec{c})&=\frac{\hQ\hR^{d}}{|r|} \int _{ |\alpha|=\hl  }  \psi(\frac{-\alpha k}{rgt^Q})  \prod_{\kappa_i<\hR} \psi (-\frac{{t^Qc_i^{\prime}}^2}{4rg^2\eta_i \alpha}) 
\cG\left(\frac{\alpha \eta_i t^{2R}}{rt^Q}\right)
d\alpha.
\end{split}
\end{equation*}
By~\eqref{epsfact},  we have 
\[
 \prod_{i}\cG\left(\frac{\alpha \eta_i t^{2R}}{rt^Q}\right) 
  =\pm\varepsilon_{\alpha}^{v} \prod_{i}\min\left(1,\left(\frac{\hl \hR^2|\eta_i|}{|r|\hQ}\right)^{-1/2}\right), 
\]
where $v=0 ,1$ depending on parity of the degrees of $\eta_i$ and $\alpha$ and quadratic residue of their top coefficients.   Hence,
\[
I_{g,r}(\vec{c})=\pm\frac{\hQ\hR^{d}}{|r|} \prod_{i}\min\left(1,\left(\frac{\hl \hR^2|\eta_i|}{|r|\hQ}\right)^{-1/2}\right) \int _{ |\alpha| =\hat{l}  }  \psi(\frac{-\alpha k}{rgt^Q})   \psi (-\frac{t^QF^*(\vec{c})}{4rg^2 \alpha}) \varepsilon_{\alpha}^v d\alpha,
\]

where $F^*(\vec{c})=\sum_{i} \frac{{c_i^{\prime}}^2}{\eta_i}.$  By substituting $\beta=\frac{-\alpha k}{rgt^Q},$ definitions~\eqref{Binf}, and
  Lemma~\ref{Bessel}, we have 
\begin{equation*}
\begin{split}
\left| \int _{ |\alpha| =\hat{l}  }  \psi(\frac{-\alpha k}{rgt^Q})   \psi (-\frac{t^QF^*(\vec{c})}{4rg^2 \alpha}) \varepsilon_{\alpha}^v  d\alpha \right|&
= |\frac{rgt^Q}{ k}|\left| \int _{ |\beta| =\hat{l}|\frac{k}{rgt^Q}|  }  \psi(\beta+\frac{kF^*(\vec{c})}{4r^2g^3}\frac{1}{\beta}) \varepsilon_{\beta}^v  d \beta \right|
\\&=\begin{cases} |\frac{rgt^Q}{ k}| \left|B_{\infty}(\psi,l+\deg(\frac{ k}{rgt^Q}),\frac{kF^*(\vec{c})}{4r^2g^3}) \right| &\text{ for } v=0,
\\
\\
 |\frac{rgt^Q}{ k}| \left|\tilde{B}_{\infty}(\psi,l+\deg(\frac{ k}{rgt^Q}),\frac{kF^*(\vec{c})}{4r^2g^3}) \right|&\text{ for } v=1,
\end{cases}
\end{split}
\end{equation*}
By definition, $\hl=  \kappa \frac{\hQ}{|A\vec{t}_0|},$ and, 
from the assumptions of Proposition~\ref{c},  $\kappa\geq  \eta \frac{|r|}{\hR}$ and $\eta>\hat{\omega'^*}.$ Hence,
\[
l+\deg(\frac{ k}{rgt^Q})> \deg{k}+\omega'^*-\left(R+\deg(A\vec{t}_0)+\deg g    \right).
\]
By Lemma~\ref{t0}, $\vec{t}_0\in \Omega$ implying that $A\vec{t}_0 \in \Omega^*,$ by definition. Since $|F^*(A\vec{t}_0)|=|f/g^2|$, by property~\eqref{prop3} of the anisotropic cone $\Omega^*$
\[
|A\vec{t}_0|\leq \hat{\omega'^*}^{1/2} |f|^{1/2}/|g|.
\]
Furthermore, $\deg k= \deg f -\deg g $ and $R= \lceil{\deg(f)/2} -deg(g)-\alpha_0/2\rceil.$ Therefore,
\[
l+\deg(\frac{ k}{rgt^Q})>\deg{k}+\omega'^*-\left(R+\deg(A\vec{t}_0)+\deg g    \right)> 0.
\]
The above and Lemma~\ref{Bessel} implies that 

\begin{equation*}
\begin{split}
\left| \int _{ |\alpha| =\hat{l}  }  \psi(\frac{-\alpha k}{rgt^Q})   \psi (-\frac{t^QF^*(\vec{c})}{4rg^2 \alpha}) \varepsilon_{\alpha}^v  d\alpha \right|=\begin{cases} 
 |\frac{rgt^Q}{ k}|\left| \Kl_{\infty}(\psi, \frac{kF^*(\vec{c})}{4r^2g^3})\right| &\text{  if } 2l=\deg(\frac{t^{2Q}F^*(\vec{c})}{kg}), \text{ and } v=0
 \\
 \\
 |\frac{rgt^Q}{ k}| \left|\Sa_{\infty}(\psi, \frac{kF^*(\vec{c})}{4r^2g^3})\right| &\text{  if } 2l=\deg(\frac{t^{2Q}F^*(\vec{c})}{kg}), \text{ and } v=1
 \\
 \\
 0 &\text{ otherwise.}
 \end{cases} 
\end{split}
\end{equation*}
Therefore, by using the Weil bound on Kloosterman sums or Sali\'e sums,  we have

\[
|I_{g,r}(\vec{c})| \ll \frac{\hQ\hR^{d}}{|r|}\left(\frac{ |F^*(\vec{c})|^{1/2} \hR^2}{|f|^{1/2}|r|}\right)^{-d/2} \left|\frac{rgt^Q}{ k}\right|   \left|\frac{fF^*(\vec{c})}{r^2g^4}\right|^{1/4}\ll  \hQ^d\Big(\frac{|F^*(\vec{c})|^{1/2}\hQ}{|gr|}   \Big)^{-\frac{d-1}{2}},
\]
where we used $|f|^{1/2}\gg \hQ|g|$ and equation~\eqref{defk} defining $k$. Since $|\vec{c}|\ll |F^*(\vec{c})|^{1/2} $ for $\vec{c}\in \Omega^*$, this concludes Proposition~\ref{c}.

\end{proof}

\section{Main contribution to counting function}\label{mainboundsection}
In this section, we study the main contribution to the counting function $N(w,\boldsymbol{\lambda})$. We first begin by estimating the contribution in $N(w,\boldsymbol{\lambda})$ from the terms where $\vec{c}=0$. In order to do so, we first prove the following lemma which gives an estimate on the the norm of $I_{g,r}(\vec{0})$ for $|r|\leq\hQ^{1-\varepsilon}$. We then show that the contribution from $\hQ^{1-\varepsilon}\leq |r|\leq\hQ$ and $\vec{c}=0$ is small. Finally, we show that contribution from $\vec{c}=0$ can be written in terms of local densities.
\begin{proposition}\label{lem:Izerobound}Recall the notation from before. Suppose $\varepsilon>0$ and $1\leq |r|\leq\hQ^{1-\varepsilon}$. There is a constant $0<\sigma_{\infty}\leq 1$ depending only on $f,F,\Omega$ such that
\[I_{g,r}(\vec{0})=\sigma_{\infty}\hQ^d\]
for large enough $Q$ depending only on $\varepsilon,F,\Omega$.
\end{proposition}
\begin{proof}
It follows from equation~\eqref{intt} that
\[I_{g,r}(\vec{0})=\frac{\hQ}{|r|} \int_{\substack{|\vec{t}-\vec{t}_0|< \hat{R}\\ |G(\vec{t})|< \hQ|r|}} d\vec{t}=\frac{\hQ}{|r|} \int_{\substack{|g\vec{t}+\boldsymbol{\lambda}-\vec{x}_0| \leq |t^{-\alpha_0}f|^{1/2}\\ |F(g\vec{t}+\boldsymbol{\lambda})-f|<\hQ|r||g|^2}} d\vec{t}.\]
Making the substitution $\vec{x}=g\vec{t}+\boldsymbol{\lambda}$ gives us the equality
\[I_{g,r}(\vec{0})=\frac{\hQ}{|r||g|^d}\int_{|\vec{x}-\vec{x}_0|\leq |t^{-\alpha_0}f|^{1/2}:|F(\vec{x})-f|< \hQ|r||g|^2}d\vec{x}.\]
Let $f=\alpha_fu^2$, where $\alpha_f\in\{1,\nu,t,\nu t\}$ is the quadratic residue of $f$. Let $D:=E+\deg u, E:=\lceil\frac{1}{2}(-\alpha_0+\deg \alpha_f+1)\rceil$, and write $\tilde{\vec{x}}_0:=(t^{-E}/u)\vec{x}_0$. By Lemma~\ref{lem:orthog} and Fubini, we may rewrite this as
\begin{eqnarray*}I_{g,r}(\vec{0})&=&\frac{\hQ}{|r||g|^d}\int_{|\vec{x}-\vec{x}_0|\leq |t^{-\alpha_0}f|^{1/2}}\int_{\TT}\psi\left(\frac{F(\vec{x})-f}{rg^2t^{Q}}\alpha\right)d\alpha d\vec{x}\\ &=&\frac{\hQ}{|r||g|^d}\int_{\TT}\int_{|\vec{x}|< \hat{D}}\psi\left(\frac{F(\vec{x}+\vec{x}_0)-f}{rg^2t^Q}\alpha\right)d\vec{x}d\alpha\\ &=& \frac{\hQ\hat{D}^d}{|r||g|^d}\int_{\TT}\int_{\TT^d}\psi\left(\frac{F(\vec{x}+\tilde{\vec{x}}_0)-F(\tilde{\vec{x}}_0)}{rg^2t^{Q-2E}/u^2}\alpha\right)d\vec{x}d\alpha
\end{eqnarray*}
where the last equality follows from a change of variables of $\vec{x}$ coordinates by a factor of $t^Eu$. Note that $|t^Eu|=\hat{D}$ and $F(\tilde{\vec{x}}_0)=\frac{\alpha_f}{t^{2E}}$. Making the substitution $\beta=\frac{\alpha}{rg^2t^{Q-2E}/u^2}$, we obtain the equality
\begin{align}\label{integralIzero}I_{g,r}(\vec{0})&=\frac{\hQ^2\hat{D}^{d-2}}{|g|^{d-2}}\int_{|\beta|<\frac{\hat{D}^2}{\hQ|r||g|^2}}\int_{\TT^d}\psi\left((F(\vec{x}+\tilde{\vec{x}}_0)-F(\tilde{\vec{x}}_0))\beta\right)d\vec{x}d\beta\nonumber\\&=
q^{C(f,F,\Omega)}\hQ^d\int_{|\beta|<\frac{\hat{D}^2}{\hQ|r||g|^2}}\int_{\TT^d}\psi\left((F(\vec{x}+\tilde{\vec{x}}_0)-F(\tilde{\vec{x}}_0))\beta\right)d\vec{x}d\beta,\end{align}
where $C(f,F,\Omega):=E-\lceil\frac{1}{2}\deg\alpha_f\rceil-\max\deg\eta_i-\omega'$ is bounded in terms of $F,\Omega$; note that $\deg\alpha_f\in\{0,1\}$. Using Lemma~\ref{lem:orthog}, double integral is equal to
\begin{equation}\label{doubleint}\frac{\hat{D}^2}{\hQ|r||g|^2}\vol\left(\left\{\vec{x}\in\TT^d:|F(\vec{x}+\tilde{\vec{x}}_0)-F(\tilde{\vec{x}}_0)|< \frac{\hQ|r||g|^2}{\hat{D}^2}\right\}\right)\geq 0.
\end{equation}
Let $L$ be such that $\hat{L}=\frac{\hat{D}^2}{\hQ|r||g|^2}$, and write $\Theta(\vec{x}):=F(\vec{x}+\tilde{\vec{x}}_0)-F(\tilde{\vec{x}}_0)$. ~\eqref{doubleint} then becomes
\[\hat{L}\vol\left(\left\{\vec{x}\in\TT^d:|\Theta(\vec{x})|< \hat{L}^{-1}\right\}\right).\]
We show that the above quantity does not depend on $L\geq 0$. 
\\
\\
Suppose $L_1\geq 0$, and let 
\[
\delta({L_1}):=\left\{\vec{x}\in\TT^d:|\Theta(\vec{x})|< \hat{L}_1^{-1}\right\}.
\]
 Let $L_2:=L_1+\deg(A(\tilde{\vec{x}}_0)).$  Next, we show that
  \[\delta({L_1})=\bigsqcup_{i=1}^{M(L_1)} (\vec{x}_i(L_1)+t^{-L_2}\TT^d),
  \]
for finitely many $\vec{x}_i\in \TT^d$.
Suppose that $\vec{x}_1,\vec{x}_2\in\TT^d$ and $|\vec{x}_1-\vec{x}_2|< \hat{L}_2^{-1}$. We write 
\[
\vec{x}_1-\vec{x}_2= \overline{\vec{x}_1-\vec{x}_2}t^{-L_2-1}+O(t^{-L_2-2}),
\]
where  $\overline{\vec{x}_1-\vec{x}_2}\in \mathbb{F}_q^d,$ and  $O(t^{-L_2-2})$ means a vector with degree at most $-L_2-2.$  
We have
\begin{eqnarray*}
\Theta(\vec{x}_1)-\Theta(\vec{x}_2)=F(\vec{x}_1-\vec{x}_2)+2(\vec{x}_1-\vec{x}_2)^{\intercal}A(\vec{x}_2+\tilde{\vec{x}}_0).
\end{eqnarray*}
Using property~\eqref{prop3} of anisotropic cones,
\[\left|\frac{\alpha_f}{t^{2E}}\right|=|F(\tilde{\vec{x}}_0)|\leq |\tilde{x}_0||A\tilde{\vec{x}}_0|\leq|A\tilde{\vec{x}}_0|\hat{\omega'}^{1/2}|F(\tilde{\vec{x}}_0)|^{1/2}=|A\tilde{\vec{x}}_0|\hat{\omega'}^{1/2}\left|\frac{\alpha_f}{t^{2E}}\right|^{1/2},\]
from which it follows that
\[|A\tilde{\vec{x}}_0|\geq \frac{1}{\hat{\omega'}^{1/2}}\left|\frac{\alpha_f}{t^{2E}}\right|^{1/2}.\]
Recall that $E=\lceil\frac{1}{2}(-\alpha_0+\deg \alpha_f+1)\rceil$ and $\alpha_0=2\omega+2\max{\deg(\eta_i)}+3|\min\deg(\eta_i)|+\omega'$. Therefore,
\[|A\tilde{\vec{x}}_0|\geq q^{\omega+\max{\deg(\eta_i)}+\frac{3}{2}|\min\deg(\eta_i)|}\geq |A|.\]
Since we also have $\vec{x}_2\in\TT^d$, this implies that $|A\vec{x}_2| < |A\tilde{\vec{x}}_0|,$ and so we may write  
\[
A(\vec{x}_2+\tilde{\vec{x}}_0)= t^{\deg(A(\tilde{\vec{x}}_0))}\overline{A(\tilde{\vec{x}}_0)}+ O(t^{\deg(A\tilde{\vec{x}}_0)-1}),
\]
where $\overline{A(\tilde{\vec{x}}_0)}\in \mathbb{F}_q^d$ is non-zero. Since $|A\tilde{\vec{x}}_0| \geq |A|\geq 1$,  
\[
|F(\vec{x}_1-\vec{x}_2)| \leq |A| |\vec{x}_1-\vec{x}_2|^2 \leq |A| q^{-2}\hat{L}_2^{-2} = \frac{|A|}{q^2 |A\tilde{\vec{x}}_0|^2 \hat{L}_1^2} \leq q^{-2}\hat{L}_1^{-1}.
\]Therefore,
\begin{equation}\label{thetaeq}
\Theta(\vec{x}_1)-\Theta(\vec{x}_2)= 2 \overline{(\vec{x}_1-\vec{x}_2)}^{\intercal}\overline{A(\tilde{\vec{x}}_0)} t^{-L_1-1} +O(t^{-L_1-2}).
\end{equation}
The above inequality shows that if $\vec{x}_1\in \delta(L_1)$ then $\vec{x}_1+t^{-L_2}\TT^d \subset \delta(L_1).$ Since our norm is non-archimedean, the cosets are either identical or disjoint. Hence 
  \[\delta({L_1})=\bigsqcup_{i=1}^{M(L_1)} (\vec{x}_i(L_1)+t^{-L_2}\TT^d), \]
and 
\begin{equation}\label{volid}
\vol (\delta(L_1))= M(L_1)\hat{L}_2^{-d}.
\end{equation}
We write $\Theta(\vec{x}_i(L_1))=\overline{\Theta(\vec{x}_i(L_1))}t^{-L_1-1}+O(t^{-L_1-2}),$ where $\overline{\Theta(\vec{x}_i(L_1))} \in \mathbb{F}_q.$ Note that $\delta(L_1+1)\subset \delta(L_1).$
It follows from \eqref{thetaeq} that
\[
\delta(L_1+1)=\bigsqcup_{i=1}^{M(L_1)} \bigsqcup_{j=1}^{N_i}  (\vec{x}_i(L_1) +\boldsymbol{\xi}_{ij}t^{-L_2-1}+t^{-L_2-1}\TT^d), 
\]
where $N_i$ is the number of solutions $\boldsymbol{\xi}_{ij}\in \mathbb{F}_q^d$  to $2\boldsymbol{\xi}_{ij}^{\intercal}\overline{A(\tilde{\vec{x}}_0)}=\overline{\Theta(\vec{x}_i(L_1))}.$ Since $\overline{A(\tilde{\vec{x}}_0)}\neq 0$, $N_i=q^{d-1}.$
Therefore,
\[
M(L_1+1)=M(L_1)q^{d-1},
\]
and so by~\eqref{volid},
\[
\hat{L_1+1}\vol(\delta({L_1+1}))=\hat{L}_1\vol(L_1).
\]
This shows that $\hat{L}_1\vol(\delta(L_1))$ is independent of $L_1\geq 0$. Since $\Theta(\vec{0})=0$, $M(L_1)>0$. Therefore, $\hat{L}\vol(\delta(L))=\vol(\delta(0))>0$ for every $L\geq 0$. Recall that in our case, $\hat{L}=\frac{\hat{D}^2}{\hQ|r||g|^2}$. Therefore,~\eqref{doubleint} stabilizes once $\hat{L}\geq 1$. By our choice of $Q=\lceil{\deg(f)/2}-\deg(g)\rceil +\max_{i}(\deg(\eta_i))+\omega^{\prime}$ and $1\leq |r|\leq\hQ^{1-\varepsilon}$, we have
\[\hat{L}=\frac{\hat{D}^2}{\hQ|r||g|^2}\gg_{F,\Omega}\hQ^{\varepsilon}\]
which is at least $1$ for large enough $Q$ depending on $\varepsilon,F,\Omega$.
The proposition follows from~\eqref{integralIzero} and the above by setting $\sigma_{\infty}:=q^{C(f,F,\Omega)}\vol(\delta(0))$.
\end{proof}
We now show that when $\hQ^{1-\varepsilon}\leq |r|\leq\hQ$, then the contribution of the terms in $N(w,\boldsymbol{\lambda})$ when $\vec{c}=\vec{0}$ and corresponding to such $r$ is small. This follows from the following more general statement for all $\vec{c}$.
\begin{lemma}\label{lem:small}For every $\varepsilon>0$,
\[\sum_{\hQ^{1-\varepsilon}\leq |r|\leq\hQ}|gr|^{-d}|S_{g,r}(\vec{c})||I_{g,r}(\vec{c})|\ll_{\varepsilon,F,\Omega}|g|^{\varepsilon}\hQ^{\frac{d+3}{2}+2\varepsilon}.\]
\end{lemma}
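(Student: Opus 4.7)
The plan is to combine a trivial volume bound on $|I_{g,r}(\vec{c})|$ with Proposition~\ref{prop:Supperbound}, exploiting the lower bound $|r|\geq\hQ^{1-\varepsilon}$ to extract the needed savings. From the defining formula \eqref{intt}, the integrand is bounded pointwise by $\hQ/|r|$ and its support is contained in the box $|\vec{t}-\vec{t}_0|<\hR$ of volume $\hR^d$, giving the trivial estimate
\[
|I_{g,r}(\vec{c})|\;\leq\;\frac{\hQ\,\hR^d}{|r|}.
\]
From the definitions $Q=\lceil\deg f/2-\deg g\rceil+\max_i\deg\eta_i+\omega'$ and $R=\lceil\deg f/2-\deg g-\alpha_0/2\rceil$ in Section~\ref{osil}, we have $Q-R=O_{F,\Omega}(1)$, so $\hR\asymp_{F,\Omega}\hQ$ and hence $|I_{g,r}(\vec{c})|\ll_{F,\Omega}\hQ^{d+1}/|r|$.

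Substituting this bound into the sum of interest gives
\[
\sum_{\hQ^{1-\varepsilon}\leq|r|\leq\hQ}|gr|^{-d}|S_{g,r}(\vec{c})||I_{g,r}(\vec{c})|\;\ll\;\hQ^{d+1}\sum_{\hQ^{1-\varepsilon}\leq|r|\leq\hQ}|g|^{-d}|r|^{-d-1}|S_{g,r}(\vec{c})|.
\]
Next I would split $|r|^{-d-1}=|r|^{-(d+1)/2}\cdot|r|^{-(d+1)/2}$ and, using the lower bound $|r|\geq\hQ^{1-\varepsilon}$ on one of the two factors, pull out $|r|^{-(d+1)/2}\leq\hQ^{-(1-\varepsilon)(d+1)/2}$. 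This reduces the inner sum to exactly the shape treated by Proposition~\ref{prop:Supperbound} with $\hat{X}=\hQ$, bounding it by $|g|^{\varepsilon}\hQ^{1+\varepsilon}$.

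Collecting exponents of $\hQ$ yields
\[
\hQ^{d+1}\cdot\hQ^{-(1-\varepsilon)(d+1)/2}\cdot\hQ^{1+\varepsilon}\;=\;\hQ^{(d+3)/2+\varepsilon(d+3)/2},
\]
and absorbing the $\varepsilon(d+3)/2$ into a new $\varepsilon$ (after rescaling) gives the claimed bound $|g|^{\varepsilon}\hQ^{(d+3)/2+\varepsilon}$. There is no substantive obstacle in this argument; it is pure bookkeeping, and the only point to be checked carefully is the comparison $\hR\asymp_{F,\Omega}\hQ$, which is immediate from the definitions of $Q$ and $R$. The argument uses the trivial bound on $I_{g,r}(\vec{c})$ rather than any finer stationary-phase information, because the restriction $|r|\geq\hQ^{1-\varepsilon}$ already forces the summation range to be short enough that the $S$-bound from Proposition~\ref{prop:Supperbound} alone carries the estimate.
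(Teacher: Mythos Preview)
Your proof is correct and follows essentially the same route as the paper: bound $|I_{g,r}(\vec{c})|$ trivially by $\hQ\hR^d/|r|\ll_{F,\Omega}\hQ^{d+1}/|r|$, use $|r|\geq\hQ^{1-\varepsilon}$ to extract the saving, and feed the remaining sum into Proposition~\ref{prop:Supperbound}. The only cosmetic difference is that the paper absorbs the $1/|r|$ factor into the bound $|I_{g,r}(\vec{c})|\ll\hQ^{d+\varepsilon}$ first and then splits $|r|^{-d}=|r|^{-(d-1)/2}|r|^{-(d+1)/2}$ dyadically, whereas you keep the $1/|r|$ and split $|r|^{-d-1}$; the arithmetic is the same.
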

\begin{proof}
Suppose $\hQ^{1-\varepsilon}\leq |r|\leq\hQ$. Using equation~\eqref{intt} and the normalization $\int_{\TT}d\alpha=1$, we obtain
\[|I_{g,r}(\vec{c})|= \frac{\hQ}{|r|}\left|\int_{\substack{|\vec{t}-\vec{t}_0| < \hR \\ |G(\vec{t})|< \hQ|r|}} \psi\left(\frac{\left<\vec{c},\vec{t}\right>}{g}\right)d\vec{t}\right|\leq \frac{\hQ}{|r|}\vol(\vec{t}\in\Ki^d:|\vec{t}-\vec{t}_0|<\hR)=\frac{\hQ\hR^d}{|r|}.\]
By the definitions of $R,Q$ in~\eqref{R},\eqref{defQ}, respectively, we have $\hR\ll_{F,\Omega}\hQ$. Using $|r|\geq\hQ^{1-\varepsilon}$, we obtain
\[|I_{g,r}(\vec{c})|\ll_{F,\Omega}\hQ^{d+\varepsilon}.\]
Using this, we obtain
\begin{eqnarray*}\sum_{\hQ^{1-\varepsilon}\leq |r|\leq\hQ}|gr|^{-d}|S_{g,r}(\vec{c})||I_{g,r}(\vec{c})|&=& \sum_{\hQ^{1-\varepsilon}\leq |r|\leq\hQ}|r|^{-\frac{d-1}{2}}|g|^{-d}|r|^{-\frac{d+1}{2}}|S_{g,r}(\vec{c})||I_{g,r}(\vec{c})|\\ &\ll_{F,\Omega}& \hQ^{d+\varepsilon}\sum_{(1-\varepsilon)Q<k\leq Q}\left(q^k\right)^{-\frac{d-1}{2}}\sum_{|r|=q^k}|g|^{-d}|r|^{-\frac{d+1}{2}}|S_{g,r}(\vec{c})|
\end{eqnarray*}
By Proposition~\ref{prop:Supperbound},
\[\sum_{|r|=q^k}|g|^{-d}|r|^{-\frac{d+1}{2}}|S_{g,r}(\vec{c})|\ll_{\varepsilon,F} |g|^{\varepsilon}\left(q^k\right)^{1+\varepsilon}.\]
Therefore,
\begin{align*}
\hQ^{d+\varepsilon}\sum_{(1-\varepsilon)Q<k\leq Q}\left(q^k\right)^{-\frac{d-1}{2}}\sum_{|r|=q^k}|g|^{-d}|r|^{-\frac{d+1}{2}}|S_{g,r}(\vec{0})|&\ll_{\varepsilon,F,\Omega} |g|^{\varepsilon}\hQ^{d+\varepsilon}\sum_{(1-\varepsilon)Q<k\leq Q}\left(q^k\right)^{-\frac{d-3}{2}+\varepsilon}\\&\ll_{\varepsilon,F,\Omega} |g|^{\varepsilon}\hQ^{\frac{d+3}{2}+2\varepsilon},
\end{align*}
from which the conclusion follows.
\end{proof}
In the following lemma, we bound the tail of the series  $\sum_r |r|^{-d}S_{g,r}(\vec{0})$ for $d\geq 4$. 
\begin{lemma}\label{lem:estimate}For $d\geq 4$ and  $\varepsilon>0$, we have
\[ \sum_{r}|r|^{-d}S_{g,r}(\vec{0}) = \sum_{r:1\leq |r|\leq \hat{T}}|r|^{-d}S_{g,r}(\vec{0})+O_{\varepsilon,F}(|g|^{d+\varepsilon}\hat{T}^{3/2-\frac{d}{2}+2\varepsilon}).\]
\end{lemma}
\begin{proof}
Write 
\[\sum_{r}|r|^{-d}S_{g,r}(\vec{0})=\sum_{r:1\leq |r|\leq \hat{T}}|r|^{-d}S_{g,r}(\vec{0})+\sum_{|r|>\hat{T}}|r|^{-d}S_{g,r}(\vec{0}).\]
The triangle inequality gives us
\[
\left|\sum_{|r|>\hat{T}}|r|^{-d}S_{g,r}(\vec{0})\right|\leq \sum_{\hat{N}=\hat{T}}^{\infty}\hat{N}^{-d}\sum_{|r|=\hat{N}}|S_{g,r}(\vec{0})|.
\]
From $S_{g,r}(\vec{0})=S_1S_2$ and Proposition~\ref{lem:S1bound}, we have
\[|S_{g,r}(\vec{0})|\ll_{\Delta} |g|^d\tau(r_1)|r|^{d/2}|r_1|^{1/2}|r_2||\gcd(r_1,f)|^{1/2},\]
using which we obtain
\begin{eqnarray*}\sum_{N=T}^{\infty}\hat{N}^{-d}\sum_{|r|=\hat{N}}|S_{g,r}(\vec{0})|&\ll_{\Delta}& |g|^d\sum_{N=T}^{\infty}\hat{N}^{-d/2}\sum_{|r|=\hat{N}} \tau(r_1)|r_1|^{1/2}|r_2||\gcd(r_1,f)|^{1/2}\\ &\ll_{\Delta} & |g|^{d+\varepsilon}\sum_{N=T}^{\infty}\hat{N}^{1-\frac{d}{2}}\sum_{|r_1|\leq\hat{N}} \tau(r_1)|r_1|^{-1/2}|\gcd(r_1,f)|^{1/2}\\ &\ll_{\Delta} & |g|^{d+\varepsilon}\sum_{N=T}^{\infty}\hat{N}^{3/2-\frac{d}{2}+\varepsilon}\sum_{|r_1|\leq\hat{N}}\frac{|\gcd(r_1,f)|^{1/2}}{|r_1|}\\ &\ll_{\Delta}& |g|^{d+\varepsilon}\sum_{N=T}^{\infty}\hat{N}^{3/2-\frac{d}{2}+2\varepsilon}=O_{\varepsilon,F}(|g|^{d+\varepsilon}\hat{T}^{3/2-\frac{d}{2}+2\varepsilon}),
\end{eqnarray*}
where we use $|r_2|=\frac{\hat{N}}{|r_1|}$ and that prime factors of $r_2$ are those dividing $\Delta g$ in the second inequality.  This implies that given $r_1$, we only have  $O_{\Delta}(|g|^{\varepsilon})$ possibilities for $r_2.$
The convergence of the last series follows  from $d\geq 4$.
 Using this, we obtain that
\[\sum_{1\leq |r|\leq \hat{T}}|r|^{-d}S_{g,r}(\vec{0})=\sum_{r}|r|^{-d}S_{g,r}(\vec{0})+O_{\varepsilon,F}(|g|^{d+\varepsilon}\hat{T}^{3/2-\frac{d}{2}+2\varepsilon}).\]
The conclusion follows.
\end{proof}
We now want to show that the infinite sum
\[\sum_{r}|r|^{-d}S_{g,r}(\vec{0})\]
can be entirely written in terms of number theoretic information.
\begin{lemma}\label{lem:densitysum} Suppose that all  conditions in Theorem~\ref{strong} are  satisfied. Then
\[\sum_{r}|gr|^{-d}S_{g,r}(\vec{0})=\prod_{\varpi}\sigma_{\varpi},\]
where $\varpi$ ranges over the monic irreducible polynomials in $\mathcal{O}$, and
\[\sigma_{\varpi}:=\lim_{k\rightarrow\infty}\frac{|\left\{\vec{x}\bmod \varpi^{k+\ord_\varpi(g)}:F(\vec{x})\equiv f\bmod\varpi^{k+\ord_\varpi(g)},\ \vec{x}\equiv\boldsymbol{\lambda}\bmod\varpi^{\ord_{\varpi}(g)}\right\}|}{|\varpi|^{(d-1)k}}.\]
Moreover,
\(
\prod_{\varpi}\sigma_{\varpi} \gg_{\varepsilon,F} |f|^{-\varepsilon}
\)
for every $\varepsilon>0.$
\end{lemma}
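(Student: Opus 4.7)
The plan is to (i) factor the left-hand side as an Euler product over monic irreducibles $\varpi$, (ii) identify each local factor with $\sigma_{\varpi}$, and (iii) bound the resulting product from below by $|f|^{-\varepsilon}$. By Lemma~\ref{lem:average} the sum $\sum_r |gr|^{-d}S_{g,r}(\vec{0})$ converges absolutely for $d\geq 4$, so rearrangement is justified. Using the decomposition $r=r_1r_2$ with $(r_1,g\Delta)=1$ and $r_2$ supported on primes of $g\Delta$, combined with the multiplicativity of $S_1$ in Lemma~\ref{lem:multiplicative} and an analogous splitting of $S_2$ across distinct prime divisors of $g\Delta$, the sum factors as
\[\sum_r |gr|^{-d}S_{g,r}(\vec{0})=\prod_\varpi T_\varpi,\]
where $T_\varpi$ denotes the local sum at $\varpi$.

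To identify $T_\varpi=\sigma_\varpi$, I would reverse the circle method locally at $\varpi$ by applying orthogonality of additive characters modulo $\varpi^{N+\nu_\varpi(g)}$ to rewrite
\[N_{\varpi^{N+\nu_\varpi(g)}}:=\#\{\vec{x}\bmod \varpi^{N+\nu_\varpi(g)}:F(\vec{x})\equiv f,\ \vec{x}\equiv\boldsymbol{\lambda}\bmod\varpi^{\nu_\varpi(g)}\}\]
as a sum involving an inner exponential sum over a multiplier $a$ coprime to $\varpi$, then recognize the partial sum $\sum_{k\leq N}|\varpi|^{-d(k+\nu_\varpi(g))}\widetilde{T}_{\varpi^k}$ truncating the Euler factor as precisely $|\varpi|^{-(d-1)N}N_{\varpi^{N+\nu_\varpi(g)}}$. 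Passing to the limit $N\to\infty$ then gives $T_\varpi=\sigma_\varpi$. The hypothesis $(f\Delta,g)=1$ ensures that the congruence constraint $g\mid 2\boldsymbol{\lambda}^TA\vec{b}-k$ inside $S_{g,r}(\vec{0})$ decouples cleanly from the equation $F(\vec{x})\equiv f$ at primes $\varpi\nmid g$.

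For the lower bound, I partition primes into three classes. For $\varpi\nmid fg\Delta$, Deligne-type estimates on point counts of smooth quadric fibers over $\FF_{|\varpi|}$ yield $\sigma_\varpi=1+O(|\varpi|^{-(d-1)/2})$, and since $d\geq 4$ the subproduct converges absolutely to a positive constant depending only on $F$. For $\varpi\mid g$, the coprimality $(f\Delta,g)=1$ together with the assumed local solvability forces a Hensel lift of $\boldsymbol{\lambda}$ to a smooth point of $F^{-1}(f)$, whence $\sigma_\varpi\geq c_1(F)>0$. For $\varpi\mid f\Delta$ not dividing $g$, local solvability again gives $\sigma_\varpi>0$; elementary Hensel-type lower bounds then yield $\sigma_\varpi\gg|\varpi|^{-c_2(F)}$, and multiplying over the at most $O(\log|f\Delta|)$ such primes contributes a loss of at most $|f|^{-\varepsilon}$. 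The main obstacle will be the careful identification of $T_\varpi$ at primes $\varpi\mid g$: the constraint $g\mid 2\boldsymbol{\lambda}^TA\vec{b}-k$ entangles $g$ with the residue of $\vec{b}$ modulo $\varpi^{\nu_\varpi(g)}$, and verifying that reversing the circle-method exponential sum reproduces exactly the combined local count encoding both $F(\vec{x})\equiv f$ and $\vec{x}\equiv\boldsymbol{\lambda}\bmod \varpi^{\nu_\varpi(g)}$ requires a careful bookkeeping of the $|g|$-factor appearing in $S_{g,r}(\vec{0})$.
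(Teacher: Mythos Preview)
Your overall architecture matches the paper's: factor the series as an Euler product, identify each local factor as the density $\sigma_\varpi$, then bound the product from below. The paper carries out the factorization differently---rather than invoking multiplicativity of $S_{g,r}$ in $r$, it defines $(N)!:=\prod_{|h|\leq\hat N,\ h\text{ monic}}h$, computes the truncated sum $\sum_{r\mid(N)!}|r|^{-d}S_{g,r}(\vec 0)$ directly via orthogonality, and shows it equals $|g|^d$ times a finite product of local counts before letting $N\to\infty$. This sidesteps the need to verify a multiplicative splitting of $S_2$ across primes of $g\Delta$, which you correctly flag as delicate.

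There is, however, a genuine gap in your step (iii). At primes $\varpi\mid f$ with $\varpi\nmid g\Delta$, the bound $\sigma_\varpi\gg|\varpi|^{-c_2(F)}$ is too weak: multiplying over all such primes gives only $\prod_{\varpi\mid f}\sigma_\varpi\gg\mathrm{rad}(f)^{-c_2}$, and for squarefree $f$ this is $|f|^{-c_2}$ with a \emph{fixed} exponent $c_2$, not $|f|^{-\varepsilon}$. Observing that there are $O(\log|f|)$ such primes does not salvage this. The paper instead uses Lemma~\ref{lem:S1bound} (the Weil bound on $S_1$) to obtain $|\varpi|^{-kd}|S_{g,\varpi^k}(\vec 0)|\leq(k+1)|\varpi|^{-k(d-1)/2}|\gcd(\varpi^k,f)|^{1/2}$, whence $\sigma_\varpi=1+O(|\varpi|^{-1})$ when $\varpi\mid f$ and $1+O(|\varpi|^{-2})$ otherwise; this sharper estimate is what makes $\prod_{\varpi\nmid g\Delta}\sigma_\varpi\gg|f|^{-\varepsilon}$ work. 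The same problem afflicts your treatment of $\varpi\mid g$: a bound $\sigma_\varpi\geq c_1(F)$ with $c_1<1$ costs $c_1^{\omega(g)}$, which can be as small as a negative power of $|g|$. The paper shows $\sigma_\varpi=1$ exactly at these primes, since the congruence $\vec x\equiv\boldsymbol\lambda\bmod\varpi^{\nu_\varpi(g)}$ pins down a single smooth point and Hensel then gives precisely $|\varpi|^{(d-1)k}$ lifts.
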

\begin{proof} Let $M\in \mathcal{O}.$ 
Write
\begin{eqnarray*}
&&\sum_{r|M}|r|^{-d}S_{g,r}(\vec{0})\\&=&\sum_{r|M}|r|^{-d}\sum_{\substack{a\bmod gr\\ (a,r)=1}}\sum_{\vec{b}\in\mathcal{O}^d/(gr)}\psi\left(\frac{a(2\boldsymbol{\lambda}^{\intercal}A\vec{b}-k+gF(\vec{b}))}{gr}\right)\\&=&\frac{1}{|M|^d}\sum_{r|M}\sum_{\substack{a\bmod gr\\ (a,r)=1}}\left|\frac{M}{r}\right|^d\sum_{\vec{b}\in\mathcal{O}^d/(gr)}\psi\left(\frac{a(2\boldsymbol{\lambda}^{\intercal}A\vec{b}-k+gF(\vec{b}))}{gr}\right)\\&=&\frac{1}{|M|^d}\sum_{r|M}\sum_{\substack{a\bmod gr\\ (a,r)=1}}\sum_{\vec{b}\in\mathcal{O}^d/(gM)}\psi\left(\frac{a(2\boldsymbol{\lambda}^{\intercal}A\vec{b}-k+gF(\vec{b}))}{gr}\right)\\&=& \frac{1}{|M|^d}\sum_{\vec{b}\in\mathcal{O}^d/(gM)}\sum_{r|M}\sum_{\substack{a\bmod gr\\ (a,r)=1}}\psi\left(\frac{a(2\boldsymbol{\lambda}^{\intercal}A\vec{b}-k+gF(\vec{b}))}{gr}\right).
\end{eqnarray*}
Since
\[\sum_{r|M}\sum_{\substack{a\bmod gr\\ (a,r)=1}}=\sum_{a\bmod gM},\]
\[\sum_{r|M}|r|^{-d}S_{g,r}(\vec{0})=\frac{1}{|M|^d}\sum_{a\bmod gM}\sum_{\vec{b}\in\mathcal{O}^d/(gM)}\psi\left(\frac{a(2\boldsymbol{\lambda}^{\intercal}A\vec{b}-k+gF(\vec{b}))}{gM}\right).\]
By Lemma~\ref{lem:orthogsum}, we obtain 
\begin{equation}\label{sh}
\sum_{r|M}|r|^{-d}S_{g,r}(\vec{0})=|g|\frac{\left|\{\vec{b}\in\mathcal{O}^d/(gM):2\boldsymbol{\lambda}^{\intercal}A\vec{b}-k+gF(\vec{b})\equiv 0\bmod gM\}\right|}{|M|^{d-1}}.
\end{equation}
 Define for each integer $N\geq 0$ the analogue of the factorial
\[(N)!:=\prod_{\substack{|f|\leq\hat{N}\\ f\ monic}}f.\] Let us write $(N)!=\varpi_1^{a_1}\hdots\varpi_{\ell}^{a_{\ell}}$. Then by the Chinese remainder theorem, 
\[2\boldsymbol{\lambda}^{\intercal}A\vec{b}-k+gF(\vec{b})\equiv 0 \bmod g(N)!\]
is equivalent to
\[F(g\vec{b}+\lambda)\equiv f \bmod \varpi_i^{a_i+2\ord_{\varpi_i}(g)}\]
for every $i=1,\hdots,\ell$. Letting $M=(N)!$ in \eqref{sh} and using  the above,
\begin{eqnarray*}&&\sum_{r|(N)!}|r|^{-d}S_{g,r}(\vec{0})\\&=&|g|^d\prod_{\varpi|(N)!}\frac{\left|\{\vec{b}\in\mathcal{O}^d/(\varpi^{\ord_{\varpi}((N)!)+\ord_{\varpi}(g)}):F(\varpi^{\ord_{\varpi}(g)}\vec{b}+\boldsymbol{\lambda})\equiv f\bmod \varpi^{\ord_{\varpi}((N)!)+2\ord_{\varpi}(g)}\}\right|}{|\varpi^{\ord_{\varpi}((N)!)+\ord_{\varpi}(g)}|^{d-1}}\\&=&
|g|^d\prod_{\varpi|(N)!}\frac{\left|\{\vec{x}\in\mathcal{O}^d/(\varpi^{\ord_{\varpi}((N)!)+2\ord_{\varpi}(g)}):F(\vec{x})\equiv f\bmod \varpi^{\ord_{\varpi}((N)!)+2\ord_{\varpi}(g)},\ \vec{x}\equiv \boldsymbol{\lambda}\bmod \varpi^{\ord_{\varpi}(g)}\}\right|}{|\varpi^{\ord_{\varpi}((N)!)+\ord_{\varpi}(g)}|^{d-1}}.
\end{eqnarray*}
 Letting $N\rightarrow\infty$ and using Lemma~\ref{lem:estimate} for absolute convergence gives us
\[\sum_{r}|gr|^{-d}S_{g,r}(\vec{0})=\prod_{\varpi}\sigma_{\varpi},\]
where $\sigma_{\varpi}$ are as in the statement of the lemma. This completes the proof of the first part of the lemma. 

Suppose that $\gcd(\varpi,g\Delta)=1.$ By \eqref{sh}, we have 
\[
\sigma_{\varpi}=|g|^{-d}\sum_{k\geq 0}|\varpi|^{-kd}S_{g,\varpi^k}(\vec{0})=1+\sum_{k\geq 1}|g|^{-d} |\varpi|^{-kd}S_{g,\varpi^k}(\vec{0}).
\]
 By Proposition~\ref{lem:S1bound},
\[
|g|^{-d}|\varpi|^{-kd}S_{g,\varpi^k}(\vec{0})\leq |\varpi|^{-k\frac{d-1}{2}}(k+1)|\gcd(\varpi^k,f)|^{1/2}
\]
Hence,
\[
\sigma_{\varpi}=\begin{cases}1+O(1/|\varpi|)\neq 0,  &\text{ if } \varpi|f,
\\ 
1+O(1/|\varpi|^{3/2})\neq 0,  &\text{ otherwise.} 
\end{cases}
\]
This implies that 
\[
|f|^{\varepsilon}\gg \prod_{\gcd(\varpi,g\Delta)=1}\sigma_{\varpi} \gg  |f|^{-\varepsilon}.
\]
Suppose that $\varpi|g$ and  $\gcd(\Delta,\varpi)=1.$ Since we also have $\boldsymbol{\lambda}\neq 0 \bmod \varpi$, $\nabla F(\boldsymbol{\lambda}) \neq 0 \bmod \varpi.$ Then, 
by Hensel's lemma
\[
\sigma_{\varpi}=1.
\]
Finally, suppose that  $\varpi|\Delta$. Since the local condition at $\varpi$ is satisfied, there exists $\vec{a}$ mod $\varpi$ such that
\[
F(\vec{a})=f \mod \varpi.
\] 
Furthermore, $\gcd(f,\Delta^{\infty})=O(1),$ which implies   $\nabla F(\boldsymbol{\vec{a}}) \neq 0 \bmod \varpi^{O(1)}.$ Then, 
by Hensel's lemma
\[
 1 \gg_{\Delta} \prod_{\varpi|\Delta}\sigma_{\varpi}\gg_{\Delta} 1.
\]
This concludes the proof of our lemma.
\end{proof}
\section{Proof of the main theorem}\label{finalproof}
In this section, we prove Theorem~\ref{mainthm}. Though we obtain a theorem for $d\geq 4$, it is only optimal when $d\geq 5$. As before, we assume the conditions of Theorem~\ref{mainthm}. Recall the following definitions:
\[Q=\lceil{\deg(f)/2} -deg(g)\rceil +\max_{i}(\deg(\eta_i))+\omega^{\prime}\]
and
\[R=\lceil{\deg(f)/2} -deg(g)-\alpha_0/2\rceil,\]
where
\[\alpha_0=2\omega+2\max{\deg(\eta_i)}+3|\min\deg(\eta_i)|+\omega',\]
and $\omega,\omega'$ are the parameters of the anisotropic cone $\Omega$ and $\max|\eta_i|=|A|$, $\min|\eta_i|=|A^{-1}|^{-1}$ are well-defined in terms of the $A$. Recall that $\eta$ is the parameter appearing in Proposition~\ref{c}, a constant depending only on $\Omega,F$. Furthermore, recall that $\kappa=\left|\vec{c}/g\right|$, and that $\vec{c}$ is said to be exceptional if $\kappa<\hQ/\hR$.\\
\\
We first give a bound on the contributions of the nonzero exceptional vectors to our counting function.
\begin{proposition}\label{prop:small}For any non-degenerate quadratic form $F$ over $\mathcal{O}$ in $d\geq 4$ variables, and for any $\varepsilon>0$, we have
\[\sum_{1\leq |r|\leq\hQ}\sum_{\vec{c}\neq 0}^{\exc}|gr|^{-d}|S_{g,r}(\vec{c})||I_{g,r}(\vec{c})|\ll_{\varepsilon,F,\Omega} \hQ^{\frac{d+3}{2}+\varepsilon}|g|^{\frac{d-3}{2}+\varepsilon}(1+|g|^{-\frac{d-5}{2}+\varepsilon}),\]
where $\sum^{\exc}$ denotes summation over exceptional vectors.
\end{proposition}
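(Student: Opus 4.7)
The plan is to assemble the bounds on $S_{g,r}(\vec{c})$ from Proposition~\ref{prop:Supperbound} and on $I_{g,r}(\vec{c})$ from Proposition~\ref{c}, combined with the lattice restriction from Lemma~\ref{lem:cform}. First, apply Lemma~\ref{lem:cform} to restrict the $\vec{c}$-sum to the sublattice $L:=\mathbb{F}_q[t]\cdot A\boldsymbol{\lambda}+g\mathcal{O}^d\subset\mathcal{O}^d$, which has covolume $|g|^{d-1}$; this uses the hypothesis that one coordinate of $\boldsymbol{\lambda}$ is coprime to $g$ together with $(\Delta,g)=1$, which together imply that $A\boldsymbol{\lambda}\bmod g$ generates a free rank-one submodule of $(\mathcal{O}/g)^d$.

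Next, for $\vec{c}\in L\cap\Omega^*\setminus\{0\}$ and $|r|\leq|\vec{c}|\hR/(\eta|g|)$ (the range in which the hypothesis $\kappa\geq\eta|r|/\hR$ of Proposition~\ref{c} holds), Proposition~\ref{c} gives
\[|gr|^{-d}|I_{g,r}(\vec{c})|\ll\hQ^{(d+1)/2}|gr|^{-(d+1)/2}|\vec{c}|^{-(d-1)/2}.\]
Combining this with Proposition~\ref{prop:Supperbound} applied with $\hat X=|\vec{c}|\hR/(\eta|g|)$, and using that $\hR\asymp\hQ$ up to constants depending on $F$ and $\Omega$, yields the per-$\vec{c}$ estimate
\[\sum_{r}|gr|^{-d}|S_{g,r}(\vec{c})||I_{g,r}(\vec{c})|\ll\hQ^{(d+3)/2+\varepsilon}|g|^{(d-3)/2+\varepsilon}|\vec{c}|^{(3-d)/2+\varepsilon}.\]
In the complementary range $|r|>|\vec{c}|\hR/(\eta|g|)$ where Proposition~\ref{c} does not apply, Lemma~\ref{noosc} (together with the estimate $|I_{g,r}(\vec{0})|\ll\hQ^d$ from the proof of Lemma~\ref{lem:Izerobound}) controls $|I_{g,r}(\vec{c})|$; using this with Proposition~\ref{prop:Supperbound} and the Weil-type bound $|r|^{-(d-1)/2}\leq(|\vec{c}|\hR/|g|)^{-(d-1)/2}$ on this range gives a contribution of the same order.

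Finally, sum over $\vec{c}\in L\setminus\{0\}$ with $|\vec{c}|\lesssim|g|$ (from the exceptional condition $\kappa<\hQ/\hR$). Using the lattice-point bound $\#\{\vec{c}\in L:|\vec{c}|\leq\hat{j}\}\ll 1+\hat{j}^d/|g|^{d-1}$ and a dyadic decomposition, one obtains
\[\sum_{\vec{c}\in L\setminus\{0\},|\vec{c}|\lesssim|g|}|\vec{c}|^{(3-d)/2+\varepsilon}\ll 1+|g|^{(5-d)/2+\varepsilon},\]
where the $O(1)$ term comes from the geometrically-decaying tail at small $|\vec{c}|$ (since $(3-d)/2<0$ for $d\geq 4$) and the $|g|^{(5-d)/2+\varepsilon}$ term from the bulk at $|\vec{c}|\sim|g|$. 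Multiplying through recovers the claimed bound $\hQ^{(d+3)/2+\varepsilon}|g|^{(d-3)/2+\varepsilon}(1+|g|^{-(d-5)/2+\varepsilon})$. The main obstacle will be the careful handling of the complementary $r$-range where Proposition~\ref{c} does not directly apply, together with the irregular lattice-point distribution in $L$ when it admits short vectors; as remarked in the introduction, the resulting $(1+|g|^{-(d-5)/2+\varepsilon})$ correction reflects precisely the loss from applying the triangle inequality without exploiting cancellation in the resulting Kloosterman-sum-type sums over $\vec{c}$, and this is why the bound fails to be optimal at $d=4$.
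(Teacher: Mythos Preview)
Your overall architecture matches the paper's proof exactly: split the $r$-sum at $|r|\sim\kappa\hR$ into two pieces $E_1$ and $E_2$, apply Proposition~\ref{c} together with Proposition~\ref{prop:Supperbound} on $E_1$, apply the ``trivial'' bound $|I_{g,r}(\vec{c})|\ll\hQ^{d+\varepsilon}$ together with Proposition~\ref{prop:Supperbound} on $E_2$, and then sum the resulting per-$\vec{c}$ estimate $|\vec{c}|^{(3-d)/2+\varepsilon}$ over the exceptional vectors. The per-$\vec{c}$ bound you state is correct and coincides with the paper's. (A minor quibble: Lemma~\ref{noosc} only covers $|r|>\kappa\hR$, leaving the finite dyadic window $\kappa\hR/\eta<|r|\le\kappa\hR$; the paper simply invokes the trivial bound $|I_{g,r}(\vec{c})|\ll\hQ^{d+\varepsilon}$ on all of $E_2$, which handles this uniformly.)

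The genuine gap is in your lattice-point count. The bound
\[
\#\{\vec{c}\in L:|\vec{c}|\le\hat{j}\}\ll 1+\hat{j}^d/|g|^{d-1}
\]
is \emph{false} in general for $L=\mathbb{F}_q[t]\cdot A\boldsymbol{\lambda}+g\mathcal{O}^d$: this lattice has a distinguished rank-one sublattice $\mathbb{F}_q[t]\cdot A\boldsymbol{\lambda}$, and when $|A\boldsymbol{\lambda}|$ is small (which nothing prevents --- e.g.\ $\boldsymbol{\lambda}=(1,0,\ldots,0)$ with $A$ close to the identity) there are $\asymp\hat{j}$ lattice points of norm $\le\hat{j}$, far exceeding your $O(1)$ claim for $\hat{j}\ll|g|^{(d-1)/d}$. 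You flag ``irregular lattice-point distribution when $L$ admits short vectors'' as an obstacle, but then use the generic covolume bound anyway; that step does not stand.

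The paper's fix is not a lattice-counting argument at all but the direct parametrisation you already set up: since $(\Delta,g)=1$ and some coordinate of $\boldsymbol{\lambda}$ is a unit mod $g$, some coordinate $(A\boldsymbol{\lambda})_j$ is a unit mod $g$, so $c_j\bmod g$ determines $\alpha\bmod g$; combined with $|\vec{c}|\le O_{F,\Omega}(1)|g|$, this gives the correct count $\#\{\vec{c}\in L:0<|\vec{c}|\le\hat{j}\}\ll\hat{j}$ and hence
\[
\sum_{\vec{c}\neq 0}^{\mathrm{exc}}|\vec{c}|^{(3-d)/2+\varepsilon}\ll\sum_{0\neq|\alpha|<|g|}|\alpha|^{(3-d)/2+\varepsilon}\ll 1+|g|^{(5-d)/2+\varepsilon}.
\]
Happily this yields the same final bound you asserted, so once you replace the covolume heuristic by this $O(\hat{j})$ count, your proof is complete and agrees with the paper's.
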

We prove this proposition by rewriting
\[\sum_{1\leq |r|\leq\hQ}\sum_{\vec{c}\neq 0}^{\exc}|gr|^{-d}S_{g,r}(\vec{c})I_{g,r}(\vec{c})=E_1+E_2,\]
where
\[E_1:=\sum_{\vec{c}\neq 0}^{\exc}\sum_{1\leq |r|\leq\frac{\hR |\vec{c}|}{\eta |g|}}|gr|^{-d}S_{g,r}(\vec{c})I_{g,r}(\vec{c})\]
and
\[E_2:=\sum_{\vec{c}\neq 0}^{\exc}\sum_{\frac{\hR |\vec{c}|}{\eta |g|}< |r|\leq\hQ}|gr|^{-d}S_{g,r}(\vec{c})I_{g,r}(\vec{c}),\]
and then showing that $E_1$ and $E_2$ satisfy the above bound. This division of the sum into two parts is suggested by Proposition~\ref{c}.
\begin{lemma}\label{lem:e1}
\[|E_1|\ll_{\varepsilon,F,\Omega} \hQ^{\frac{d+3}{2}+\varepsilon}|g|^{\frac{d-3}{2}+\varepsilon}(1+|g|^{-\frac{d-5}{2}+\varepsilon}).\]
\end{lemma}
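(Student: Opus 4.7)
The plan is to insert Proposition~\ref{c} into $E_1$, apply Proposition~\ref{prop:Supperbound} to the resulting $r$-sum, and then bound the $\vec{c}$-sum via the lattice structure forced by Lemma~\ref{lem:cform}. In the range $1\le|r|\le\hR|\vec{c}|/(\eta|g|)$ the hypothesis $\kappa\ge\eta|r|/\hR$ of Proposition~\ref{c} holds, so $I_{g,r}(\vec{c})=0$ unless $\vec{c}\in\Omega^*$, and in that case
\[|I_{g,r}(\vec{c})|\ll\hQ^d\Big(\tfrac{|\vec{c}|\hQ}{|gr|}\Big)^{-(d-1)/2}=\hQ^{(d+1)/2}|\vec{c}|^{-(d-1)/2}|gr|^{(d-1)/2}.\]
Inserting this into each summand of $E_1$, the factor $|gr|^{-d}\cdot|gr|^{(d-1)/2}=|gr|^{-(d+1)/2}$ decouples the $r$- and $\vec{c}$-dependencies, giving
\[|E_1|\ll\hQ^{(d+1)/2}|g|^{-(d+1)/2}\sum_{\vec{c}\neq 0}^{\exc}|\vec{c}|^{-(d-1)/2}\sum_{1\le|r|\le\hR|\vec{c}|/(\eta|g|)}|r|^{-(d+1)/2}|S_{g,r}(\vec{c})|.\]

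Applying Proposition~\ref{prop:Supperbound} with $\hat{X}=\hR|\vec{c}|/(\eta|g|)$ bounds the inner $r$-sum by $\ll_\varepsilon|g|^{d-1+\varepsilon}\hR^{1+\varepsilon}|\vec{c}|^{1+\varepsilon}$. By Lemma~\ref{lem:cform}, the remaining $\vec{c}$'s lie in the lattice $\mathcal{L}:=\{\vec{c}\in\mathcal{O}^d:\vec{c}\equiv\alpha A\boldsymbol{\lambda}\bmod g \text{ for some }\alpha\in\mathcal{O}\}$. Since $(f\Delta,g)=1$ makes $A$ invertible modulo $g$, and some coordinate of $\boldsymbol{\lambda}$ is coprime to $g$, the same holds for $A\boldsymbol{\lambda}$, giving $[\mathcal{O}^d:\mathcal{L}]=|g|^{d-1}$ and the standard lattice count $\#\{\vec{c}\in\mathcal{L}:|\vec{c}|\le X\}\ll X^d/|g|^{d-1}+1$ for all $X\ge 1$. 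Putting this together,
\[|E_1|\ll\hQ^{(d+1)/2}|g|^{(d-3)/2+\varepsilon}\hR^{1+\varepsilon}\sum_{\substack{\vec{c}\in\mathcal{L}\cap\Omega^*\\ 0<|\vec{c}|\le |g|\hQ/\hR}}|\vec{c}|^{-(d-3)/2+\varepsilon}.\]

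Finally, splitting this $\vec{c}$-sum dyadically, the $X^d/|g|^{d-1}$ piece of the lattice count yields an increasing geometric series (exponent $(d+3)/2+\varepsilon>0$) dominated by $|\vec{c}|\sim|g|\hQ/\hR$, contributing $\ll|g|^{-(d-5)/2+\varepsilon}$ after using that $\hQ/\hR=O_{F,\Omega}(1)$ by construction of $Q$ and $R$; the $+1$ piece contributes at most $\ll|g|^{\varepsilon}$. Combining with $\hR\ll_F\hQ$ produces the claimed bound
\[|E_1|\ll\hQ^{(d+3)/2+\varepsilon}|g|^{(d-3)/2+\varepsilon}\bigl(1+|g|^{-(d-5)/2+\varepsilon}\bigr).\]
The principal obstacle is precisely this lattice count in the large-$|\vec{c}|$ regime: the dyadic sum is pushed to its upper endpoint $|\vec{c}|\sim|g|$, at which point the lattice density $|g|^{1-d}$ cannot fully cancel the shell volume $|\vec{c}|^d\sim|g|^d$, producing the $|g|^{-(d-5)/2+\varepsilon}$ factor that drives the suboptimality at $d=4$ discussed in the introduction.
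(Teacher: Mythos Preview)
Your proof follows the same strategy as the paper's: apply Proposition~\ref{c} to bound $I_{g,r}(\vec{c})$, invoke Proposition~\ref{prop:Supperbound} for the $r$-sum, and then control the remaining sum over exceptional $\vec{c}$ using the congruence constraint from Lemma~\ref{lem:cform}. The algebraic manipulations leading to
\[|E_1|\ll \hQ^{(d+3)/2+\varepsilon}|g|^{(d-3)/2+\varepsilon}\sum_{\vec{c}\neq 0}^{\exc}|\vec{c}|^{-(d-3)/2+\varepsilon}\]
match the paper exactly.

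The one step that needs more care is your ``standard lattice count'' $\#\{\vec{c}\in\mathcal{L}:|\vec{c}|\le X\}\ll X^d/|g|^{d-1}+1$. This bound is \emph{not} valid for a general sublattice of covolume $|g|^{d-1}$, nor for this particular one: if for instance $A\boldsymbol{\lambda}\equiv e_1\bmod g$ then $\mathcal{L}=\mathcal{O} e_1\oplus g\mathcal{O}^{d-1}$, which has $\asymp X$ points of norm $\le X$ for $1\le X<|g|$, not $O(1)$. The paper sidesteps this by observing that since some coordinate $(A\boldsymbol{\lambda})_j$ is a unit modulo $g$, the map $\alpha\mapsto c_j\equiv\alpha(A\boldsymbol{\lambda})_j\bmod g$ is a bijection on residues; combining $|\vec{c}|\ge|c_j|$ with the fact that the assignment $\vec{c}\mapsto\alpha$ is $O_{F,\Omega}(1)$-to-$1$ on the range $|\vec{c}|\ll|g|$, one gets directly
\[\sum_{\vec{c}\neq 0}^{\exc}|\vec{c}|^{-(d-3)/2+\varepsilon}\ll\sum_{0\neq|\alpha|<|g|}|\alpha|^{-(d-3)/2+\varepsilon}\ll 1+|g|^{-(d-5)/2+\varepsilon}.\]
Your dyadic argument in fact yields the same final bound even with the correct count (the worst case $\mu_1=O(1)$, $\mu_2=\cdots=\mu_d=|g|$ still gives $\sum_X X\cdot X^{-(d-3)/2}\ll 1+|g|^{-(d-5)/2+\varepsilon}$), so the conclusion survives; but the intermediate lattice-count claim as stated is false and should be replaced either by the paper's parametrization by $\alpha$ or by a count involving the actual successive minima of $\mathcal{L}$.
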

\begin{proof}
By Proposition~\ref{c}, we know that for $|r|\leq \frac{\hR |\vec{c}|}{\eta |g|}$
\[|I_{g,r}(\vec{c})|\ll_{F,\Omega}\hQ^d\left(\frac{\hQ|\vec{c}|}{|gr|}\right)^{-\frac{d-1}{2}}.\]
Using this, we obtain
\begin{eqnarray*}|E_1|&\ll& \hQ^d\sum_{\vec{c}\neq 0}^{\exc}\sum_{1\leq |r|\leq\frac{\hR |\vec{c}|}{\eta |g|}}|gr|^{-d}|S_{g,r}(\vec{c})|\left(\frac{\hQ|\vec{c}|}{|gr|}\right)^{-\frac{d-1}{2}}\\&=&\hQ^{\frac{d+1}{2}}\sum_{\vec{c}\neq 0}^{\exc}\left(\frac{|\vec{c}|}{|g|}\right)^{-\frac{d-1}{2}}\sum_{1\leq |r|\leq\frac{\hR |\vec{c}|}{\eta |g|}}|g|^{-d}|r|^{-\frac{d+1}{2}}|S_{g,r}(\vec{c})|.
\end{eqnarray*}
By Proposition~\ref{prop:Supperbound},
\[\sum_{1\leq |r|\leq\frac{\hR |\vec{c}|}{\eta |g|}}|g|^{-d}|r|^{-\frac{d+1}{2}}|S_{g,r}(\vec{c})|\ll_{\varepsilon,F,\Omega}|g|^{\varepsilon}\left(\frac{\hR |\vec{c}|}{\eta |g|}\right)^{1+\varepsilon}\ll_{\varepsilon,F,\Omega} |g|^{\varepsilon}\left(\frac{\hQ |\vec{c}|}{|g|}\right)^{1+\varepsilon}.\]
Here, we are also using the fact that $R$ and $Q$ are of the same order up to a constant depending on the quadratic form and $\Omega$. Consequently,
\[|E_1|\ll \hQ^{\frac{d+1}{2}}\sum_{\vec{c}\neq 0}^{\exc}\left(\frac{|\vec{c}|}{|g|}\right)^{-\frac{d-1}{2}}|g|^{\varepsilon}\left(\frac{\hQ |\vec{c}|}{|g|}\right)^{1+\varepsilon}=\hQ^{\frac{d+3}{2}+\varepsilon}|g|^{\frac{d-3}{2}+\varepsilon}\sum_{\vec{c}\neq 0}^{\exc}|\vec{c}|^{-\frac{d-3}{2}}.\]
By Lemma~\ref{lem:cform}, we may assume without loss of generality that $\vec{c}$ are all congruent to $\alpha A\boldsymbol{\lambda}$ modulo $g$ for some varying polynomial $\alpha$. By assumption, at least one coordinate of $\boldsymbol{\lambda}$ is relatively prime to $g$, say the first one. Since $\vec{c}$ is congruent to $\alpha A\boldsymbol{\lambda}\bmod g$ for some $\alpha$ depending on $\vec{c}$ and $\gcd(\Delta,g)=1$, the first coordinate varies through all polynomials modulo $g$ as $\vec{c}$ and so as $\alpha$ varies. Furthermore, since $\vec{c}$ is exceptional and $\hQ/\hR=O_{F,\Omega}(1)$, $|\vec{c}|\leq O_{F,\Omega}(1)|g|$. Consequently,
\[\sum_{\vec{c}\neq 0}^{\exc}|\vec{c}|^{-\frac{d-3}{2}+\varepsilon}\ll_{\varepsilon,F,\Omega} \sum_{0\neq|\alpha|<|g|}|\alpha|^{-\frac{d-3}{2}+\varepsilon}\ll_{\varepsilon,F,\Omega} 1+|g|^{-\frac{d-5}{2}+\varepsilon},\]
from which we obtain
\[|E_1|\ll_{\varepsilon,F,\Omega} \hQ^{\frac{d+3}{2}+\varepsilon}|g|^{\frac{d-3}{2}+\varepsilon}(1+|g|^{-\frac{d-5}{2}+\varepsilon}).\]
\end{proof}
Similarly, we have the same bound on $E_2$.
\begin{lemma}\label{lem:e2}
\[|E_2|\ll_{\varepsilon,F,\Omega} \hQ^{\frac{d+3}{2}+\varepsilon}|g|^{\frac{d-3}{2}+\varepsilon}(1+|g|^{-\frac{d-5}{2}+\varepsilon}).\]
\end{lemma}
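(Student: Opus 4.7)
The plan is to parallel the proof of Lemma~\ref{lem:e1}. A key wrinkle is that in the range defining $E_2$ the oscillatory bound of Proposition~\ref{c} degenerates to a constant multiple of the trivial estimate, since $\hQ|\vec{c}|/|gr|=O_{F,\Omega}(1)$ there. I would therefore begin by establishing the trivial bound
\[
|I_{g,r}(\vec{c})|\leq I_{g,r}(\vec{0})=\frac{\hQ}{|r|}\vol\bigl(\{|\vec{t}-\vec{t}_0|<\hR,\ |G(\vec{t})|<\hQ|r|\}\bigr)\ll_{F,\Omega}\hQ^d,
\]
valid for all $1\leq|r|\leq\hQ$; the first inequality is the triangle inequality, the volume is bounded by $\hR^{d-2}\hQ|r|$ because $|\nabla G|\asymp\hR$ on the ball, and one combines this with $\hR\asymp\hQ$.

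The key step is a dyadic decomposition of the $r$-sum rather than a single application of Proposition~\ref{prop:Supperbound} at $\hat X=\hQ$. For each dyadic scale $|r|=q^k$, the proposition taken with $\hat X=q^{k+1}$ yields
\[
\sum_{|r|=q^k}|g|^{-d}|r|^{-(d+1)/2}|S_{g,r}(\vec{c})|\ll_{F,\varepsilon}|g|^\varepsilon q^{k(1+\varepsilon)},
\]
and writing $|r|^{-d}=q^{-k(d-1)/2}|r|^{-(d+1)/2}$ converts this into
\[
\sum_{|r|=q^k}|gr|^{-d}|S_{g,r}(\vec{c})|\ll|g|^\varepsilon q^{k((3-d)/2+\varepsilon)}.
\]
For $d\geq 4$ the ratio $q^{(3-d)/2+\varepsilon}<1$, so summing over $q^k\in(M_{\vec{c}},\hQ]$, where $M_{\vec{c}}:=\hR|\vec{c}|/(\eta|g|)\asymp\hQ|\vec{c}|/|g|$, is a geometric series dominated by its smallest term:
\[
\sum_{M_{\vec{c}}<|r|\leq\hQ}|gr|^{-d}|S_{g,r}(\vec{c})|\ll|g|^\varepsilon M_{\vec{c}}^{(3-d)/2+\varepsilon}.
\]

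Combining this with $|I_{g,r}(\vec{c})|\ll\hQ^d$ and $M_{\vec{c}}\asymp\hQ|\vec{c}|/|g|$, the contribution to $|E_2|$ from each exceptional $\vec{c}$ is $\ll\hQ^{(d+3)/2+\varepsilon}|g|^{(d-3)/2+\varepsilon}|\vec{c}|^{-(d-3)/2+\varepsilon}$. Summing over exceptional $\vec{c}$ via the parametrization $\vec{c}\equiv\alpha A\boldsymbol{\lambda}\bmod g$ for $0\neq|\alpha|<|g|$, exactly as in the proof of Lemma~\ref{lem:e1}, yields $\sum^{\exc}_{\vec{c}\neq 0}|\vec{c}|^{-(d-3)/2+\varepsilon}\ll_{F,\varepsilon} 1+|g|^{-(d-5)/2+\varepsilon}$, completing the estimate. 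The main obstacle is recognizing the necessity of the dyadic decomposition: a crude application of Proposition~\ref{prop:Supperbound} at $\hat X=\hQ$, followed by pulling out $|r|^{-(d-1)/2}\leq M_{\vec{c}}^{-(d-1)/2}$, would produce the weaker bound $\hQ^{(d+3)/2+\varepsilon}|g|^{(d-1)/2+\varepsilon}$, losing the factor of $|g|$ that is recovered only because the dyadic geometric series concentrates at the lower endpoint $M_{\vec{c}}$ when $d\geq 4$.
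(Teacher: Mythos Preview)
Your proposal is correct and follows essentially the same approach as the paper: the trivial bound $|I_{g,r}(\vec{c})|\ll_{F,\Omega}\hQ^{d}$ (the paper writes $\hQ^{d+\varepsilon}$, but your volume argument via $|\nabla G|\asymp\hR\asymp\hQ$ is fine), then a dyadic decomposition $|r|=q^k$ with Proposition~\ref{prop:Supperbound} applied at each scale, summing the resulting geometric series (which for $d\geq 4$ is dominated by its lowest term $M_{\vec{c}}$), and finally the same parametrization $\vec{c}\equiv\alpha A\boldsymbol{\lambda}\bmod g$ to bound $\sum^{\exc}_{\vec{c}\neq 0}|\vec{c}|^{-(d-3)/2+\varepsilon}$. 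Your closing remark about why the dyadic step is essential (to save the factor of $|g|$) is a correct diagnosis of the key point.
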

\begin{proof}
Note that by our choice of $Q$ and $R$, for every $\varepsilon>0$ we have $\frac{\hR |\vec{c}|}{\eta |g|}\geq \hQ^{1-\varepsilon}$ for large $Q$ which is guaranteed when $\deg f=4\deg g+O_{\varepsilon,F,\Omega}(1)$. This follows from $\kappa=\frac{|\vec{c}|}{|g|}<\frac{\hQ}{\hR}=O_{F,\Omega}(1)$. The sum $E_2$ is over $|r|>\frac{\hR |\vec{c}|}{\eta |g|}$. The argument at the beginning of the proof of Lemma~\ref{lem:small} show that
\[|I_{g,r}(\vec{c})|\ll_{F,\Omega}\hQ^{d+\varepsilon}.\]
Using this, we obtain
\begin{eqnarray*}|E_2| &\ll_{\varepsilon,F,\Omega}& \hQ^{d+\varepsilon}\sum_{\vec{c}\neq 0}^{\exc}\sum_{\frac{\hR |\vec{c}|}{\eta |g|}< |r|\leq \hQ}|gr|^{-d}|S_{g,r}(\vec{c})|\\ &=&\hQ^{d+\varepsilon}\sum_{\vec{c}\neq 0}^{\exc}\sum_{\frac{\hR |\vec{c}|}{\eta |g|}< |r|\leq \hQ}|r|^{-\frac{d-1}{2}}|g|^{-d}|r|^{-\frac{d+1}{2}}|S_{g,r}(\vec{c})|\\ &=& \hQ^{d+\varepsilon}\sum_{\vec{c}\neq 0}^{\exc}\sum_{k=1+\log_q\frac{\hR |\vec{c}|}{\eta |g|}}^Q\left(q^k\right)^{-\frac{d-1}{2}}\sum_{|r|=q^k}|g|^{-d}|r|^{-\frac{d+1}{2}}|S_{g,r}(\vec{c})|.
\end{eqnarray*}
By Proposition~\ref{prop:Supperbound}, for each $k$, 
\[\sum_{|r|=q^k}|g|^{-d}|r|^{-\frac{d+1}{2}}|S_{g,r}(\vec{c})|\ll_{\varepsilon,F} |g|^{\varepsilon}(q^k)^{1+\varepsilon}.\]
Hence
\begin{eqnarray*}|E_2|&\ll_{\varepsilon,F,\Omega}& \hQ^{d+\varepsilon}|g|^{\varepsilon}\sum_{\vec{c}\neq 0}^{\exc}\sum_{k=1+\log_q\frac{\hR |\vec{c}|}{\eta |g|}}^Q\left(q^k\right)^{-\frac{d-3}{2}+\varepsilon}\\&\ll_{\varepsilon,F,\Omega}& \hQ^{d+\varepsilon}|g|^{\varepsilon}\sum_{\vec{c}\neq 0}^{\exc}\left(\frac{\hQ |\vec{c}|}{|g|}\right)^{-\frac{d-3}{2}+\varepsilon}\\&=&\hQ^{\frac{d+3}{2}+2\varepsilon}|g|^{\frac{d-3}{2}+\varepsilon}\sum_{\vec{c}\neq 0}^{\exc}|\vec{c}|^{-\frac{d-3}{2}+\varepsilon}.
\end{eqnarray*}
As before,
\[\sum_{\vec{c}\neq 0}^{\exc}|\vec{c}|^{-\frac{d-3}{2}+\varepsilon}\ll_{\varepsilon,F,\Omega} \sum_{0\neq|\alpha|<|g|}|\alpha|^{-\frac{d-3}{2}+\varepsilon}\ll_{\varepsilon,F,\Omega} 1+|g|^{-\frac{d-5}{2}+\varepsilon},\]
from which the conclusion follows.
\end{proof}
We are now ready to prove our main theorem. Note that from remark~\ref{optimality} this is optimal for $d\geq 5$.
\begin{proof}[Proof of the main theorem~\ref{mainthm}] Recall from~\eqref{newequu}
\begin{equation*}N(w,\boldsymbol{\lambda})=\frac{1}{|g|\hQ^2}\sum_{\substack{
r\in \cO\\
|r|\leq \hat Q\\
\text{$r$ monic}
} }\sum_{\vec{c}\in\mathcal{O}^d}|gr|^{-d}S_{g,r}(\vec{c})I_{g,r}(\vec{c}).\end{equation*}
By Lemma~\ref{o}, Lemma~\ref{lem:small}, and Proposition~\ref{prop:small}, we have
\begin{eqnarray*}N(w,\boldsymbol{\lambda})&=&\frac{1}{|g|\hQ^2}\sum_{\substack{r\in \cO\\|r|\leq \hat Q^{1-\varepsilon}\\ \text{$r$ monic}}}|gr|^{-d}S_{g,r}(\vec{0})I_{g,r}(\vec{0})+O_{\varepsilon, F, \Omega}\left(\frac{\hQ^{\frac{d+3}{2}+2\varepsilon}|g|^{\frac{d-3}{2}+\varepsilon}(1+|g|^{-\frac{d-5}{2}+\varepsilon})}{|g|\hQ^2}\right)\\&=&\frac{1}{|g|\hQ^2}\sum_{\substack{r\in \cO\\|r|\leq \hat Q^{1-\varepsilon}\\ \text{$r$ monic}}}|gr|^{-d}S_{g,r}(\vec{0})I_{g,r}(\vec{0})+O_{\varepsilon, F, \Omega}\left(\hQ^{\frac{d-1}{2}+2\varepsilon}|g|^{\frac{d-5}{2}+\varepsilon}(1+|g|^{-\frac{d-5}{2}+\varepsilon})\right).\end{eqnarray*}
By Proposition~\ref{lem:Izerobound}, $I_{g,r}(\vec{0})=\sigma_{\infty}\hQ^d$ for some constant $\sigma_{\infty}>0$ and $\hQ$ sufficiently large depending on $\varepsilon,F,\Omega$. Hence for such $\hQ$,
\[\frac{1}{|g|\hQ^2}\sum_{\substack{r\in \cO\\|r|\leq \hat Q^{1-\varepsilon}\\ \text{$r$ monic}}}|gr|^{-d}S_{g,r}(\vec{0})I_{g,r}(\vec{0})=\frac{\sigma_{\infty}\hQ^{d-2}}{|g|}\sum_{\substack{r\in \cO\\|r|\leq \hat Q^{1-\varepsilon}\\ \text{$r$ monic}}}|gr|^{-d}S_{g,r}(\vec{0}).\]
On the other hand, by Lemma~\ref{lem:estimate} and Lemma~\ref{lem:densitysum}, 
\[\sum_{\substack{r\in \cO\\|r|\leq \hat Q^{1-\varepsilon}\\ \text{$r$ monic}}}|gr|^{-d}S_{g,r}(\vec{0})=\prod_{\varpi}\sigma_{\varpi}+O_{\varepsilon,F}\left(|g|^{\varepsilon}\hQ^{-\frac{d-3}{2}+2\varepsilon}\right).\]
As a result, we finally obtain
\begin{eqnarray*}N(w,\boldsymbol{\lambda})&=&\frac{\sigma_{\infty}\hQ^{d-2}}{|g|}\left(\prod_{\varpi}\sigma_{\varpi}+O_{\varepsilon,F}\left(|g|^{\varepsilon}\hQ^{-\frac{d-3}{2}+2\varepsilon}\right)\right)+O_{\varepsilon,F,\Omega}\left(\hQ^{\frac{d-1}{2}+2\varepsilon}|g|^{\frac{d-5}{2}+\varepsilon}(1+|g|^{-\frac{d-5}{2}+\varepsilon})\right)\\&=&\frac{\sigma_{\infty}\hQ^{d-2}}{|g|}\prod_{\varpi}\sigma_{\varpi}+O_{\varepsilon,F,\Omega}\left(\hQ^{\frac{d-1}{2}+2\varepsilon}|g|^{\frac{d-5}{2}+\varepsilon}(1+|g|^{-\frac{d-5}{2}+\varepsilon})\right)\\&=& \frac{\sigma_{\infty}\hQ^{d-2}}{|g|}\prod_{\varpi}\sigma_{\varpi}\left(1+O_{\varepsilon,F,\Omega}\left(\frac{|f|^{\varepsilon}\hQ^{\frac{d-1}{2}+2\varepsilon}|g|^{\frac{d-3}{2}+\varepsilon}(1+|g|^{-\frac{d-5}{2}+\varepsilon})}{\hQ^{d-2}}\right)\right)\\&=&\frac{\sigma_{\infty}\hQ^{d-2}}{|g|}\prod_{\varpi}\sigma_{\varpi}\left(1+O_{\varepsilon,F,\Omega}\left(\frac{|g|^{d-3-\varepsilon}(1+|g|^{-\frac{d-5}{2}+\varepsilon})}{|f|^{\frac{d-3}{4}-2\varepsilon}}\right)\right)\\&=& \frac{\sigma_{\infty}\hQ^{d-2}}{|g|}\prod_{\varpi}\sigma_{\varpi}\left(1+O_{\varepsilon,F,\Omega}\left(|g|^{7\varepsilon}(1+|g|^{-\frac{d-5}{2}+\varepsilon})\left(\frac{|g|^{4}}{|f|}\right)^{\frac{d-3}{4}-2\varepsilon}\right)\right).\end{eqnarray*}
Note that in the third equality, we have also used Lemma~\ref{lem:densitysum} ensuring that the product of the local densities is $\gg |f|^{-\varepsilon}$. Note that $\varepsilon>0$ is arbitrary. From the above, it follows that the conclusion of Theorem~\ref{mainthm} holds for $d\geq 5$ and $|f|\gg_{\varepsilon,F,\Omega} |g|^{4+\varepsilon}$ with a potential modification of $\varepsilon$ if necessary. If $d=4$, we can take $|f|\gg_{\varepsilon,F,\Omega} |g|^{6+\varepsilon}$.
\end{proof}
\begin{proof}[Proof of Theorem~\ref{infmainthm}]
Recall the statement and notation of Theorem~\ref{infmainthm}. We prove this theorem by reducing it to Theorem~\ref{mainthm}. Consider the equation $F(\vec{x})=\vec{x}^{\intercal}A\vec{x}=f$. We may assume without loss of generality that $a_{11}(t)$ has the maximal degree $d_0=\max\deg a_{ij}(t)$ satisfying $\gcd(a_{11},t)=1$, and that $a_{11}$ has the same quadratic residue as $f$. Write $\tilde{A}(t):=t^{d_0}[a_{ij}(1/t)]_{ij}$. Suppose that $\Omega$ has parameters $\omega$ and $\omega'$. Let $2c'\geq\omega'$ be an even integer. Writing $f=\alpha_fu^2$, where $\alpha_f$ of degree at most $1$ is the representative of $f$ in $\Ki^{*}/\Ki^{*2}$, changing $t$ to $1/t$ in $F(\vec{x})=f$, and multiplying by $t^{2c'+\deg f+d_0}$, we obtain
\[\left(t^{c'+\deg u}\vec{x}(1/t)\right)^{\intercal}\left(t^{\deg\alpha_f}\tilde{A}(t)\right)\left(t^{c'+\deg u}\vec{x}(1/t)\right)=t^{2c'+d_0+\deg f}f(1/t).\]
Consider the system 
\begin{equation}\label{system}\begin{cases}\vec{y}(t)^{\intercal}\left(t^{\deg\alpha_f}\tilde{A}(t)\right)\vec{y}(t)=t^{2c'+d_0+\deg f}f(1/t),\\ \vec{y}(t)\equiv t^{c'+\deg u}\boldsymbol{\lambda}(1/t)\bmod t^N.\end{cases}\end{equation}
Since $\boldsymbol{\lambda}\in\Omega$, condition~\eqref{prop3} of anisotropic cones implies that
\[|f|=|F(\boldsymbol{\lambda})|\geq \frac{|\boldsymbol{\lambda}|^2}{\hat{\omega'}},\]
from which we obtain $c'+\deg u\geq\deg\boldsymbol{\lambda}$. From this, $t^{c'+\deg u}\boldsymbol{\lambda}(1/t)$ is a power series in $t$, and so is a polynomial modulo $t^N$.\\
\\
As in Example ~\ref{exampleaniso}, we have the anisotropic cone 
\[\tilde{\Omega}=\left\{\vec{y}=(y_1,\hdots,y_d)\in\Ki^d:\forall i>1, \deg y_i<\deg y_1\right\}\]
for the quadratic form $\vec{y}(t)^{\intercal}\left(t^{\deg\alpha_f}\tilde{A}(t)\right)\vec{y}(t)$. Since 
\[t^{2c'+d_0+\deg f}f(1/t)/(t^{\deg\alpha_f}t^{d_0}a_{11}(1/t))=t^{2c'+2\deg u}f(1/t)/a_{11}(1/t)\]
is a square, the equation 
\[\vec{y}(t)^{\intercal}\left(t^{\deg\alpha_f}\tilde{A}(t)\right)\vec{y}(t)=t^{2c'+d_0+\deg f}f(1/t)\] has a solution in $\tilde{\Omega}$. Applying Theorem~\ref{mainthm} to $\tilde{\Omega}$ and the system \eqref{system}, we obtain that there is a solution $\tilde{\vec{x}}(t)\in\mathcal{O}^d\cap\tilde{\Omega}$ if $\deg f\geq (4+\varepsilon)N+O_{\varepsilon,F,\tilde{\Omega}}(1)$ when $d\geq 5$, and if $\deg f\geq (6+\varepsilon)N+O_{\varepsilon,F,\tilde{\Omega}}(1)$ when $d=4$. Since
\begin{equation}\label{degreecounting}\tilde{\vec{x}}(t)^{\intercal}\left(t^{\deg\alpha_f}\tilde{A}(t)\right)\tilde{\vec{x}}(t)=t^{2c'+d_0+\deg f}f(1/t),\end{equation}
replacing $t$ with $1/t$ we have
\[\left(t^{c'+\deg u}\tilde{\vec{x}}(1/t)\right)^{\intercal}A(t)\left(t^{c'+\deg u}\tilde{\vec{x}}(1/t)\right)=f(t).\]
We first show that $\vec{x}(t):=t^{c'+\deg u}\tilde{\vec{x}}(1/t)\in\mathcal{O}^d$. Since $\tilde{\vec{x}}(t)\in\tilde{\Omega}$, we have by computing degrees (in $t$) in equation~\eqref{degreecounting} that
\[\deg\tilde{A}(t)+\deg\tilde{\vec{x}}(t)\leq c'+\deg u+d_0.\]
Since $\gcd(a_{11},t)=1$, we have $\deg\tilde{A}(t)=\deg A=d_0$, from which it follows that $\deg\tilde{\vec{x}}(t)\leq c'+\deg u$. From this, we obtain $\vec{x}(t)\in\mathcal{O}^d$.\\
\\
Note that since $\tilde{\vec{x}}(t)\equiv t^{c'+\deg u}\boldsymbol{\lambda}(1/t)\bmod t^N$, 
\[\left|\tilde{\vec{x}}(1/t)-t^{-c'-\deg u}\boldsymbol{\lambda}(t)\right|\leq q^{-N},\]
that is,
\[\left|\vec{x}-\boldsymbol{\lambda}\right|\leq q^{-N+c'+\deg u}\leq |f|^{1/2}q^{-N+c'}.\]
The conclusion now follows as $N\geq 0$ is an arbitrary constant and $c'$ depends only on $\Omega$.

\end{proof}
\textit{Acknowledgment.} N.T. Sardari's work is supported partially by the National Science Foundation under Grant No. DMS-2015305 and is grateful to Max Planck Institute for Mathematics in Bonn and the Institute
For Advanced Study for their hospitality and financial support. During the writing of this project, M. Zargar was supported by SFB1085: Higher invariants at the University of Regensburg, the Max Planck Institute for Mathematics in Bonn, and the University of Southern California.
\bibliographystyle{alpha}
\bibliography{part1}
\end{document}